\newtheorem{prop}{Proposition}[section]
\newtheorem{lemma}{Lemma}[section]
\newtheorem{thm}{Theorem}[section]
\newtheorem{ex}{Example}[section]
\newtheorem{coro}{Corollary}[section]
\theoremstyle{definition}
\newtheorem{defi}{Definition}[section]
\newtheorem{rem}{Remark}[section]
\def\R{{\mathbb R}}
\def\N{{\mathbb N}}
\def\F{{\mathcal F}}
\def\G{{\mathcal G}}
\begin{document}
\title
{Relative pressure functions and their equilibrium states} %and applications
%\date{today}
%\begin{thanks}
%{The author was partly supported by CONICYT PIA ACT172001 and by  
%196108 GI/C at the Universidad del B\'{i}o-B\'{i}o.}
%This research was carried out while GI 172208/C  at Universidad del B\'{i}o-B\'{i}0} was in ejction
%\end{thanks}
\subjclass[2000]{37D35, 37B10}
\keywords{Relative pressure, Equilibrium states, Weak Gibbs measures, Compensation functions, Thermodynamic formalism, Subadditive potentials, Asymptotically additive potentials}

\author{Yuki Yayama}
\address{Centro de Ciencias Exactas and Grupo de investigaci\'{o}n en Sistemas Din\'{a}micos y Aplicaciones-GISDA, Departamento de Ciencias B\'{a}sicas,  Universidad del B\'{i}o-B\'{i}o, Avenida Andr\'{e}s Bello 720, Casilla 447, Chill\'{a}n, Chile}
\email{yyayama@ubiobio.cl}

\begin{abstract}
For a subshift $(X, \sigma_X)$ and a subadditive sequence 
$\F=\{\log f_n\}_{n=1}^{\infty}$ on $X$,  we study equivalent conditions for the existence of $h\in C(X)$ such that $\lim_{n\rightarrow\infty}(1/{n})\int \log f_n d \mu=\int h d \mu$ for every invariant measure $\mu$ on $X$. 
For this purpose, we first we study necessary and sufficient conditions 
for  $\F$ to be an asymptotically additive sequence in terms of certain properties for periodic points. 
For a factor map $\pi: X\rightarrow Y$, where $(X, \sigma_X)$ is an irreducible shift of finite type and  $(Y, \sigma_Y)$ is a subshift,
applying our results and the results obtained by Cuneo \cite{Cu} on asymptotically additive sequences,  we study the existence of $h$ with regard to a subadditive sequence associated to a relative pressure function. 
This leads to a characterization of the existence of a certain type of continuous compensation function for a factor map between subshifts. 
 As an application, we study 
 the projection $\pi\mu$ of an invariant weak Gibbs measure $\mu$ for a continuous function on an irreducible shift of finite type. 
 \end{abstract}

%\begin{abstract}
%For a subadditive sequence of continuous functions $\F$ on a subshift on finitely many symbols,  we study necessary and sufficient conditions 
%for  $\F$ to be an asymptotically additive sequence. 
%Let $\pi: X\rightarrow Y$ be a factor map between subshifts $(X, \sigma_X)$ and $(Y, \sigma_Y)$.
%Applying our results and the result of Cuneo \cite{Cu} on asymptotically additive sequences,  we study equivalent conditions for a relative pressure function $P(\sigma_X, \pi, f)$ of $f\in C(X)$ on $Y$ 
%to have a function $h\in C(Y)$ such that 
%$\int P(\sigma_X, \pi, f)d \mu=\int h d\mu$ for every invariant measure $\mu$ on $Y$. This lead to a characterization of the existence of a certain type of continuous compensation function for factor maps between subshifts. 
% As an application, we study the projection $\pi\mu$ of an invariant weak Gibbs measure 
 %$\mu$ for a continuous function on an irreducible shift of finite type $X$. We give necessary %and sufficient conditions for the measure $\pi\mu$ to be a weak Gibbs measure for some %continuous function.
%\end{abstract}
\maketitle
%\pagestyle{myheadings}
%\markright{Equilibrium states of certain subadditive sequences as equilibrium states of continuous functions and its applications}
  
\section{Introduction}\label{intro}
The thermodynamic formalism for sequences of continuous functions generalizes 
the formalism  for continuous functions   and it has been applied to solve some dimension problems in nonconformal dynamical systems. %, problems related to relative pressure and factors of invariant measures. 
The equilibrium states for sequences of continuous functions are the equilibrium states  for Borel measurable functions in general.
%Thermodynamic formalisms for sequences of continuous functions have  been developed corresponding to the study on dimension problems in dynamical systems. 
%They generalize thermodynamic formalisms for continuous functions and equilibrium states for sequences of continuous functions are those for Borel measurable functions in general. 
%Thermodynamic formalisms for sequences of continuous functions generalize those for continuous functions and in general equilibrium states for sequences of continuous functions are equilibrium states for Borel measurable functions. 
In \cite{Fa} Falconer introduced the thermodynamic formalism for subadditive sequences to study repellers of nonconformal transformations.
Cao, Feng and Huang in \cite{CFH} established the theory for subadditive sequences wherein the variational principle was obtained for compact dynamical systems.
Asymptotically additive sequences,  which generalize almost additive sequences studied by Barreira \cite{b2} and Mummert \cite {m}, were also introduced by Feng and Huang \cite{FH}.
The properties of equilibrium states for sequences of continuous functions, such as uniqueness, the (generalized) Gibbs property and  mixing properties have been also  studied (see for example, \cite{b2, m, Fe1}).  
Here, a natural question arises. %A natural question arises here: 

\smallskip
\paragraph*{\underline{Question 1}} Given a subadditive sequence 
$\F=\{\log f_n\}_{n=1}^{\infty}$ on a compact metric space $X$, what  are necessary and sufficient conditions for the existence of a continuous function $h$ on $X$  such that 
\begin{equation}\label{question}
\lim_{n\rightarrow\infty}\frac{1}{n}\int \log f_n d \mu=\int h d \mu
\end{equation}
 for every invariant measure $\mu$ on $X$? 
 
 \smallskip
If such an $h$ exists, then the thermodynamic formalism for such sequences $\F$ reduces to the formalism for continuous functions. Cuneo \cite[Theorem 1.2]{Cu} %studied asymptotically additive sequence, which are more general than almost additive sequences, and 
 proved that if a sequence of continuous functions is %$\F=\{\log f_n\}_{n=1}^{\infty}$ 
 asymptotically additive (see (\ref{defasi}) for the definition), then there always exists $h\in C(X)$ satisfying (\ref{question}) for every invariant measure $\mu$ on $X$.  
%In this paper, we study a necessary  condition for a subadditive sequence $\F$ on an irreducible subshifts 
 %(under certain conditions)
 %which belongs to a particular class of subadditive sequences 
 %to satisfy (\ref{question}) for some $h\in C(X)$ for every invariant measure $\mu$ on $X$
%In this paper, given a subadditive sequence $\F$ on an irreducible subshift, we study necessary conditions for the existence of $h\in C(X)$ satisfying (\ref{question})  for every invariant measure $\mu$ on $X$
In this paper, we study necessary conditions for a subadditive sequence $\F$ on an irreducible subshift $(X, \sigma_X)$
 %(under certain conditions)
 %which belongs to a particular class of subadditive sequences 
 %to satisfy (\ref{question}) for 
 to have a continuous function $h \in C(X)$ satisfying (\ref{question})  for every invariant measure $\mu$ on $X$. Using our results and the result obtained by Cuneo \cite[Theorem 1.2]{Cu}, we give some answers to Question 1 (Theorems \ref{mainaa},   \ref{kohiyo} and \ref{weakgibbs}).
 %Question 1 is also studied in Proposition \ref{main11} in a general form.
%In this paper, we study necessary conditions on a subadditive sequence $\F$ on an irreducible subshift 
 %(under certain conditions)
 %which belongs to a particular class of subadditive sequences 
 %to satisfy (\ref{question}) for 
 %for the existence of $h\in C(X)$ satisfying (\ref{question})  for every invariant measure $%\mu$ on $X$
 %(Proposition \ref{main11}, Theorems \ref{mainaa}, \ref{mainas}, \ref{kohiyo} and \ref{weakgibbs}). % Note that subadditive sequences are not asymptotically additive in general (see Example \ref{exshin}). 
 Towards this end, we first study conditions for a subadditive sequence on a subshift to be an asymptotically additive sequence in terms of certain properties for periodic points. %(Theorems \ref{mainaa} and \ref{mainas}). %Given a subadditive sequence $\F=\{\log f_n\}_{n=1}^{\infty}$, if
 %$\F$ satisfies a particular property at certain periodic points, then $\G$ converges to $0$ everywhere, moreover, converges uniformly to $0$. This gives the asymptotic additivity of $\F$ and the existence of $h$ in .% Since $\G$ converges pointwise to $0$ everywhere 
Given a subadditive sequence $\F=\{\log f_n\}_{n=1}^{\infty}$ on $X$, %satisfying
%$\liminf_{n\rightarrow\infty}({1}/{n}) \log f_n (x)>-\infty$ for each $x\in X$,
if (\ref{question}) holds for every invariant Borel probability measure $\mu$ on $X$, then the sequence
 $\tilde\F=\{(1/n)\log (f_n/e^{S_nh})\}_{n=1}^{\infty}$ converges (pointwise) to the zero function $0$ for every periodic point of $\sigma_X$ (see  Proposition \ref{main11}). We show in Theorems \ref{mainaa} and \ref{mainas} that if the sequence
 $\tilde\F$ converges (pointwise) to $0$ for every periodic point of $\sigma_X$
and $\F$ satisfies a particular property for certain periodic points then $\tilde \F$ converges to $0$ everywhere, moreover, it converges uniformly to $0$ on $X$. This gives the asymptotic additivity of $\F$. 
We apply Theorem \ref{mainaa}
when we study Question 1 with regard to a relative pressure function of a continuous function (Theorems \ref{kohiyo} and \ref{weakgibbs}).  In Proposition \ref{main11}, Question 1 is studied in a general form.
% We prove that the subadditive sequence $\F$ associated with the relative pressure function satisfies the particular property at certain periodic points in Theorem \ref{mainaa} (ii) to obtain the theorems.%Theorem \ref{mainaa} is applied to study Question 1 with regard to a relative pressure function of a continuous function $f\in C(X)$ in Section \ref{relation} (Theorem \ref{kohiyo}). %A relative pressure function is 
%the limit of a subadditive sequence of functions almost everywhere with respect to each $\mu\in M(Y, \sigma_Y)$ and we prove in Theorem \ref{kohiyo} that the sequence $\F$ associated to a relative pressure function satisfies the particular property at periodic points. 
%In Section c, we study Question 1 with regard to a relative pressure function of a continuous function $f\in C(X)$. In Theorem \ref{kohiyo} we apply Theorem \ref{mainaa}
% that a sequence $\F$ associated to a relative pressure function satisfies the particular property if $\G$ converges to $0$ at every periodic point and apply Theorem \ref{mainaa}  %Hence in Section \ref{relation}, we study Question 1 with regard to a relative pressure function of a continuous function $f\in C(X)$. %To show Theorem  \ref{kohiyo}, we study properties of the pressure function 
 Note that subadditive sequences are not asymptotically additive in general (see Example \ref{exshin} in Section  \ref{apli}).

 %Note that subadditive sequences are not asymptotically additive in general (see Example \ref{exshin}).  
 %In Section \ref{subas}, we study conditions for  a subadditive sequence on a subshift to be an asymptotically additive sequence (Theorems \ref{mainaa} and \ref{mainas}).
 %Subadditive sequences are not asymptotically additive in general. 
 
In Section \ref{relation}, we consider relative pressure functions in relation to compensation functions. %more general subadditive sequences than those from Section \ref{prep}. In particular we consider a sequence of continuous functions satisfying tempered variation and a property weaker than \ref{a1} (see \ref{a4}).
%These types of sequences appear in the theory of relative pressure.
 Let $(X, \sigma_X), (Y, \sigma_Y)$ be subshifts and  
$\pi:X\rightarrow Y$ be a factor map.
Let $f\in C(X)$, $n\in \N$ and $\delta>0$. %A subset $E$ of $X$ is an {\em $(n, \delta)$ separated subset} of 
%$X$ if $\max_{0\leq i\leq n-1}d({\sigma^{i}_X}(x), {\sigma^{i}_X}(y))>\delta$ for all $x, y \in E, x\neq y$. 
For each $y\in Y$, define
$$P_n(\sigma_X, \pi, f, \delta)(y)=\sup\{\sum_{x\in E}e^{(S_nf)(x)}:E \text { is an } (n, \delta) \text{ separated subset of } \pi^{-1}(\{y\})\},$$
$$P(\sigma_X, \pi, f, \delta)(y)=\limsup_{n\rightarrow \infty}\frac{1}{n}\log P_{n}(\sigma_X, \pi, f,\delta)(y),$$
$$P(\sigma_X, \pi, f)(y)=\lim_{\delta\rightarrow 0}P(\sigma_X, \pi,f, \delta)(y).$$
The function $P(\sigma_X, \pi, f):Y\rightarrow \R$ is  the {\em relative pressure} function of $f\in C(X)$ with respect to $(\sigma_X, \sigma_Y, \pi)$. In general it is merely Borel measurable. 
%$(X, \sigma_X)$ is an irreducible shift of finite type.
In Theorem \ref{kohiyo}, for an irreducible shift of finite type $(X, \sigma_X)$, we study equivalent conditions for a relative pressure function $P(\sigma_X, \pi, f)$ on $Y$ 
to have a function $h\in C(Y)$ such that 
\begin{equation}\label{mainq}
\int P(\sigma_X, \pi, f)d \mu=\int h d\mu \textnormal{ for every } \mu\in M(Y, \sigma_Y)\end{equation}
 where $M(Y, \sigma_Y)$ is the set of invariant Borel probability measures on $Y$. In general, a relative pressure function $P(\sigma_X, \pi, f)$ is represented by a subadditive sequence $\G=\{\log g_n\}_{n=1}^{\infty}$ of continuous functions on $Y$ (see (\ref{beq}) for $g_n$), that is, $P(\sigma_X, \pi, f) =\lim_{n\rightarrow\infty}(1/n)\log g_n$ 
%the limit of a subadditive sequence $\G=\{\log g_n\}_{n=1}^{\infty}$ of continuous functions
almost everywhere with respect to every $\mu \in M(Y, \sigma_Y)$. The sequence $\G$ satisfies an additional condition (see \ref{a4} in Section \ref{seqmany}) weaker than almost additivity and it is not asymptotically additive in general.  %$\mu\in M(Y, \sigma_Y)$. %we study Question 1 with regard to a relative pressure function 
%$P(\sigma_X, \pi, f)$ (Theorems \ref{kohiyo} and \ref{weakgibbs}). 
We prove that the subadditive sequence $\G$ on $Y$ associated to $P(\sigma_X, \pi, f)$ satisfies the particular property for certain periodic points in Lemma \ref{mainaa} \ref{ms2}.  Applying Theorem \ref{mainaa}  we obtain in Theorem \ref{kohiyo} that,
for $h\in C(Y)$, uniform convergence of $\tilde \G=\{(1/n)\log (g_n/e^{S_nh})\}_{n=1}^{\infty}$ to 0 on $Y$ is equivalent to pointwise convergence of $\tilde \G$ to $0$ for every periodic point of $\sigma_Y$. In particular, we obtain that (\ref{mainq}) holds if and only if the sequence $\G$ associated to $P(\sigma_X, \pi, f)$ is asymptotically additive. 
Moreover, if there exists an invariant weak Gibbs measure $m$ for $f\in C(X)$, then 
(\ref{mainq}) holds if and only if $\pi m$ is an invariant weak Gibbs measure for some continuous function on $Y$ (Theorem \ref{weakgibbs}). The properties of the sequence $\G$ associated to $P(\sigma_X, \pi, f)$ under the existence of $h$ in (\ref{mainq}) are studied and a condition of nonexistence of such a continuous function is also studied (Corollary \ref{chara1}). 
These results are applied directly to study the 
projection of an invariant weak Gibbs measure for a continuous function on $X$ in Section \ref{apli} (see Theorem \ref{general} and Corollary \ref{cc1}). Note that in general if there exists an invariant weak Gibbs measure $m$ for $f\in C(X)$, then $\pi m$ is a weak Gibbs equilibrium state for the subadditive sequence $\G$ associated to $P(\sigma_X, \pi, f)$.

% using the fact that if $m$ is an invariant weak Gibbs measure for $f\in C(X)$, then the projection $\pi\mu$ is an invariant weak Gibbs measure for the sequence associated to  $P(\sigma_X, \pi, f)$ (Theorem \ref{general}, Corollaries \ref{generalG} and \ref{cc1}). 
On the other hand, relative pressure functions are connected with compensation functions. Given $f\in C(X)$, Theorem \ref{kohiyo} relates the question on the existence of $h$ in (\ref{mainq}) with the existence of a compensation function $f-h\circ\pi$ for some $h\in C(Y)$.
A function $F\in C(X)$ is a \em{compensation function} for a factor map $\pi$ if 
\begin{equation}
\sup_{\mu\in M(X, \sigma_X)}\{h_{\mu}(\sigma_X)+\int F d\mu+\int \phi \circ \pi d\mu\}=\sup_{\nu\in M(Y, \sigma_Y)}\{h_{\nu}(\sigma_Y)+\int \phi d\nu\}
\end{equation}
for every $\phi\in C(Y)$. % where $M(X, \sigma_X), M(Y, \sigma_Y)$ are the sets of invariant Borel probability measures on $X$ and $Y$ respectively. 
If $F=G\circ \pi$, $G\in C(Y)$, then $G\circ \pi$ is a \em{saturated compensation function}. 
%$P_X(\phi\circ \pi+G)=P_Y(\phi)$ for every $\phi\in C(Y)$, where $P_X(\cdot), P_Y(\cdot)$ are the topological pressure on $X$ and topological pressure on $Y$ respectively. If $G=F\circ \pi$ where $\F\in C(X)$, then $G\in C(X)$ is a saturated compensation function. 
The concept of compensation functions was introduced by Boyle and Tuncel \cite{BT} and their properties  were studied by Walters \cite{Wa} in relation to relative pressure. 
The existence of compensation functions %under certain additional conditions 
has been studied \cite{A, S1, Sh2, Sh3}. Shin \cite{Sh2, Sh3} proved a saturated compensation function does not always exist  and gave a characterization for the existence of a saturated compensation function for factor maps between shifts of finite type. 
%Since a function $-h\circ \pi\in C(X)$ is a saturated compensation function if and only if (\ref{mainq}) holds for $f=0$, our results generalize the result obtained by Shin \cite{Sh3} (see Corollary \ref{A1}). 
A function $-h\circ \pi\in C(X)$ is a saturated compensation function if and only if (\ref{mainq}) holds for $f=0$. Our results connect the result obtained by Shin 
with the asymptotic additivity of the sequence associated to $P(\sigma_X, \pi, 0)$(see Remark \ref{comments} and Corollary \ref{A1}).
%also give some answers  to the question on the existence of a saturated compensation function (see Corollary \ref{A1}). %, in a slightly different form (see Section \ref{apli}). % generalize some of the results obtained by Shin \cite{Sh3} (see Corollary \ref{A1}).  Shin \cite{Sh3} studied a particular property at periodic points by using periodic meausures. 
Since saturated compensation functions were applied to study the measures of full Hausdorff dimension of nonconformal repellers,  studying the properties of equilibrium states for $h$ in (\ref{mainq}) would help in the further study of certain dimension problems (see Example \ref{yukithesis}). 

Section \ref{prep} deals with a particular class of  subadditive sequences on subshifts 
 satisfying an additional property  (see condition \ref{a1}) weaker than almost additivity but stronger than \ref{a4}. %For example, if $f\in C(X)$ is in the Bowen class, then the associated sequence with $P(\sigma_X, \pi, f)$ belongs to this class. 
 The result of Feng \cite[Theorem 5.5]{Fe1} implies
that there is a unique (generalized) Gibbs equilibrium state for a subadditive sequence with bounded variation satisfying  the property \ref{a1}. 
We study equivalent conditions for this type of sequence $\F=\{\log f_n\}_{n=1}^{\infty}$ on a subshift $X$ to have a continuous function for which the unique Gibbs equilibrium state is a weak Gibbs measure (Theorem \ref{wk=g}). In this case, 
we obtain that for $h\in C(X)$ uniform convergence of  the sequence of functions $\{(1/n)\log (f_n/e^{S_nh})\}_{n=1}^{\infty}$ to 0 on $X$ is equivalent to pointwise convergence  of  the sequence  to 0 on $X$. %Some properties of the unique Gibbs equilibrium state under the existence of such an $h$ are also studied in Theorem \ref{cc}.
 We note that it is not clear that  the condition for certain periodic points in Theorem \ref{mainas} \ref{mss2} is satisfied for this type of sequence in general. %In Theorem \ref{cc}, we give a condition that guarantees the non existence of a continuous function for which the unique Gibbs equilibrium state for $\F$ is a weak Gibbs measure.

%For the case when $\mu$ is a Gibbs measure for a continuous function on an irreducible shift of finite type, we give  
%an equivalent condition for $\pi\mu$ to be a weak Gibbs measure for some $h\in C(Y)$ (Corollary \ref{generalG}).   %We remark that Pfister and Sullivan \cite{PfS} studied a class of continuous functions which have weak Gibbs equilibrium states. The continuous functions $h$ found in this paper which have weak Gibbs equilibrium states are not always in the class studied in \cite{PfS} (see examples in Section \ref{apli}).

\section{Background}
\subsection{Shift spaces}
We give a brief summary of the basic definitions in symbolic dynamics. %For notation and terminology not explained 
%here, refer  \cite{LM}.  
$(X, \sigma_X)$ is a {\em one-sided subshift} if $X$ is a closed
shift-invariant subset of $\{1,\dots, k\}^{\N}$ for some $k\geq
1$, i.e., $\sigma_X(X)\subseteq X$,  where the shift 
$\sigma_X:X\rightarrow X$
is defined by $(\sigma_X(x))_{i}=x_{i+1}$ for all $i\in \N$, $x=(x_n)^{\infty}_{n=1} \in X.$
Define a metric $d$ on $X$ by $d(x,x')={1}/{2^{k}}$ if
$x_i=x'_i$ for all $1\leq i\leq k$ and $x_{k+1}\neq {x'}_{k+1}$, $d(x,x')=1$ if $x_1\neq x'_1$,
and $d(x,x')=0$ otherwise. Throughout this paper, we consider one-sided subshifts.
Define a cylinder set $[x_1 \dots x_{n}]$ of length $n$ in $X$  by 
$[x_1\dots x_n]=\{(z_i)_{i=1}^{\infty} \in X:  z_i=x_i \text{ for all }1\leq i\leq n\}.$  For each $n \in \N,$ denote by
$B_n(X)$ the set of all $n$-blocks that appear in points in $X$.
Define $B_{0}(X)=\{\epsilon\},$ where  $\epsilon$ is the empty word of length $0$.
The language of $X$ is the set $B(X)=\cup_{n=0}^{\infty}B_n(X)$. A subshift $(X,\sigma_X)$ is {\it irreducible} if for any allowable words $u, v\in B(X)$, there exists  $w\in B(X)$ such that $uwv \in B(X)$.  A subshift has the {\it weak specification property} if there exists $p\in\N$ such that for any allowable words $u, v \in B(X)$, there exist  $0\leq k\leq p$ and $w\in B_{k}(X)$ such that $uwv\in B(X)$. We call such $p$ a weak specification number. 
A point $x\in X$ is a periodic point  of $\sigma_X$ if there exists $l\in\N$ such that $\sigma_X^{l}(x)=x$.
%It has the {\it strong specification property} if for each $u,v$ there exist $p\in\N$ and $w\in B_{p}(X)$ such that $uwv\in B(X)$.

Let $(X, \sigma_X)$ and $(Y, \sigma_Y)$ be subshifts. 
A shift of finite type $(X,\sigma_X)$ is {\em one-step} if there exists a set $F$ of forbidden blocks of length $\leq 2$ such 
that $X=\{x\in \{1, \dots, k\}^{\N}: \omega \textnormal{ does not appear in } x \textnormal{ for any }\omega\in F\}$. 
%A {\it one-block code}  is a map $\pi:X\rightarrow Y$ for which there exists a function, denoted again by $\pi$, $\pi:B_1(X)\rightarrow B_1(Y)$ such that 
%$(\pi(x))_i=\pi(x_i)$ for all $i\in\N$, $x=({x}_i)_{i=1}^{\infty}\in {X}$. For $u=x_1 \dots  x_k \in B_k(X)$, $k\in\N$, we denote $\pi({x}_1)\dots \pi({ x}_k)\in B_k(X)$ by $\pi(u)$. 
A map
$\pi:X\rightarrow Y$ is a {\it factor map} if it is continuous, surjective and satisfies $\pi \circ \sigma_{X} = \sigma_Y\circ \pi$. %If a factor map $\pi$ is a one-block code, then $\pi$ is a {\it one-block factor map}.Let $(X, \sigma_X)$ and $(Y, \sigma_Y)$ be subshifts and $\pi:X\rightarrow Y$ be a factor map. 
%{\em A factor map} $\pi:X \rightarrow
%Y$ is a continuous subjective function from $X$ to $Y$ that satisfies $\pi\circ \sigma_X=\sigma_{Y}\circ \pi$. 
If in addition, the $i$-th position of the image of
$x$ under $\pi$ depends only on $x_i,$ then $\pi$ is a {\em one-block
factor map}. %For $u=x_1 \dots  x_k \in B_k(X)$, $k\in\N$, we denote $\pi({x}_1)\dots \pi({ x}_k)\in B_k(X)$ by $\pi(u)$.
%A shift of finite type $(X,\sigma_X)$ is {\em one-step} if there exists a set $F$ of forbidden blocks of length $\leq 2$ such 
%that $X=\{x\in \{1, \dots, k\}^{\N}: \omega \textnormal{ does not appear in } x \textnormal{ for any }\omega\in F\}$. %A subshift is called a {\em sofic shift} if it is the image of a shift of finite type under a factor map. 
%Since a sofic shift is the image of a one-step  shift of 
%finite type under a one-block factor map (see \cite{LM}),  
Throughout the paper we assume that a shift of finite type $(X, \sigma_X)$ is one-step and $\pi$ is a one-block factor map.  Denote by $M(X, \sigma_X)$ the collection of all $\sigma_X$-invariant Borel probability measures on $X$
and by ${Erg}(X, \sigma_X)$ all ergodic members of $M(X, \sigma_X)$. 

\subsection{Sequences of continuous functions.}\label{seqmany}
We give a brief summary on the basic results on the sequences of continuous functions considered in this paper.  %For more details, see \cite{ily}.
Let $(X, \sigma_X)$ be a subshift on finitely many symbols.  For each $n\in\N$, let $f_n: X\rightarrow \R^{+}$ be a continuous function.
%\begin{defi} \label{aaa}
A sequence $\F=\{\log f_n\}_{n=1}^{\infty}$ is {\em almost additive} if there exists a constant $C\geq 0$ such that 
$e^{-C}f_n(x) f_{m}(\sigma^n_X x)  \leq  f_{n+m}(x) \leq e^{C}f_n(x) f_{m}(\sigma^n_X x) $.
In particular, if $C=0$, then $\F$ is additive. 
The thermodynamic formalism for almost additive sequences was studied in Barrera \cite{b2} and Mummert \cite{m}. More generally, Feng and Huang \cite{FH} introduced  asymptotically additive sequences which generalize almost additive sequences.
A sequence $\F=\{\log f_n\}_{n=1}^{\infty}$ is {\em asymptotically additive} on $X$ if for every $\epsilon > 0$ there exists a continuous function $\rho_{\epsilon}$ such that
\begin{equation}\label{defasi}
\limsup_{n\rightarrow \infty} \frac{1}{n} 
\lVert  \log f_{n}-S_n\rho_{\epsilon} \rVert_{\infty} < \epsilon,
\end{equation}
where $\lVert \cdot \rVert_{\infty}$ is the supremum norm and $(S_n\rho_{\epsilon})(x)=\sum_{i=0}^{n-1}\rho_{\epsilon}(\sigma^i(x))$ for each $x\in X$.
A sequence $\mathcal{F}= \{ \log f_n \}_{n=1}^{\infty}$ is {\it subadditive} if $\F$ satisfies $f_{n+m}(x) \leq  f_n(x) f_{m}(\sigma^n_X x)$. The thermodynamic formalism for subadditive sequences was studied by Cao, Feng and Huang \cite{CFH}. 

%If $f\in C(X)$, then, by setting $f_n=e^{S_n(f)}$ where $(S_nf)(x)=f(x)+f(\sigma(x))+\dots +f(\sigma^{n-1}(x))$ for each $x\in X$, $\F$  
%is an additive sequence. 

We assume certain regularity conditions on sequences.
A sequence $\mathcal{F}= \{ \log f_n \}_{n=1}^{\infty}$ has {\it bounded variation}  if there exists $M \in \R^{+}$ such that
 $\sup \{ M_n : n \in \N\} \leq M$ where
 \begin{equation}\label{forfn}
 M_n= \sup \left\{ \frac{f_n(x)}{f_n(y)} : x,y  \in X, x_i=y_i \textrm{ for } 1 \leq i \leq n\right\}.
\end{equation}
More generally, if $\lim_{n \rightarrow \infty}(1/n)\log M_n=0$, then we say that $\mathcal{F}$ has  {\it tempered variation}. Without loss of generality, we assume $M_{n}\leq M_{n+1}$ for all $n\in \N$. 

%The sequences with bounded variation are a generalization of functions in the Bowen class introduced by Walters \cite{w3}. %If $(X,\sigma_X)$ is an irreducible subshift, then a continuous function $f\in C(X)$ has a unique equilibrium state which is Gibbs for $f$.
A function $f\in C(X)$  belongs to  the {\it Bowen class} if the sequence $\F$ by setting $f_n= e^{S_n(f)}$% where where $(S_nf)(x)=f(x)+f(\sigma(x))+\dots +f(\sigma^{n-1}(x))$ for each $x\in X$, 
has bounded variation \cite{w3}. A function of summable variation belongs to the Bowen class. In this paper,  we consider 
the sequences $\F$ satisfying the following properties.
%the following types of sequences $\F$.
%We say that $\F= \{ \log f_n \}_{n=1}^{\infty}$ is a sequence of type A on $X$  if 
\begin{enumerate}[label=(C\arabic*)]
\item \label{a0} The sequence $\F':=\{\log (f_ne^C)\}_{n=1}^{\infty}$ is subadditive for some $C\geq 0$. 
\item There exist $p\in\N$ and $D>0$ such that  given any $u \in B_n(X), v \in B_m(X)$, $n, m\in\N$, there exist $0\leq k\leq p$ and $w \in B_k(X)$ such that \label {a1} 
\begin{equation*}
\sup \{f_{n+m+k}(x):x \in [uwv]\} \geq D \sup \{f_n(x):x\in [u]\} 
\sup \{f_m(x):x \in [v]\}.
\end{equation*}
\end{enumerate}
More generally, %$\F= \{ \log f_n \}_{n=1}^{\infty}$ is a sequence of type B on $X$ if $\F$ satisfies \ref{a0} and the following condition \ref{a4}.
\begin{enumerate}[label=(D\arabic*)]
\setcounter{enumi}{1}
\item   There exist $p\in\N$ and a positive sequence $\{D_{n,m}\}_{(n,m)\in \N \times \N}$ such that  given any $u \in B_n(X), v \in B_m(X)$, $n, m\in\N$, there exist $0\leq k\leq p$ and $w \in B_k(X)$ such that  \label{a4}
$$\sup \{f_{n+m+k}(x):x \in [uwv]\} \geq D_{n,m} \sup \{f_n(x):x\in [u]\} 
\sup \{f_m(x):x \in [v]\},$$
where $\lim_{n\rightarrow\infty}(1/n)\log D_{n,m}=\lim_{m\rightarrow\infty}(1/m)\log D_{n,m}=0$.  Without loss of generality, we assume that $D_{n,m}\geq D_{n,m+1}$ and $D_{n,m}\geq D_{n+1,m}$.
\end{enumerate}
\begin{rem}
%Examples of the class of sequences satisfying  \ref{a0} and \ref{a1} 
 A sequence $\F=\{\log f_n\}_{n=1}^{\infty}$ satisfying \ref{a0} is not always asymptotically additive (see Section \ref{apli}). 
 The condition \ref{a1} was introduced by Feng \cite{Fe} where 
 the thermodynamic formalism of products of matrices was studied. %(see also \cite{FK}). %examples of class of sequences satisfying  \ref{a0} and \ref{a1}  
The sequences satisfying \ref{a0} and \ref{a1} with bounded variation generalize almost additive sequences
with bounded variation  on subshifts with the weak specification property and they  
has been applied to solve questions concerning the Hausdorff dimensions of nonconformal repellers \cite{Fe1, Y2}. See \cite{KR, ily} for the noncompact case. We will study the sequences satisfying  \ref{a0} and   \ref{a4} in Sections \ref{relation} and \ref{apli}. 
%This class has been also studied under non-compact setting \cite{KR, ily}. 
%On the other hand, 
%continuous functions on subshifts with the weak specification property belong to the class of sequences satisfying \ref{a0} and \ref{a4}. %The condition \ref{a4} which generalizes \ref{a1} was introduced in \cite{ily}. 
%We will study the sequences satisfying  \ref{a0} and   \ref{a4} in Sections \ref{relation} and \ref{apli}. A sequence $\F=\{\log f_n\}_{n=1}^{\infty}$ satisfying \ref{a0} is not always asymptotically additive (see Section \ref{apli}).
\end{rem}
%Sequences of type A first appeared in \ref{} and they have been studied especially in connection with dimension problems \ref{, , , ,}. Sequences of type A include almost additive sequences on irreducible subshifts with bounded variation. On the other hand, 
 %continuous functions on irreducible subshifts with tempered variation %belong to the class of sequences of type B.
Let $(X,\sigma_X)$ be a subshift and  $\F=\{\log f_n\}_{n=1}^{\infty}$ be a subadditive sequence of continuous functions on $X$. % with tempered variation. 
For each $n\in\N$, define $Z_n(\F)$ by $Z_n(\F)=\sum_{i_1\cdots i_n \in B_n(X)} \sup \{f_{n}(x): x\in [i_1\cdots i_n]\}.$ Then the {\em  topological pressure } for $\F$ is defined by 
\begin{equation}\label{defp}
P(\F) =\limsup_{n\rightarrow\infty} \frac{1}{n}\log Z_n(\F).
\end{equation}

\begin{thm}\cite{CFH} \label{VPS}
Let $(X,\sigma_X)$ be a subshift and  $\F=\{\log f_n\}_{n=1}^{\infty}$ be a subadditive sequence on $X$. % with tempered variation. %For each $n\in\N$, define $Z_n(\F)$ by $Z_n(\F)=\sum_{i_1\cdots i_n \in B_n(X)} \sup \{f_{n}(x): x\in [i_1\cdots i_n]\}.$
Then 
\begin{equation}\label{vs}
P(\F) =\sup_{\mu\in M(X,\sigma_X)}\{h_{\mu}(X)+\lim_{n\rightarrow\infty}\frac{1}{n}\int\log f_nd\mu\}.
\end{equation}
%where $P(\F)$ is the  topological pressure for $\F$ defined by 
%\begin{equation*}\label{defp}
%P(\F) =\limsup_{n\rightarrow\infty} \frac{1}{n}\log Z_n(\F).
%\end{equation*}
\end{thm}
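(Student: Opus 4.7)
The plan is to prove the two inequalities of (\ref{vs}) separately, adapting the classical variational-principle strategy of Walters/Misiurewicz to the subadditive setting. Since $\F = \{\log f_n\}_{n=1}^{\infty}$ is subadditive and each $\mu \in M(X,\sigma_X)$ is $\sigma_X$-invariant, the sequence $a_n := \int \log f_n\, d\mu$ satisfies $a_{n+m}\leq a_n + a_m$, so Fekete's lemma yields
\begin{equation*}
\F_*(\mu) := \lim_{n\to\infty}\frac{1}{n}\int \log f_n\, d\mu \;=\; \inf_{n\geq 1}\frac{1}{n}\int \log f_n\, d\mu,
\end{equation*}
and the infimum formula plays an essential role in the harder direction.

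\textbf{Lower bound $P(\F)\geq \sup_\mu\{h_\mu(\sigma_X)+\F_*(\mu)\}$.} Fix $\mu\in M(X,\sigma_X)$ and let $\alpha$ be the partition of $X$ into $1$-cylinders. Apply the concavity inequality $\log\sum_C a_C \geq \sum_C p_C \log(a_C/p_C)$ with $C$ ranging over $\bigvee_{i=0}^{n-1}\sigma_X^{-i}\alpha$, $p_C = \mu(C)$, and $a_C = \sup_{x\in C}f_n(x)$; since $f_n \leq a_C$ on $C$, this produces
\begin{equation*}
\log Z_n(\F) \;\geq\; H_\mu\!\left(\bigvee_{i=0}^{n-1}\sigma_X^{-i}\alpha\right) + \int \log f_n\, d\mu.
\end{equation*}
Divide by $n$ and let $n\to\infty$: the first summand converges to $h_\mu(\sigma_X)$ by the Kolmogorov--Sinai theorem ($\alpha$ is a generator for subshifts), and the second to $\F_*(\mu)$. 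Taking the supremum over $\mu$ gives this direction.

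\textbf{Upper bound and main obstacle.} For small $\delta>0$ and each $n$, choose a maximal $(n,\delta)$-separated set $E_n$, form the weighted measures
\begin{equation*}
\mu_n := \frac{1}{Q_n}\sum_{x\in E_n} f_n(x)\,\delta_x, \qquad \nu_n := \frac{1}{n}\sum_{i=0}^{n-1}(\sigma_X^i)_*\mu_n, \qquad Q_n := \sum_{x\in E_n} f_n(x),
\end{equation*}
and pass to a subsequence along which $\nu_{n_k}\to\mu\in M(X,\sigma_X)$ and $(1/n_k)\log Q_{n_k}\to P(\F)$. A Misiurewicz-style partition argument yields, for each fixed $q\in\N$,
\begin{equation*}
\frac{1}{n_k}\log Q_{n_k} \;\leq\; \frac{1}{q}H_{\nu_{n_k}}\!\left(\bigvee_{i=0}^{q-1}\sigma_X^{-i}\alpha\right) + \frac{1}{n_k}\int \log f_{n_k}\, d\mu_{n_k} + o_k(1),
\end{equation*}
and the entropy term passes to $(1/q)H_\mu(\bigvee_{i=0}^{q-1}\sigma_X^{-i}\alpha)$ by weak$^*$ continuity of entropy on clopen partitions. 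The main obstacle is the integral term, because $(1/n_k)\log f_{n_k}$ is a genuinely varying sequence of continuous functions, so $\nu_{n_k}\to\mu$ does not directly deliver $\F_*(\mu)$. My remedy is to apply the subadditive chain
\begin{equation*}
\log f_{n_k}(x) \;\leq\; \sum_{j=0}^{\lfloor n_k/q\rfloor -1}\log f_q(\sigma_X^{jq}x) + \log f_{r_k}(\sigma_X^{q\lfloor n_k/q\rfloor}x),
\end{equation*}
integrate against $\mu_{n_k}$, reassemble the sum via the $\sigma_X$-averaging built into $\nu_{n_k}$, and pass to the weak$^*$ limit against the \emph{fixed} continuous function $\log f_q$ to obtain an upper bound in terms of $(1/q)\int \log f_q\,d\mu$. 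Letting $q\to\infty$ and invoking $\F_*(\mu)=\inf_q(1/q)\int\log f_q\,d\mu$ closes the argument and yields $P(\F)\leq h_\mu(\sigma_X)+\F_*(\mu)$.
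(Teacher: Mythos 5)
The paper gives no proof of this theorem---it is quoted directly from Cao--Feng--Huang \cite{CFH}---and your outline reproduces essentially the argument of that source: the Misiurewicz-style scheme with the lower bound obtained from the standard concavity inequality $\log\sum_C a_C\geq\sum_C p_C\log(a_C/p_C)$ on cylinder partitions, and the upper bound obtained from empirical measures built on separated sets together with the subadditive splitting that replaces $\log f_{n_k}$ by Birkhoff sums of the fixed continuous function $\log f_q$, followed by $q\to\infty$ and Fekete's infimum formula for $\lim_n\frac1n\int\log f_n\,d\mu$. Modulo routine details you gloss (choosing the $(n,\delta)$-separated sets so that $\sum_{x\in E_n}f_n(x)$ is comparable to the cylinder sums $Z_n(\F)$ used in the paper's definition of $P(\F)$, the phase-averaging and remainder term in the splitting, and the degenerate case where $\lim_n\frac1n\int\log f_n\,d\mu=-\infty$), your proposal is correct and takes essentially the same approach as the cited proof.
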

%For a sequence  $\F=\{\log f_n\}_{n=1}^{\infty}$ on $X$ with tempered variation, define $Z_n(\F)$ by
%$Z_n(\F)=\sum_{i_1\cdots i_n \in B_n(X)} \sup \{f_{n}(x): x\in [i_1\cdots i_n]\}.$
%The topological pressure $P(\F)$ is defined by 
%\begin{equation}\label{defp}
%P(\F) =\limsup_{n\rightarrow\infty} \frac{1}{n}\log Z_n(\F).
%%\end{equation}

%\begin{defi}
%Let $(X,\sigma_X)$ be a subshift and $\F=\{\log f_n\}_{n=1}^{\infty}$ be a subadditive sequence on a $X$.
A measure $m\in M(X, \sigma_X)$ is an {\em equilibrium state} for $\F$  if 
the supremum  in (\ref{vs}) is attained at $m$. 
%\begin{equation*}
%$h_{m}(\sigma_X)+\lim_{n\rightarrow\infty}\frac{1}{n}\int\log f_ndm=\sup_{\mu\in M(X,\sigma_X)}\{h_{\mu}(X)+\lim_{n\rightarrow\infty}\frac{1}{n}\int\log f_nd\mu\}.$
%\end{equation*}
%\end{defi}
%Let $\F$ be a sequence on a subshift with the weak specification property satisfying \ref{a0}. %The goal of this paper is to study some equivalent conditions for an equilibrium state for $\F$ to be a weak Gibbs  measure for a continuous function and relate them with the existence of  a saturated compensation function for a factor map between subshifts.

\begin{defi}
Let $(X,\sigma_X)$ be a subshift and $\F=\{\log f_n\}_{n=1}^{\infty}$ be a subadditive sequence on $X$ satisfying $P(\F)\neq -\infty$.
A measure $\mu\in M(X, \sigma_X)$ is a {\em weak Gibbs measure} for $\F$ if there exists $C_n>0$ such that 
\begin{equation*}
\frac{1}{C_n}< \frac{\mu[x_1\dots x_n]}{e^{-nP(\F)}f_n(x)}<C_n
\end{equation*}
where $\lim_{n\rightarrow\infty}(1/n)\log C_n=0$,
for every $x\in X$ and $n\in \N$.
If there exists $C>0$ such that $C=C_n$ for all $n\in \N$, then $\mu$ is a {\em Gibbs measure}.
\end{defi}

%We note that there always exists an equilibrium state for 

%In this paper, we consider the equilibrium states for sequences $\F$ satisfying \ref{a0}. The goal of this paper is to study some equivalent conditions for an equilibrium state for $\F$ to be a weak Gibbs invariant measure for a continuous function and relate them with the existence of  saturated compensation functions for  factor maps between subshifts.

%The following theorems give us some important  properties of equilibrium states of particular types of sequences of continuous functions. 
If $\mu$ is an invariant weak Gibbs measure for a subadditive sequence $\F$, then it is an equilibrium state for $\F$. The result of Feng \cite[Theorem 5.5]{Fe1} implies the uniqueness of equilibrium states for a class of sequences satisfying \ref{a0} and \ref{a1}.   
\begin{thm}\cite{Fe1} \label{fengthem}
Let $(X,\sigma_X)$ be a subshift and $\F=\{\log f_n\}_{n=1}^{\infty}$  be a sequence on $X$ satisfying 
\ref{a0} and \ref{a1} with bounded variation. Then there is a unique invariant Gibbs measure for $\F$ and it is the unique equilibrium state for $\F$.
\end{thm}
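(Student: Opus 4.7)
The plan is to proceed in three stages: construct an invariant Gibbs measure using the quasi-multiplicativity of the partition sums, deduce its uniqueness via ergodicity, and then upgrade uniqueness of the Gibbs measure to uniqueness of the equilibrium state.

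For existence, I would first show that $Z_n(\F)$ is almost submultiplicative and almost supermultiplicative. Property \ref{a0} together with bounded variation yields $Z_{n+m}(\F)\leq K_1 Z_n(\F)Z_m(\F)$, while summing the inequality from \ref{a1} over all $u\in B_n(X)$, $v\in B_m(X)$ and the at most boundedly many joining words gives $Z_{n+m+k}(\F)\geq K_2 Z_n(\F)Z_m(\F)$ for some $0\leq k\leq p$. Hence $P(\F)=\lim_{n\to\infty}(1/n)\log Z_n(\F)$ exists and $e^{-nP(\F)}Z_n(\F)$ is pinched between two positive constants. Next I would define $\nu_n([w])=\sup_{[w]}f_n/Z_n(\F)$ on $n$-cylinders, symmetrize by $\mu_n=(1/n)\sum_{j=0}^{n-1}\nu_n\circ\sigma_X^{-j}$, and extract a weak-$\ast$ accumulation point $\mu$. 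Combining the two-sided control on $Z_n$ with bounded variation produces a constant $C>0$ with
\[
\frac{1}{C}\leq \frac{\mu[x_1\cdots x_n]}{e^{-nP(\F)}f_n(x)}\leq C
\]
uniformly in $x\in X$ and $n\in\N$, so $\mu$ is an invariant Gibbs measure; by the remark preceding the theorem, $\mu$ is automatically an equilibrium state for $\F$.

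For uniqueness of the Gibbs measure, any two Gibbs measures $\mu,\mu'$ are equivalent on cylinders with uniformly bounded Radon--Nikodym derivative. Using \ref{a1} once more I would establish a mixing-type lower bound $\mu([u]\cap \sigma_X^{-(n+k)}[v])\geq K\mu([u])\mu([v])$ for some $0\leq k\leq p$, which yields ergodicity of $\mu$; two mutually absolutely continuous ergodic invariant measures must coincide. For uniqueness of the equilibrium state, suppose $\nu$ is ergodic with $h_\nu(\sigma_X)+\lim_{n\to\infty}(1/n)\int\log f_n\,d\nu=P(\F)$. The Shannon--McMillan--Breiman theorem gives $-(1/n)\log\nu[x_1\cdots x_n]\to h_\nu(\sigma_X)$ $\nu$-a.e., and Kingman's subadditive ergodic theorem gives $(1/n)\log f_n(x)\to P(\F)-h_\nu(\sigma_X)$ $\nu$-a.e. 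Combined with the Gibbs estimate for $\mu$, this forces $(1/n)\log(\mu[x_1\cdots x_n]/\nu[x_1\cdots x_n])\to 0$ $\nu$-a.e., so $\nu$ and $\mu$ are not mutually singular; ergodicity of both then forces $\nu=\mu$. The non-ergodic case is handled via the ergodic decomposition, using that the functional $\nu\mapsto h_\nu(\sigma_X)+\lim (1/n)\int\log f_n\,d\nu$ is affine by the variational principle (Theorem \ref{VPS}).

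The main obstacle I anticipate is the constant-bookkeeping in the quasi-multiplicativity step: property \ref{a1} only guarantees that \emph{some} connecting word $w$ exists of length $k\leq p$, so one must simultaneously control the sum over the $(k,w)$-choices and use bounded variation to convert estimates for $\sup_{[w]}f_n$ into pointwise estimates for $f_n(x)$ on cylinders, all while keeping constants uniform in $n$ and $m$. Once that uniformity is secured, the construction of $\mu$, the Gibbs property, ergodicity, and the equilibrium identification follow by relatively standard thermodynamic-formalism arguments.
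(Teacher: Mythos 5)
You should first note that the paper itself contains no proof of Theorem \ref{fengthem}: it is imported from Feng \cite{Fe1}, so your attempt can only be measured against the standard arguments in that literature. Your architecture is exactly the standard route and is mostly sound in outline: almost sub- and super-multiplicativity of $Z_n(\F)$ from \ref{a0} and \ref{a1} (the dependence of the connecting word $w$ and its length $k\leq p$ on the pair $(u,v)$ is the bookkeeping issue you flag, and it is routine), a Gibbs measure obtained as a weak-$\ast$ limit of Cesàro averages of normalized sup-measures, and uniqueness of the Gibbs measure via the quasi-product lower bound on $\mu([u]\cap\sigma_X^{-(n+k)}[v])$ plus ergodicity --- here the two-sided comparison between any two Gibbs measures has a constant uniform in $n$, so mutual absolute continuity genuinely follows. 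Two small points: affinity of $\nu\mapsto\lim_n(1/n)\int\log f_n\,d\nu$ comes from its being a pointwise limit of affine functionals (or from the ergodic decomposition result cited in the paper), not from Theorem \ref{VPS}; and the verification of the Gibbs property for the weak-$\ast$ limit, which you dispatch in one sentence, is where most of the technical work of the existence part actually sits.

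The genuine gap is the final step of the uniqueness-of-equilibrium argument. For an ergodic equilibrium state $\nu$ you correctly derive, from Shannon--McMillan--Breiman and Kingman together with the Gibbs property of $\mu$, that $(1/n)\log\bigl(\mu[x_1\cdots x_n]/\nu[x_1\cdots x_n]\bigr)\to 0$ $\nu$-a.e.; but the inference ``hence $\mu$ and $\nu$ are not mutually singular'' is not valid. The ratio $R_n(x)=\mu[x_1\cdots x_n]/\nu[x_1\cdots x_n]$ is a nonnegative $\nu$-martingale, and mutual singularity only forces $R_n\to 0$ $\nu$-a.e., which is perfectly compatible with subexponential (for instance polynomial) decay; Kakutani-type product-measure examples give mutually singular pairs whose cylinder-measure ratios decay subexponentially, so there is no general principle of the kind you invoke. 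Note the contrast with your Gibbs-uniqueness step: there the comparison constant is uniform in $n$, which is exactly what produces absolute continuity, whereas here you only have an $e^{o(n)}$ comparison, which yields nothing. Closing this step is precisely the delicate part of the theorem, and the standard proofs do it differently: either by a Bowen-type entropy estimate showing $h_\nu(\sigma_X)+\lim_n(1/n)\int\log f_n\,d\nu<P(\F)$ for every invariant $\nu$ singular to $\mu$, obtained by inserting the upper Gibbs bound $\sup_{[u]}f_n\leq \mathrm{const}\cdot e^{nP(\F)}\mu([u])$ into a Jensen/log-sum estimate relative to an invariant set of full $\nu$-measure and zero $\mu$-measure, or by first showing that every ergodic equilibrium state itself inherits a Gibbs-type bound from \ref{a1} before comparing it with $\mu$. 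As written, your proposal establishes existence and uniqueness of the invariant Gibbs measure, but not uniqueness of the equilibrium state.
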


%We say that a sequence $\F=\{\log f_n\}_{n=1}^{\infty}$ is almost additive if there exists a $C>0$ such that 
%$e^{-C}f_{n+m}(x) \leq  f_n(x) f_{m}(\sigma^n_X x)\leq e^{C}f_{n+m}(x)$. Almost additive sequences are generalization of continuous functions and the thermodynamic formalism for the sequences was first studied in Barrera \cite{b2} and Mummert \cite{m}.

Cuneo \cite{Cu} showed that finding equilibrium states for asymptotically additive sequences is equivalent to that for continuous functions. 

\begin{thm}\label{cuneo}Special case of \cite[Theorem 1.2]{Cu}.
Let $(X,\sigma_X)$ be a subshift and $\F=\{\log f_n\}_{n=1}^{\infty}$ be an asymptotically additive sequence on $X$. Then there exists $f\in C(X)$ such that 
\begin{equation}\label{cuneothm}
\lim_{n\rightarrow \infty} \frac{1}{n} 
\lVert \log f_{n}-S_nf \rVert_{\infty}=0.
\end{equation}
 \end {thm}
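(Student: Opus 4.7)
The strategy I would follow is to construct $f$ as a uniform limit of the approximating functions that witness the asymptotic additivity of $\F$. For each $k \in \N$, pick $\rho_k := \rho_{1/k} \in C(X)$ from the definition, so
\begin{equation*}
\limsup_{n\to\infty}\frac{1}{n}\lVert \log f_n - S_n \rho_k\rVert_\infty < \frac{1}{k}.
\end{equation*}
An immediate application of the triangle inequality gives, for any $k,l \in \N$,
\begin{equation*}
\limsup_{n\to\infty}\frac{1}{n}\lVert S_n(\rho_k - \rho_l)\rVert_\infty < \frac{1}{k}+\frac{1}{l},
\end{equation*}
so $(\rho_k)$ is Cauchy in the seminorm $\lVert g\rVert_* := \limsup_n \lVert S_n g\rVert_\infty/n$. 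The plan is to upgrade this to sup-norm convergence by modifying each $\rho_k$ through the addition of a coboundary of the form $h_k - h_k\circ \sigma_X$; such modifications leave $S_n \rho_k$ unchanged up to a uniformly bounded telescoping term $h_k - h_k\circ\sigma_X^n$, and therefore do not affect the relevant $1/n$-scale quantities.

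More concretely, I would argue as follows. After passing to a subsequence if necessary, I would show that one can choose, at stage $k$, a locally constant representative $\tilde\rho_k$ (depending only on finitely many coordinates) within a sup-norm distance $O(1/k)$ of $\rho_k$ modulo a coboundary. Because the subshift is compact and $\tilde\rho_k$ is determined by finitely many values, the Cauchy property in $\lVert\cdot\rVert_*$ transfers to the actual sup-norm on the $\tilde\rho_k$ after subtracting suitable coboundaries, so the modified sequence is uniformly Cauchy and converges to some $f \in C(X)$. Once $f$ is in hand, the conclusion follows by the triangle inequality:
\begin{equation*}
\limsup_{n\to\infty}\frac{1}{n}\lVert \log f_n - S_n f\rVert_\infty \leq \limsup_{n\to\infty}\frac{1}{n}\lVert \log f_n - S_n\tilde\rho_k\rVert_\infty + \lVert \tilde\rho_k - f\rVert_\infty = O(1/k),
\end{equation*}
and letting $k\to\infty$ yields the required $o(1)$ bound.

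The principal obstacle is precisely the coboundary-adjustment step: passing from a $\lVert\cdot\rVert_*$-Cauchy sequence in $C(X)$ to a uniformly Cauchy one. The kernel of $\lVert\cdot\rVert_*$ strictly contains the coboundaries, so this is not an automatic consequence of completeness of $(C(X),\lVert\cdot\rVert_\infty)$. The key technical input is the observation that on a subshift every continuous function can be uniformly approximated by locally constant ones, which reduces the problem to a finite combinatorial comparison on cylinder sets of growing length; a careful diagonal argument then allows a simultaneous coboundary correction of all $\rho_k$ so that the corrected functions converge uniformly. This is the heart of Cuneo's argument and the step that requires the most care, because the length of cylinders used for the approximation at stage $k$ must be chosen in a manner coupled to the rate implicit in the $\lVert\cdot\rVert_*$-Cauchy property.
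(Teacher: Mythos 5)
The paper offers no proof of this statement -- it is quoted verbatim from Cuneo \cite{Cu} -- so your sketch has to stand on its own, and as written it does not: the decisive step is asserted rather than proved. You correctly reduce everything to the claim that a sequence $(\rho_k)$ which is Cauchy for the seminorm $\lVert u\rVert_*=\limsup_n\frac1n\lVert S_nu\rVert_\infty$ can, after subtracting suitable coboundaries, be made Cauchy in the sup norm. But the mechanism you propose for this -- uniform approximation by locally constant functions, a ``finite combinatorial comparison on cylinder sets'' and a diagonal argument -- is not an argument: local constancy is irrelevant to the difficulty (Cuneo's theorem holds for arbitrary continuous maps of compact metric spaces, not just subshifts), and nothing in the sketch explains how the coboundary corrections are produced or why their size is controlled by the $\lVert\cdot\rVert_*$-Cauchy rate. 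Since, as you yourself say, this step is the heart of the matter, the proposal leaves the actual content of the theorem unproved.

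The ingredient that fills the gap is the inequality $\mathrm{dist}_{\lVert\cdot\rVert_\infty}\bigl(u,\{h-h\circ\sigma_X:h\in C(X)\}\bigr)\le\sup_{\mu\in M(X,\sigma_X)}\bigl|\int u\,d\mu\bigr|\le\lVert u\rVert_*$, which follows from Hahn--Banach duality once one checks that a bounded functional annihilating all coboundaries is an invariant signed measure whose Jordan components are again invariant. Equivalently, $\lVert\cdot\rVert_*$ dominates the quotient norm of $\lVert\cdot\rVert_\infty$ modulo the closed subspace $\overline{\{h-h\circ\sigma_X\}}$, so that quotient is a Banach space and your $\lVert\cdot\rVert_*$-Cauchy sequence automatically admits a continuous function $f$ with $\lVert\rho_k-f\rVert_*\to0$; alternatively, the inequality lets you choose honest coboundaries $c_k$ with $\lVert\rho_{k+1}+c_{k+1}-(\rho_k+c_k)\rVert_\infty$ summable along a subsequence, which is exactly the uniform Cauchyness you wanted, and then your final triangle inequality gives (\ref{cuneothm}). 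With this duality step supplied, your outline becomes a complete and rather soft functional-analytic proof, different in flavour from the explicit construction in \cite{Cu}; without it, the proof is missing precisely where the theorem lives.
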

Hence if $\F$ is asymptotically additive, then there exists $f\in C(X)$ such that 
$\lim_{n\rightarrow \infty} (1/n) \int \log f_{n} d\mu=\int f d\mu$ 
%\end{equation*} 
%\begin{equation*}% \lim_{n\rightarrow \infty} \frac{1}{n} \int \log f_{n} d\mu=\int f d\mu 
%\end{equation*} 
 for every $\mu\in M(X,\sigma_X).$ It is clear that (\ref{cuneothm}) implies that $\F$ is asymptotically additive.

%\begin{rem}A sequence $\F=\{\log f_n\}_{n=1}^{\infty}$ of type A is not always asymptotically additive. By using \cite[Example 3.1]{Sh3}, we can easily obtain  such a sequence (see Example \ref{shin} in Section \ref{apli})
%\end{rem}

%\subsection{Relative pressure functions }\label{brelative}

%In Section \ref{relation}, we will use relative pressure functions to study the existence of continuous compensation functions for factor maps. For a factor map between compact sets, Ledrappier and Walters \cite{LW} showed the relative variational principle for relative pressure. The properties of relative pressure have been studied in \cite{Wa} (see also \cite{PS, Sh3}).

%Let $(X, \sigma_X), (Y, \sigma_Y)$ be subshifts and  
%$\pi:X\rightarrow Y$ be a factor map.
%Let $f$ be a continuous function on $X$. 
%Let $n\in \N$ and $\delta>0$. A subset $E$ of $X$ is an {\em $(n, \delta)$ separated subset} of 
%$X$ if $\max_{0\leq i\leq n-1}d({\sigma^{i}_X}(x), {\sigma^{i}_X}(y))>\delta$ for all $x, y \in E, x\neq y$. 
%For each $y\in Y$, define
%$$P_n(\sigma_X, \pi, f, \delta)(y)=\sup\{\sum_{x\in E}e^{(S_nf)(x)}:E \text { is an } (n, \delta) \text{ separated subset of } \pi^{-1}(\{y\})\},$$
%$$P(\sigma_X, \pi, f, \delta)(y)=\limsup_{n\rightarrow \infty}\frac{1}{n}\log P_{n}(\sigma_X, \pi, f,\delta)(y),$$
%$$P(\sigma_X, \pi, f)(y)=\lim_{\delta\rightarrow 0}P(\sigma_X, \pi,f, \delta)(y).$$
%The function $P(\sigma_X, \pi, f):Y\rightarrow (-\infty, \infty)$ is  the {\em relative pressure function} of $f\in C(X)$ with respect to $(\sigma_X, \sigma_Y, \pi)$. In general it is merely Borel measurable. 

. 
%There does not always exist $h\in C(Y)$ such that $\int P(\sigma_X, \pi, f) dm=\int h dm$ for every $m\in M(Y, \sigma_Y)$. In Sections \ref{relation} and \ref{apli}, we will study the existence of such a function $h$. 

\section{subadditive sequences}
%Given a subadditive sequence $\F$ on a subshift $X$ and an equilibrium state $\mu$ for $
%\F$, when is the equilibrium state for $\mu$  also an equilibrium state for a continuous function? 
In this section, we consider Question 1 from Section \ref{intro}.  Proposition \ref{main11}  is valid for the case when $X$ is a compact metric space and $T: X\rightarrow X$ is a continuous transformation of $X$.  Proposition \ref{main11} will be applied in the next sections. 
%\textbf{we could study the condition for nonempty ergodic. equilibrium states conatin ergodic measures all the time?}
\begin{prop}\label{main11}
Let $(X,\sigma_X)$ be a subshift and $\F=\{\log f_n\}_{n=1}^{\infty}$  a subadditive sequence on $X$. %Suppose $\liminf_{n\rightarrow\infty}({1}/{n}) \log f_n (x)>-\infty$ for each $x\in X$. 
For $h\in C(X)$, the following conditions are equivalent.

\begin{enumerate}[label=(\roman*)]
\item  $$ \lim_{n\rightarrow\infty}\frac{1}{n}\int \log f_n d\mu=\int hd\mu$$
for every $\mu\in M(X, \sigma_X).$\label{mm1}

\item $$ \lim_{n\rightarrow\infty}\frac{1}{n}\int \log f_n d\mu=\int hd\mu$$
for every $\mu\in Erg(X, \sigma_X).$\label{mm2}

\item $$ \lim_{n\rightarrow\infty}\frac{1}{n}\log \left(\frac{f_n(x)}{e^{(S_nh)(x)}}\right)=0$$
$\mu$-almost everywhere on $X$, for every  $\mu\in Erg(X, \sigma_X)$.\label{mm3}

%\item
 %$$ \lim_{n\rightarrow\infty}\frac{1}{n}\log \big(\frac{f_n(x)}{e^{(S_nh)(x)}}\big)=0$$
%$\mu$-almost everywhere on $X$, for every  $\mu\in M(X, T)$.\label{mm4}
\end{enumerate}

\end{prop}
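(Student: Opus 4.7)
I will prove \ref{mm1} $\Rightarrow$ \ref{mm2} $\Leftrightarrow$ \ref{mm3} $\Rightarrow$ \ref{mm1}. The first implication is immediate since every ergodic measure is invariant, so the real work is in the cycle \ref{mm2} $\Leftrightarrow$ \ref{mm3} and in lifting \ref{mm2} back up to \ref{mm1}. The ingredients I would use are Kingman's subadditive ergodic theorem, Birkhoff's ergodic theorem, Fekete's subadditive lemma (applied to $a_n(\mu) := \int \log f_n \, d\mu$, which is subadditive in $n$ by $\sigma_X$-invariance of $\mu$), and the ergodic decomposition.

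For \ref{mm2} $\Rightarrow$ \ref{mm3}, fix $\mu \in Erg(X,\sigma_X)$. Since $\log f_n$ is continuous on the compact space $X$ hence bounded and integrable, Kingman's subadditive ergodic theorem applies to the subadditive cocycle $\{\log f_n\}$, giving
\begin{equation*}
\lim_{n\to\infty} \frac{1}{n}\log f_n(x) \;=\; \lim_{n\to\infty}\frac{1}{n}\int \log f_n \, d\mu \;=\; \int h\, d\mu \qquad \mu\text{-a.e.},
\end{equation*}
where the last equality is hypothesis \ref{mm2}. Birkhoff's theorem applied to $h\in C(X)\subset L^1(\mu)$ gives $(1/n)S_n h(x)\to \int h\, d\mu$ $\mu$-a.e. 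Subtracting yields \ref{mm3}. Conversely, for \ref{mm3} $\Rightarrow$ \ref{mm2}, Birkhoff again gives $(1/n)S_n h \to \int h\, d\mu$ $\mu$-a.e., so combining with \ref{mm3} we get $(1/n)\log f_n(x)\to \int h\, d\mu$ $\mu$-a.e. Kingman identifies this a.e.\ limit with $\lim_n (1/n)\int \log f_n\, d\mu$, which is \ref{mm2}.

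For \ref{mm2} $\Rightarrow$ \ref{mm1}, fix $\mu \in M(X,\sigma_X)$ and write its ergodic decomposition $\mu = \int \mu_\omega \, dP(\omega)$. Then
\begin{equation*}
\frac{1}{n}\int \log f_n \, d\mu \;=\; \int \frac{1}{n}\int \log f_n \, d\mu_\omega \; dP(\omega),
\end{equation*}
and by \ref{mm2} the inner integrand converges pointwise in $\omega$ to $\int h\, d\mu_\omega$. To interchange the limit with the $P$-integral I would invoke bounded convergence: by subadditivity and invariance, $\int \log f_n \, d\mu_\omega \leq n \int \log f_1\, d\mu_\omega \leq n\,\|\log f_1\|_\infty$, so the integrand is bounded above by $\|\log f_1\|_\infty$; by Fekete, $(1/n)\int \log f_n\, d\mu_\omega \geq \inf_k (1/k)\int \log f_k\, d\mu_\omega = \int h\, d\mu_\omega \geq -\|h\|_\infty$, giving a uniform lower bound. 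Hence the dominated convergence theorem applies and yields \ref{mm1}, the limit on the left existing by the same Fekete argument applied directly to $\mu$.

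\textbf{Main obstacle.} The only delicate step is the \ref{mm2} $\Rightarrow$ \ref{mm1} direction, where one must pass to the limit under the ergodic-decomposition integral. The two-sided uniform bound coming from subadditivity (upper) combined with the hypothesized limit $\int h\, d\mu_\omega$ (lower, via Fekete) is the key trick that makes bounded convergence applicable; without knowing the limit in advance one would only have a one-sided bound. Everything else reduces to direct applications of Kingman and Birkhoff, since positivity and continuity of the $f_n$ on compact $X$ make $\log f_n$ bounded and the integrability hypotheses of Kingman trivially satisfied.
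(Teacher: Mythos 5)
Your proof is correct and follows essentially the same route as the paper: the equivalence of (ii) and (iii) rests on Kingman's subadditive ergodic theorem (the paper applies it once to the combined cocycle $\log\bigl(f_n/e^{S_nh}\bigr)$, while you apply Kingman to $\log f_n$ and Birkhoff to $h$ separately, which is equivalent bookkeeping), and (ii) implies (i) via the ergodic decomposition. The only difference is that the paper simply cites \cite[Proposition A.1(c)]{FH} for the decomposition step, whereas you prove it directly with a bounded-convergence argument based on the two-sided bound from subadditivity and Fekete's lemma; both are fine.
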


\begin{rem} Proposition \ref {main11} holds for a sequence $\F$ satisfying \ref{a0} because $\{\log (e^Cf_n)\}_{n=1}^{\infty}$ is a subadditive sequence. 
%(2)  Proposition \ref {main11} \ref{mm1} implies that 
%$ \lim_{n\rightarrow\infty}({1}/{n})\int \log f_n d\mu>-\infty$ for every 
 %$\mu\in M(X, \sigma_X).$
 \end{rem}
\begin{proof}
It is clear that \ref{mm1} implies \ref{mm2}. By the ergodic decomposition (see \cite[Proposition A.1 (c)]{FH}), \ref{mm2} implies \ref{mm1}. Now we assume that \ref{mm2} holds. For a measure $\mu\in Erg(X,\sigma_X)$, we obtain 
\begin{equation*}
 \lim_{n\rightarrow\infty}\frac{1}{n}\int \log \left(\frac{f_n}{e^{(S_nh)}}\right) d\mu=0.
 \end{equation*}
 To see that this implies  \ref{mm3},
define $r_n(x):=f_n(x)/e^{(S_nh)(x)}$. Then $\log r_1\in L^{+}_{1}(\mu)$ and 
$\{\log r_n\}_{n=1}^{\infty}$ is a subadditive sequence of continuous functions on $X$. Since $\mu$ is an ergodic measure, by Kingman's subadditive ergodic theorem,  we obtain that %(see \cite{Wtext}), 
$\lim_{n\rightarrow\infty}(1/n)\log r_n(x)=\inf_{n\in \N}(1/n)\int \log r_n d\mu=0$ $\mu$-almost everywhere on $X$. %there exists a Borel measurable function $R:X\rightarrow \R\cup\{-\infty\}$ such that (a) $R=R\circ \sigma_X$, 
%$\mu$-almost everywhere and (b)$\lim_{n\rightarrow\infty}(1/n)\log r_n=R$,  $\mu $-almost everywhere. Since $\mu$ is an ergodic measure, $R$ is constant 
%$\mu$-almost everywhere. Hence  (\ref{pr1}) implies $\lim_{n\rightarrow\infty}(1/n)\log r_n(x)=0$, $\mu$-almost everywhere on $X$ and we obtain \ref{mm3}. 
Now we assume that \ref{mm3} holds. %Then for $\mu\in  Erg(X, T, \F)$,
 %$$ \lim_{n\rightarrow\infty}\frac{1}{n}\log (\frac{f_n(x)}{e^{(S_nh)(x)}})=0$$
%$\mu$-almost everywhere on $X$. 
Given $\mu\in Erg(X,\sigma_X)$, applying the subadditive ergodic theorem to the sequence $\{\log r_n\}_{n=1}^{\infty}$, we obtain  
 \begin{equation*}
 \begin{split}
 \int \lim_{n\rightarrow\infty}\frac{1}{n}\log \left(\frac{f_n}{e^{(S_nh)}} \right) d\mu&=\lim_{n\rightarrow\infty} \frac{1}{n}\int \log \left( \frac{f_n}{e^{(S_nh)}} \right)d\mu
 \\ &=\lim_{n\rightarrow\infty} \left(\frac{1}{n}\int \log {f_n}d\mu-
\int h d\mu\right).
 \end{split}
 \end{equation*}
 Hence we obtain \ref{mm2}.
 %\textbf{ALSO 4!!}
\end{proof}

\section{Subadditive sequences which are asymptotically additive}\label{subas}
 Subadditive sequences are not always asymptotically additive. In this section we study a class of subadditive sequences on shift spaces (compact spaces) which are also asymptotically additive. The goal of this section is to characterize such sequences using a particular property for periodic points. The results in this section are applied in 
Sections \ref{relation} and \ref{apli} to study relative pressure functions.% and existence of compensation functions for factor maps between subshifts.%and to study the factors of weak Gibbs measures for continuous functions.
   
\begin{lemma} \label{maina} 
Let $(X,\sigma_X)$ be a subshift and $\F=\{\log f_n\}_{n=1}^{\infty}$ be a sequence on $X$ satisfying \ref{a0} with tempered variation. 
 Suppose that $\F$ satisfies the following two conditions \ref{ms1} and \ref{ms2}.

 \begin{enumerate}[label=(\roman*)]
\item There exists  $h\in C(X)$ such that  $$ \lim_{n\rightarrow\infty}\frac{1}{n}\log 
\left(\frac{f_n(x)}{e^{(S_nh)(x)}}\right)=0$$
for every periodic point $x\in X$. \label{ms1}

\item There exist $k, N\in \N$ and a sequence $\{M_n\}_{n=1}^{\infty}$ of positive real numbers satisfying $ \lim_{n\rightarrow\infty}({1}/{n})\log M_n=0$
such that  for given any $u\in B_n(X), n \geq N$, there exist $0 \leq q \leq k$ and $w\in B_q(X)$ such that $z:=(uw)^{\infty}$ is a point in $X$
 satisfying %at $z:=(uw)^{\infty}$ 
 \begin{equation}\label{eqk1}
f_{j(n+q)}(z)\geq (M_n \sup\{f_n(x): x\in [u]\})^j
\end{equation}
for every $j\in \N$.
\label{ms2}
\end{enumerate}
Then $\F$ is an asymptotically additive sequence on $X$.
\end{lemma}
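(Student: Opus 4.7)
My plan is to establish the stronger conclusion that $\lim_{n\to\infty}(1/n)\lVert\log f_n - S_n h\rVert_\infty = 0$, which immediately yields asymptotic additivity (take $\rho_\epsilon = h$ for every $\epsilon > 0$; cf.\ Theorem \ref{cuneo}). Set $\psi_n := \log f_n - S_n h$, let $V_n$ be the tempered variation constants of $\F$ (so $(1/n)\log V_n \to 0$), and note that uniform continuity of $h$ supplies a sequence $\theta_n = o(n)$ with $|(S_n h)(x)-(S_n h)(y)| \leq \theta_n$ whenever $x,y$ agree on their first $n$ coordinates. It suffices to bound $(1/n)\sup_{x\in[u]}\psi_n(x)$ and $(1/n)\inf_{x\in[u]}\psi_n(x)$ uniformly over cylinders $[u]$ of length $n\geq N$, and for each such $[u]$ I will use the periodic point $z = z(u) := (uw)^{\infty}\in[u]$ of period $n+q$, $0\leq q\leq k$, supplied by \ref{ms2}.

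For the upper bound, \ref{ms2} gives $\log f_{j(n+q)}(z) \geq j\bigl(\log M_n + \log\sup_{[u]}f_n\bigr)$ for every $j$. Since $z$ has period $n+q$, periodicity gives $(S_{j(n+q)}h)(z) = j(S_{n+q}h)(z)$; dividing by $j$, applying \ref{ms1} at $z$ along the subsequence $m=j(n+q)$, and letting $j\to\infty$ converts the inequality into $\log\sup_{[u]}f_n \leq (S_{n+q}h)(z) - \log M_n$. Combining this with $(S_{n+q}h)(z) - (S_n h)(x) \leq \theta_n + k\lVert h\rVert_\infty$ for $x\in[u]$, I obtain $\sup_{[u]}\psi_n \leq \theta_n + k\lVert h\rVert_\infty - \log M_n$, which is $o(n)$ after dividing by $n$ because $(1/n)\log M_n\to 0$.

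For the lower bound, I chain subadditivity with \ref{ms1} in the reverse direction. Iterating the subadditivity of $\{\log(e^C f_n)\}$ over the period of $z$ yields $\log f_{j(n+q)}(z) \leq j\log f_{n+q}(z) + (j-1)C$; dividing by $j$ and taking $j\to\infty$ via \ref{ms1} forces $\log f_{n+q}(z) \geq (S_{n+q}h)(z) - C$. One more application of subadditivity, together with the trivial bound $\lVert f_q\rVert_\infty \leq K := \max_{1\leq q\leq k}\lVert f_q\rVert_\infty$, produces $\log f_n(z) \geq (S_n h)(z) - k\lVert h\rVert_\infty - \log K - 2C$. Tempered variation transfers this from $z$ to every $x\in[u]$ at an extra loss of $\log V_n$, while the modulus estimate lets me replace $(S_n h)(z)$ by $(S_n h)(x)$ at cost $\theta_n$; dividing by $n$ gives $\inf_{[u]}\psi_n/n \geq -o(1)$.

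The main obstacle is the lower bound, because \ref{ms2} is naturally an inequality in the ``wrong'' direction: it caps $\sup_{[u]}f_n$ from above via the growth of $f$ on the periodic orbit of $z$, but gives no direct control of $f_n$ from below. The key idea is that iterating subadditivity on that same periodic orbit forces $\log f_{n+q}(z)$ to be bounded \emph{below} by the limit $(S_{n+q}h)(z)$ pinned down by \ref{ms1}, exactly supplying the reverse inequality needed, after which tempered variation carries the bound from the single point $z$ to all of $[u]$. Once both uniform estimates on $\psi_n/n$ are in hand, asymptotic additivity of $\F$ is immediate.
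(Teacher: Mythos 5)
Your proof is correct and follows essentially the same route as the paper's: for each cylinder $[u]$ you use the periodic point $z=(uw)^{\infty}$ from \ref{ms2} together with \ref{ms1} at $z$ to get the uniform upper bound on $\log f_n - S_n h$, and you get the matching lower bound by iterating the subadditivity from \ref{a0} along the orbit of $z$ and transferring to $[u]$ via tempered variation, concluding $\lim_n \frac{1}{n}\lVert \log f_n - S_n h\rVert_\infty = 0$. The only difference is cosmetic: you take the limit $j\to\infty$ to extract clean inequalities such as $\log\sup_{[u]}f_n \leq (S_{n+q}h)(z)-\log M_n$, where the paper runs the same comparison with explicit $\epsilon$-estimates at finite large $j$.
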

\begin{rem}
%(1) Note that in the above theorem a lower bound property in the sequence (the condition \ref{a1} or \ref{a4})  is not assumed for $\F$. (2) 
Let $(X,\sigma_X)$ be an irreducible shift of finite type and $k$ be a weak specification number.  Then for each $u\in B_n(X)$ 
there exist $0\leq q\leq k$ and  $w\in B_q(X)$ such that 
$(uw)^{\infty}\in X$.
\end{rem}
\begin{proof}
Suppose that \ref{ms1} and \ref{ms2} hold. We will show that
\begin{equation}\label{kagi}
\lim_{n\rightarrow\infty}\frac{1}{n} \lVert  \log \left(\frac{f_n}{e^{(S_nh)}}\right)\rVert _{\infty}=0.
\end{equation}
Let $k, M_n, N$ be defined as in \ref{ms2}.  For $h\in C(X)$, let
\begin{equation}\label{temph}
M^h_n:=\sup\{\frac{e^{(S_nh)(x)}}{e^{(S_nh)(x')}}: x_i=x'_i, 1\leq i\leq n\}
\end{equation}
for each $n\in \N$ and $C_h:=\max_{0\leq i \leq k}\{(S_ih)(x): x\in X\}$, where $(S_0h)(x):=1$ for every $x\in X$.
Let $\epsilon>0$. Take $N_1\in \N$ large enough so that 
$$\frac{1}{n}\vert \log ({M^h_n}{e^{C_h}})\vert<\epsilon, 
\frac{1}{n}\vert \log M_n\vert<\epsilon \text{ and } \frac{n}{n+k}>\frac{1}{2} $$
 for all $n>N_1$.
Let $N_2=\max\{N, N_1\}$ and let $n\geq N_2$. Then for $x_1\dots x_n\in B_n(X)$, there exists $w\in B_{q}(X), 0\leq q\leq k,$ such that %$(x_1\dots x_nwx_1\dots x_n)^j\in B_{j(n+k)}(X)$ for each $j\in \N$. 
%Hence 
$y^{*}:=(x_1,\dots, x_n, w)^{\infty}\in X$ satisfying (\ref{eqk1}). Since $y^{*}$ is a periodic point,  \ref{ms1} implies that there exists $N(y^{*}) \in\N$ such that
$$ \frac{1}{i}\vert \log \left(\frac{f_i(y^{*})}{e^{(S_ih)(y^{*})}}\right)\vert< \epsilon$$
for all $i > N(y^{*})$.
%For each $n\in \N$, let
%$$M^h_n:=\sup\{e^{(S_kh)(x)}/e^{(S_kh)(x')}\vert x_i=x'_i, 1\leq i\leq k\}$$
%and $C_h:=\max_{w\in B_k(X)}\{(S_kh)(x)\vert x\in [w]\}$. Take $N'$ large enough so that
%$1/(n+k)\vert \log ({M^h_n}{e^{C_h}})\vert<\epsilon$ for all $n>N'$ and let 
%$N_3=\max \{N_2, N'\}$.
Take
$j> N(y^{*})$. By  \ref{ms2}, for $ z \in [x_1\dots x_n]$, %=(z_i)_{i=1}^{\infty}\in X$, where  $z_{(i-1)(n+k)+1}\dots z_{i(n+k)}=uw_u, 1\leq i\leq j$,  
we obtain
\begin{equation*}
\begin{split}
\epsilon>&\frac{1}{j(n+q)}
\log \left(\frac{f_{j(n+q)}(y^{*})}{e^{(S_{j(n+q)}h)(y^{*})}}\right)
\geq \frac{1}{j(n+q)}\log 
\left(\frac{M_nf_{n}(z)}{M^h_ne^{(S_{n}h)(z)}e^{C_h}}\right)^j\\
&=\frac{1}{(n+q)}\log M_n
+\frac{1}{(n+q)}\log \left(\frac{f_{n}(z)}{e^{(S_{n}h)(z)}}\right)
-\frac{1}{(n+q)}\log ({M^h_n}{e^{C_h}})\\
&>-2\epsilon +\frac{1}{n+q} \log \left(\frac{f_{n}(z)}{e^{(S_{n}h)(z)}}\right)>-2\epsilon +\frac{n}{n+q} \left (\frac{1}{n} \log \left(\frac{f_{n}(z)}{e^{(S_{n}h)(z)}}\right)\right).
\end{split}
 \end{equation*}
 Without loss of generality assume 
$(1/n)\log(f_n(z)/e^{(S_nh)(z)}) >0$.
Hence for any $x_1\dots x_n\in B_n(X), n\geq N_2, z\in [x_1\dots x_n]$, we obtain that  
$({1}/{n}) \log ({f_{n}(z)}/e^{(S_{n}h)(z)})<6\epsilon.$ 
 %Here we note that $N_2$ depend only the length of the allowable word $x_1\dots x_n$.
 
Next we show that there exists $N'\in \N$ such that for all for all $z\in [x_1,\dots, x_n], n\geq N'$,
$({1}/{n}) \log ({f_{n}(z)}/e^{(S_{n}h)(z)})>-4\epsilon.$ 
Since $\F$ has tempered variation, for each $n\in \N$,
 let $M^{\F}_n:=\sup\{f_n(x)/f_n(x'): x_i=x'_i, 1\leq i\leq n\}$.
 Let $C_{\F}:=\max_{0\leq i\leq k}\{f_i(x): x\in X\}$, where $f_0(x):=1$ for every $x\in X$,  and 
 $\bar{C}_h:=\min_{0\leq i\leq k}\{(S_ih)(x): x\in X\}$. Let $C$ be defined as in \ref{a0}.
 Take $N_3\in \N$ large enough so that 
 $$\frac{1}{n}\vert \log ({M^{\F}_n}{M^{h}_n}C_{\F}{e^{-{\bar{C}}_h+2C}})\vert<\epsilon \text{ and } \frac{n}{n+k}>\frac{1}{2}$$ for all $n>N_3$.
Since $\F$ satisfies \ref{a0},  we obtain that 
 \begin{equation*}
\begin{split}
\frac{f_{j(n+q)}(y^{*})}{e^{(S_{j(n+q)}h)(y^{*})}}
\leq & \left(\frac{C_{\F}M^{h}_ne^{2C}\sup\{f_n(y):y\in [x_1\dots x_n]\}}
{e^{\bar{C}_h}\sup\{e^{(S_nh)(y)}: y\in [x_1\dots x_n]\}}\right)^j\\
&\leq  \left(\frac{C_{\F}M^{h}_nM^{\F}_ne^{2C}f_n(z)}
{e^{\bar{C}_h+(S_nh)(z)}}\right)^j,
\end{split}
\end{equation*}
where in the last inequality $z$ is a point from the cylinder set 
$[x_1\dots x_n]$.
Hence for $j> N(y^{*})$,
 \begin{equation*}
\begin{split}
-\epsilon<&\frac{1}{j(n+q)}
\log \left(\frac{f_{j(n+q)}(y^{*})}{e^{(S_{j(n+q)}h)(y^{*})}}\right)\\
&<\frac{1}{n+q} \log \left(\frac{f_{n}(z)}{e^{(S_{n}h)(z)}}\right)
+\frac{1}{n+q} \log ({M^{\F}_n}{M^{h}_n}C_{\F}{e^{-{\bar{C}}_h+2C}})\\
&<\frac{1}{n+q} \log \left(\frac{f_{n}(z)}{e^{(S_{n}h)(z)}}\right)+\epsilon\\
&=\frac{n}{n+q} \left(\frac{1}{n} \log \left(\frac{f_{n}(z)}{e^{(S_{n}h)(z)}}\right)\right)+\epsilon.
\end{split}
\end{equation*} 
Without loss of generality assume $(1/n)\log (f_{n}(z)/e^{(S_{n}h)(z)})<0$. For all $z \in [x_1\dots x_n],n \geq N_3$, 
we obtain that 
$(1/n)\log(f_n(z)/e^{(S_nh)(z)}) >-4\epsilon.$ 
Hence we obtain (\ref{kagi}).
 \end{proof}
 By Lemma \ref{maina}, we obtain some conditions for a sequence $\F$ satisfying \ref{a0} to be asymptotically additive, assuming that Lemma \ref{maina} \ref{ms2} is satisfied. 
 \begin{thm} \label{mainaa}
Let $(X,\sigma_X)$ be a subshift. Let $\F=\{\log f_n\}_{n=1}^{\infty}$ be a sequence on $X$ satisfying \ref{a0}  with tempered variation and Lemma \ref{maina} \ref{ms2}. %Suppose $\liminf_{n\rightarrow\infty}({1}/{n}) \log f_n (x)>-\infty$ for each $x\in X$. 
Then  the following statements  are equivalent for $h\in C(X)$.
 \begin{enumerate}[label=(\roman*)]
\item  $\F$ is  asymptotically additive on $X$ satisfying 
$$\lim_{n\rightarrow \infty} \frac{1}{n} \lVert \log 
\Big (\frac{f_n}{e^{(S_n h)}}\Big) \rVert_{\infty}=0.$$
\label{11}
\item 
$$ \lim_{n\rightarrow\infty}\frac{1}{n}\int \log f_n d\mu=\int hd\mu>-\infty$$
for every $\mu\in M(X, \sigma_X).$ \label{13}

\item $$ \lim_{n\rightarrow\infty}\frac{1}{n}\log \left(\frac{f_n(x)}{e^{(S_nh)(x)}}\right)=0$$
for every periodic point $x\in X$. \label{12}

\item $$ \lim_{n\rightarrow\infty}\frac{1}{n}\log \left(\frac{f_n(x)}{e^{(S_nh)(x)}}\right)=0$$
for every  $x\in X$. \label{14}

\end{enumerate}
\end{thm}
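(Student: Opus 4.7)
The plan is to arrange the four conditions in a cycle that routes through the two deep tools already available: Proposition \ref{main11} (the general equivalence between integral, pointwise a.e., and ergodic-integral versions of the averaged limit) and Lemma \ref{maina} (which upgrades pointwise convergence on periodic points to uniform convergence under hypothesis \ref{ms2}). Concretely I would prove \ref{11}$\Rightarrow$\ref{13}, \ref{13}$\Rightarrow$\ref{12}, \ref{12}$\Rightarrow$\ref{11}, and then close the loop with \ref{11}$\Rightarrow$\ref{14}$\Rightarrow$\ref{12}.

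The implication \ref{11}$\Rightarrow$\ref{13} is immediate: uniform convergence of $(1/n)\log(f_n/e^{S_nh})$ to $0$ on $X$ lets one integrate against any $\mu\in M(X,\sigma_X)$ and use $\sigma_X$-invariance to replace $(1/n)\int S_nh\,d\mu$ by $\int h\,d\mu$; finiteness follows since $h\in C(X)$ is bounded on the compact $X$. The implications \ref{11}$\Rightarrow$\ref{14} (uniform implies pointwise) and \ref{14}$\Rightarrow$\ref{12} (periodic points are a subset) are trivial.

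For \ref{13}$\Rightarrow$\ref{12}, given a periodic point $x$ of period $p$, I would consider the ergodic invariant measure $\mu_x=(1/p)\sum_{i=0}^{p-1}\delta_{\sigma_X^i x}$ supported on the finite orbit of $x$. Since \ref{13} is precisely condition \ref{mm1} of Proposition \ref{main11}, the equivalence \ref{mm1}$\Leftrightarrow$\ref{mm3} there guarantees that
\[
\lim_{n\to\infty}\frac{1}{n}\log\!\left(\frac{f_n}{e^{S_nh}}\right)=0 \qquad \mu_x\text{-a.e.}
\]
Because $\mu_x(\{x\})=1/p>0$, this forces the pointwise limit to hold at $x$ itself. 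Thus \ref{12} holds on every periodic orbit.

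The implication \ref{12}$\Rightarrow$\ref{11} is exactly the content of Lemma \ref{maina}, which is applicable because \ref{ms2} is part of the standing hypothesis of the theorem and \ref{a0} plus tempered variation are assumed. Consequently the only real work has already been done in Lemma \ref{maina}; the main obstacle, namely promoting periodic-orbit control of $f_n/e^{S_nh}$ to uniform control on all of $X$, was handled there by the $u\mapsto(uw)^\infty$ closure trick from \ref{ms2}. The present theorem simply orchestrates that lemma together with Proposition \ref{main11} to cover all four formulations simultaneously.
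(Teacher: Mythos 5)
Your proposal is correct and follows essentially the same route as the paper: the implications $\ref{11}\Rightarrow\ref{13}\Rightarrow\ref{12}$ are handled via Proposition \ref{main11} (your explicit use of the ergodic periodic-orbit measure is exactly the intended specialization), and the substantive step back to uniform convergence is delegated to Lemma \ref{maina}, whose proof indeed establishes the norm statement in \ref{11} for the given $h$. The only cosmetic differences are the ordering of the cycle and that you verify $\ref{11}\Rightarrow\ref{13}$ by direct integration rather than citing Theorem \ref{cuneo}, which changes nothing of substance.
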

\begin{proof}
The implications 
``$\ref{11} \Longrightarrow \ref{13} \Longrightarrow \ref{12}$" are clear by applying Theorem \ref{cuneo} and Proposition \ref{main11}. To see 
``$\ref{12} \Longrightarrow \ref{14} \Longrightarrow \ref{11} $'', we apply 
 Lemma \ref{maina}. %The implication``$\ref{14} \Longrightarrow \ref{11}$''  is obtained by
%Lemma \ref{maina}.
%To see 
%``$\ref{12} \Longrightarrow \ref{11}$'', we apply 
 %Lemma \ref{maina}.
 \end{proof}

%If a subadditive sequence $\F$ satisfies \ref{ms1} and \ref{ms2} of Proposition \ref{mainaa} then it is asymptotically additive. %We now study if the the reverse implication holds. 
In the next theorem we study an equivalent condition for a subadditive sequence $\F$ to be an asymptotically additive sequence.
% when $(X, \sigma_X)$ is a irreducishift of finite type. %without assuming the lower bound condition \ref{a4}.
 
 \begin{thm} \label{mainas}
Let $(X,\sigma_X)$ be an irreducible subshift of finite type %with the weak specification property 
%and let $k$ be  a weak specification number.  
and $\F=\{\log f_n\}_{n=1}^{\infty}$ be a sequence on $X$ satisfying \ref{a0} with tempered variation. 
Then $\F$ is asymptotically additive on $X$ if and only if
the following two conditions \ref{mss1} and \ref{mss2} hold.

 \begin{enumerate}[label=(\roman*)]
\item There exists  $h\in C(X)$ such that  $$ \lim_{n\rightarrow\infty}\frac{1}{n}\log (\frac{f_n(x)}{e^{(S_nh)(x)}})=0$$
for every periodic point $x\in X$. \label{mss1}
\item 
\label{mss2}
There exist $k\in \N, c\geq 0$  and a sequence $\{M_n\}_{n=1}^{\infty}$  of positive real numbers  satisfying $\lim_{n\rightarrow\infty}(1/{n})\log M_n=0$ such that the following property (P) holds.\\

(P) For every $0<\epsilon<1$, there exists $N\in \N$ such that for given any $u\in B_n(X), n\geq N$, there exist $0\leq q\leq k$ and  $w\in B_q(X)$ such that
$z:=(uw)^{\infty}$ is a point in $X$ satisfying 
\begin{equation}\label{eqk}
f_{j(n+q)}(z)\geq (M_ne^{-cn\epsilon})^j (\sup\{f_n(x): x\in [u]\})^j
\end{equation}
for every $j\in \N$.

%$z:=(uw)^{\infty}\in X$, $w\in B_q(X)$, $0\leq q\leq k$ and $\tilde{N}\in \N$ such that 
%\begin{equation}\label{eqk}
%f_{j(n+q)}(z)\geq (C_ne^{-cn\epsilon})^j (\sup\{f_n(x): x\in [u]\})^j
%\end{equation}
%for every $j\geq\tilde{N}$, where $\tilde{N}$ depends on $\epsilon$ and $z$.
 
%at each periodic point $z$ of the period $n+k$ in X: 
%$z=(uw_u)^{\infty}$:
%for each $z =(z_i)_{i=1}^{\infty}\in X$ satisfying %such that 
 %$z_{(i-1)(n+k)+1}\dots z_{i(n+k)}=uw_u, 1\leq i\leq j$, $j\in \N$: %the following inequality holds:
%\item 
% Given $\epsilon>0$, there exist $k, N\in \N$ and a sequence of functions $\{g_n(\epsilon)\}_{n=1}^{\infty}$ 
 %such that the following inequality holds at each periodic point $z$ of the period $n+k$: %$z=(uw_u)^{\infty}$:
%for each $z =(z_i)_{i=1}^{\infty}\in X$ satisfying %such that 
 %$z_{(i-1)(n+k)+1}\dots z_{i(n+k)}=uw_u, 1\leq i\leq j$, $j\in \N$: %the following inequality holds:
% \begin{equation}\label{eqk}
%f_{j(n+k)}(z)\geq (C_ne^{cn})^j (\sup\{f_n(x): x\in [u]\})^j
%\end{equation}
%for all $n\geq N$. 
%where $$ \lim_{n\rightarrow\infty}\frac{1}{n+k}\log g_{n,\epsilon}=-2\epsilon.$$

\end{enumerate}
Theorem \ref{mainaa} holds if we replace Lemma \ref{maina} \ref{ms2} by the condition \ref {mss2} above.
\end{thm}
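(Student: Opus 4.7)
The plan is to prove the equivalence by invoking Theorem \ref{cuneo} for the forward direction and adapting the proof of Lemma \ref{maina} for the converse. The final assertion about Theorem \ref{mainaa} will follow from the latter adaptation as a direct corollary.

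For the forward direction, I would apply Theorem \ref{cuneo} to obtain $h\in C(X)$ with $(1/n)\|\log f_n - S_n h\|_\infty\to 0$, which immediately yields \ref{mss1}. To verify \ref{mss2}, I would exploit that an irreducible shift of finite type has the weak specification property with some constant $k$: for every $u\in B_n(X)$ there exist $0\le q\le k$ and $w\in B_q(X)$ such that $z:=(uw)^\infty\in X$ is a periodic point whose period divides $n+q$, and hence $S_{j(n+q)} h(z) = j S_{n+q} h(z)$. The estimate then splits into three controlled pieces: the asymptotic-additivity bound $|\log f_n - S_n h|\le n\epsilon'$ (valid for $n$ large depending on $\epsilon$); an oscillation bound $|S_n h(z)-S_n h(x)|\le \sum_{i=1}^n\mathrm{var}_i(h)=o(n)$ for $x\in [u]$ (by uniform continuity of $h$ together with a Ces\`aro argument); and the endpoint correction $|S_{n+q}h(z)-S_n h(z)|\le k\|h\|_\infty = O(1)$. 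Combining these should yield
\[
f_{j(n+q)}(z)\ge e^{-cjn\epsilon}\bigl(\sup_{x\in [u]} f_n(x)\bigr)^j
\]
for all $n\ge N_\epsilon$ and $j\in \N$, with $c$ an absolute constant; hence \ref{mss2} holds with $M_n\equiv 1$.

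For the converse, I would essentially repeat the proof of Lemma \ref{maina}, the only change being that its key inequality (\ref{eqk1}) is replaced by (\ref{eqk}). Propagating the extra factor $e^{-cjn\epsilon}$ through the same chain of estimates contributes only an additional term of order $cn\epsilon/(n+q)\le c\epsilon$ to the bounds on $(1/n)\log(f_n(z)/e^{S_n h(z)})$; the remaining machinery involving $M^h_n$, $C_h$, $M^\F_n$, $\bar C_h$, and the tempered variation of $\F$ is unaffected. Thus $(1/n)\|\log f_n - S_n h\|_\infty\to 0$, which gives asymptotic additivity. Since the proof of Theorem \ref{mainaa} invoked Lemma \ref{maina} only through this uniform convergence, the same substitution preserves all four equivalences of Theorem \ref{mainaa}.

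The main obstacle is the careful bookkeeping in the forward direction: $M_n$ must be chosen \emph{independently} of $\epsilon$ while $N$ is allowed to depend on $\epsilon$. This forces one to absorb the $\epsilon$-dependent $O(1)$ contributions (from $k\|h\|_\infty$ and the tail of $\sum_i \mathrm{var}_i(h)$) into the exponential term $e^{-cn\epsilon}$ by choosing $N_\epsilon$ large enough, which is precisely why the simple choice $M_n\equiv 1$ suffices.
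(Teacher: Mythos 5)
Your proposal is correct and follows essentially the same route as the paper: the forward direction uses Theorem \ref{cuneo} together with the weak specification construction of the periodic point $(uw)^{\infty}$, and the converse reruns the proof of Lemma \ref{maina} with (\ref{eqk1}) replaced by (\ref{eqk}), the extra factor $e^{-cjn\epsilon}$ only shifting the bounds by $O(c\epsilon)$. The sole (harmless) difference is bookkeeping: the paper keeps the tempered-variation constants $M^{h}_n$, $M^{\F}_n$ inside $M_n$ and takes $c=2$, whereas you absorb those $o(n)$ terms into $e^{-cn\epsilon}$ by enlarging $N_\epsilon$ and take $M_n\equiv 1$ with a larger absolute $c$.
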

\begin{rem}
Theorem \ref{mainas} \ref{mss2} is a generalization of   Lemma \ref{maina} \ref{ms2}. If we set $c=0$ in (\ref{eqk}), we obtain  (\ref{eqk1}).

 %[An asymptotically additive sequence $\F$ does not imply Theorem \ref{mainas}] \ref{mss2}
\end{rem}
\begin{proof}
Assume that $\F$ is asymptotically additive. Then \ref{mss1} is obvious and for a given $0<\epsilon <1$ there exists $N\in\N$ such that for all $n\geq N$
\begin{equation}\label{pas}
e^{-n\epsilon+(S_nh)(x)}<f_n(x)<e^{n\epsilon+(S_nh)(x)}
\end{equation}
for all $x\in X$. %Let $n\geq N$.  
Since $(X,\sigma_X)$ is an irreducible shift of finite type, let $k$ be a weak specification number. Then  for $x_1\cdots x_n\in B_n(X),  n\geq N$, there exists $w\in B_q(X), 0\leq q\leq k$ such  that $y^*:=(x_1, \dots, x_n, w)^{\infty}\in X$. %Since \ref{mss1} holds, l
Let  %$N(y^{*}), 
$ \bar{C}_h, M^{h}_n$ and $M^{\F}_n$ be defined as in the proof of Lemma \ref{maina}. %Let $\overline{N}:=\max\{\tilde N, N(y^{*})\}$. 
Then for any $z\in [x_1\dots x_n], j\in  \N$, %$h$ has tempered variation, 
\begin{equation*}
\begin{split}
f_{(n+q)j}(y^{*})&\geq e^{-j(n+q)\epsilon+(S_{j(n+q)}h)(y^{*})}
\geq e^{-j(n+q)\epsilon}\cdot\left(\frac{1}{M^{h}_n}e^{(S_{n}h)(z)}e^{\bar{C}_h}\right)^j\\
&\geq \left(\frac{1}{M^{h}_n}e^{-2\epsilon n-k\epsilon+\bar{C}_h}\right)^j f_n^j(z)
\geq \left(\frac{1}{M^{h}_n}e^{-2\epsilon n-k+\bar{C}_h}\right)^j f_n^j(z).
\end{split}
\end{equation*}
Setting $c=2$ and $M_n= e^{-k+\bar{C}_h}M^{\F}_n/M^{h}_n$, we obtain \ref{mss2}.
Now we show the reverse implication. 
We slightly modify the proof of Lemma \ref{maina} by taking into account of the property (P).
We only consider the case when $c>0$. 
Let $C_h$ and $M^{h}_n$ be defined as in the proof of Lemma \ref{maina}.
Let $0<\epsilon<1$ be fixed. By \ref{mss2},  there exists $N'\in \N$ such that
$$ -\frac{3c}{2}\epsilon<\frac{1}{n+i}\log (e^{-nc\epsilon}M_n)<-\frac{c}{2}\epsilon, \frac{1}{n}\vert \log ({M^h_n}{e^{C_h}})\vert<\epsilon \textnormal{ and } \frac{n}{n+k}>\frac{1}{2}$$ 
for  all $n>N',  0\leq i\leq k$. 
 In the proof of Lemma \ref{maina}, %set $N_1=N'$ and let $N$ be defined as in 
 %\ref{mss2}. 
 define $N_2:=\max\{N, N'\}$.
  %$$\frac{1}{n+k}\vert \log(e^{-nc\epsilon}C_n)\vert<\frac{3c}{2}\epsilon \text{ and } 
%\frac{n}{n+k}>\frac{1}{2}$$
 %for all $n>N_1$. 
 %Define $\overline{N}:=\max\{\tilde{N}, N(y^{*})\}$. 
 %Taking $j\geq \overline{N}$ and
 Replacing $M_n$ by $e^{-nc\epsilon}M_n$ in the proof of Lemma \ref{maina}, we obtain that for any $x_1\dots x_n\in B_n(X), n\geq N_2, z\in [x_1\dots x_n]$
$$ \frac{1}{n} \log \left(\frac{f_{n}(z)}{e^{(S_{n}h)(z)}}\right)<(4+3c)\epsilon.$$
 Using the latter part of the proof of Lemma \ref{maina}, we obtain the results.

\end{proof}

%It is clear that if $\F$ is asymptotically additive, then there exists $h\in C(X)$
%$\lim_{n\rightarrow\infty}(1/{n})\log (f_n(x)/e^{(S_nh)(x)})=0$
%for every $x \in X$. 
%Under which conditions, does 
%this imply that $\F$ is asymptotically additive? We will study the question in the next theorem. If $\F$ has a lower bound property in the sequence, then the 
%existence of such a function $h\in C(X)$ implies that  $\F$ is asymptotically additive.

 %\begin{thm} 
%Let $(X,\sigma_X)$ be a topologically mixing shift of finite type.  Let $\F=\{\log f_n\}_{n=1}^{\infty}$ be a subaddive sequence on $X$ satisfying \ref{a0} and  
%\ref{a1}. Then Theorem \ref{maina} holds and the two equivalent statements are also equivalent to  
%$$ \lim_{n\rightarrow\infty}\frac{1}{n}\log (\frac{f_n(x)}{e^{(S_nh)(x)}})=0$$
%for every $x \in X$. 
%\end{thm}

%\textbf{THM4?}
%\begin{coro}
%The two equivalent statements in Theorem \ref{mainot} are also equivalent to  
%the conditions \ref{mss1} and \ref{mss2} of Theorem \ref{mainas}.
 %\end{coro}
%\begin{proof}
%It is clear by Theorem \ref{mainot}.
%\end{proof}

\section{asymptotically additive sequences and subadditive sequences satisfying \ref{a0} and \ref{a1}}\label{prep}
In this section, we study the sequences $\F$ on subshifts $X$ with bounded variation satisfying \ref{a0} and \ref{a1}. %By \cite[Theorem]{Fe1}, it is known
 Since there exists a unique Gibbs equilibrium state $m$ for  such a sequence $\F$ (Theorem \ref{fengthem}), we study the condition for $m$ to be an invariant Gibbs measure %and a unique equilibrium state 
 for some continuous function. In Theorem \ref{cc}, we also characterize the form of sequences $\F$ in terms of  the  properties of equilibrium states.   

%An example of a sequence $\F$  with bounded variation satisfying \ref{a0} and \ref{a1} which is not asymptotically additive is found in \cite[Example]{Sh1} (see Example \ref{exshin} in Section \ref{apli}).

\begin{thm}\label{wk=g}
Let $(X,\sigma_X)$ be a subshift and $\F=\{\log f_n\}_{n=1}^{\infty}$ be a sequence on $X$ satisfying \ref{a0} and \ref{a1} with bounded variation. Let $m$ be the unique invariant Gibbs measure for $\F$. Then the following statements are equivalent.
\begin{enumerate} [label=(\roman*)]
%\item There exists $h\in C(X)$ such that 
%$$ \lim_{n\rightarrow\infty}\frac{1}{n}\int \log f_n d\mu=\int hd\mu$$
%for every $\mu\in M(X, \sigma_X).$ \label{wki}
\item There exists  $h\in C(X)$ such that 
\begin{equation*}
 \lim_{n\rightarrow\infty}\frac{1}{n}\log \left(\frac{f_n(x)}{e^{(S_nh)(x)}}\right)=0
 \end{equation*}
 for every $x\in X$.\label{wk0}
 \item  $\F$ is asymptotically additive on $X$. \label{wka}
  \item The measure $m$ is an invariant weak Gibbs measure for  a continuous function on $X$. %and a unique equilibrium state for a continuous function.  \label{wkw}

  %\item There exists $h\in C(X)$ for which $m$ is the unique equilibrium state and for which it is weak Gibbs.  \label{wkw}
 %\item There exists $h\in C(X)$ such that $m$ is the unique equilibrium state for $h$ which is weak Gibbs.  \label{wkw}
 \end{enumerate}
\end{thm}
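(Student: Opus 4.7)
The plan is to establish the cycle (ii) $\Longrightarrow$ (i) and (i) $\Longrightarrow$ (ii), together with the equivalence (ii) $\Longleftrightarrow$ (iii). For (ii) $\Longrightarrow$ (i), I invoke Cuneo's theorem (Theorem \ref{cuneo}), which turns asymptotic additivity into the uniform estimate $(1/n)\lVert\log f_n-S_nh\rVert_\infty\to 0$ and hence yields pointwise convergence everywhere. For (ii) $\Longrightarrow$ (iii), set $\epsilon_n:=(1/n)\lVert\log f_n-S_nh\rVert_\infty$ (so $\epsilon_n\to 0$); then $P(h)=P(\F)$ follows from the variational principle (Theorem \ref{VPS}) together with the integral convergence supplied by the uniform estimate, and combining with the Gibbs property of $m$ for $\F$ yields
\[
\frac{1}{Ce^{n\epsilon_n}}\leq\frac{m[x_1\dots x_n]}{e^{-nP(h)+(S_nh)(x)}}\leq Ce^{n\epsilon_n},
\]
so $m$ is a weak Gibbs measure for $h$. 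Conversely, for (iii) $\Longrightarrow$ (ii), given $m$ weak Gibbs for some $\phi\in C(X)$ with constants $D_n$ satisfying $(1/n)\log D_n\to 0$, dividing the weak Gibbs inequality for $\phi$ by the Gibbs inequality for $\F$ gives
\[
|\log f_n(x)-(S_n\phi)(x)-n(P(\F)-P(\phi))|\leq \log C+\log D_n=o(n)
\]
uniformly in $x\in X$, so setting $h:=\phi+(P(\F)-P(\phi))$ yields asymptotic additivity.

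The main work is (i) $\Longrightarrow$ (ii), which I carry out in two stages: first extracting a ``good'' cylinder via the Baire category theorem, and then propagating the bound using \ref{a1}. Fix $\epsilon>0$ and set $g_n(x):=(1/n)\log(f_n(x)/e^{(S_nh)(x)})$. Each $g_n$ is continuous, so the sets $E_N:=\{x\in X:|g_n(x)|\leq\epsilon\text{ for all }n\geq N\}$ are closed, and (i) gives $X=\bigcup_N E_N$. The Baire category theorem furnishes some $E_N$ with nonempty interior, and since cylinders form a basis for the topology on $X$, this interior contains a cylinder $[u_0]$ of some length $L$; equivalently, $|g_n(x)|\leq\epsilon$ for every $x\in[u_0]$ and every $n\geq N$.

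For the propagation step, given any $u\in B_n(X)$, applying \ref{a1} to the pair $(u_0,u)$ yields $0\leq q\leq p$, $w\in B_q(X)$, and some $y\in[u_0wu]$ with
\[
f_{L+q+n}(y)\geq D\sup f_L([u_0])\cdot\sup f_n([u]).
\]
Since $y\in[u_0]$ and $L+q+n\geq N$ for $n$ large, the bound on $[u_0]$ gives $|\log f_{L+q+n}(y)-(S_{L+q+n}h)(y)|\leq(L+q+n)\epsilon$. Decomposing $(S_{L+q+n}h)(y)=(S_Lh)(y)+(S_qh)(\sigma^Ly)+(S_nh)(\sigma^{L+q}y)$ and bounding the first two Birkhoff sums by $(L+p)\lVert h\rVert_\infty$, the upper half yields the uniform estimate $\log\sup f_n([u])\leq\sup_{z\in[u]}(S_nh)(z)+n\epsilon+O(1)$. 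For the matching lower bound, \ref{a0} gives $f_{L+q+n}(y)\leq e^{2C}f_L(y)f_q(\sigma^Ly)f_n(\sigma^{L+q}y)$ with $f_L,f_q$ uniformly bounded (as $L$ is fixed and $q\leq p$), so the lower half of $|g_{L+q+n}(y)|\leq\epsilon$ together with $\sigma^{L+q}y\in[u]$ gives $\sup f_n([u])\geq e^{\inf_{z\in[u]}(S_nh)(z)-n\epsilon-O(1)}$. Converting these cylinder-wise sup/inf bounds into pointwise estimates on $[u]$ via bounded variation of $\F$ (constant $M$) and the Ces\`aro estimate $(1/n)\sum_{j=1}^n\omega_h(2^{-j})\to 0$ for the modulus of continuity $\omega_h$ of $h$ yields $|g_n(x)|\leq\epsilon+o(1)$ uniformly in $x\in X$. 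Since $\epsilon>0$ was arbitrary, $(1/n)\lVert\log f_n-S_nh\rVert_\infty\to 0$, i.e., $\F$ is asymptotically additive.

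The hard part will be preventing the $O(1)$ boundary contributions from the length-$(L+q)$ prefix from absorbing the small parameter $\epsilon$; this succeeds because $L$ is a fixed constant (furnished by Baire via the good cylinder) and $q\leq p$ is uniformly bounded via \ref{a1}, so these boundary terms scale as $O(1/n)$ after division by $n$ and do not interfere with the target bound.
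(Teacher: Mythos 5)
Your proposal is correct, and the interesting part is that your proof of \ref{wk0} $\Rightarrow$ \ref{wka} follows a genuinely different route from the paper's. The equivalence of \ref{wka} with the weak Gibbs statement is essentially the paper's Lemma \ref{simplel}: same use of Cuneo's theorem to write $f_n=e^{S_nh+u_n}$, same comparison of the Gibbs inequality for $\F$ with the (weak) Gibbs inequality for the continuous potential, with the harmless difference that in the converse direction you read off asymptotic additivity directly from the uniform $o(n)$ bound rather than citing Cuneo again. For \ref{wk0} $\Rightarrow$ \ref{wka}, however, the paper's Lemma \ref{mainot} fixes a word $u$, iterates \ref{a1} to build nested cylinders $[uw_1u\dots uw_{j-1}u]$, and applies the pointwise hypothesis at the single point $x^{*}$ in their intersection (which depends on $u$ and need not be periodic), extracting the uniform bound from estimates along that orbit; you instead invoke the Baire category theorem once to produce a single ``good'' cylinder $[u_0]$ on which the convergence is uniform with a fixed threshold $N$, and then transport the estimate to an arbitrary cylinder $[u]$ by one application of \ref{a1} (for the lower bound on $\sup f_n([u])$ you would use the pair $(u_0,u)$ in that order so that the distinguished point lies in $[u_0]$ and its shift lies in $[u]$, exactly as you set it up) together with one application of \ref{a0} to chop off the bounded-length prefix; bounded variation of $\F$ and the Ces\`aro estimate on the modulus of continuity of $h$ then convert cylinder bounds into pointwise ones, just as the paper's $M$, $M^h_n$ do. Your bookkeeping of the error terms is right: $L$, $q\le p$, $D$, $\sup f_L([u_0])$ and the prefix Birkhoff sums are all $O(1)$ constants (depending on $\epsilon$ but not on $u$ or $n$), so they wash out after dividing by $n$. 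What each approach buys: the paper's construction avoids Baire and only uses the hypothesis at countably many specially constructed points, which is in the spirit of its periodic-point results (Lemma \ref{maina}, Theorem \ref{mainaa}); your Baire argument is shorter and avoids the iterated concatenation, at the cost of genuinely needing pointwise convergence on all of $X$ (a residual set would do), which is exactly what \ref{wk0} provides, so nothing is lost for this theorem. Minor points to polish, none of them gaps: the supremum over the compact cylinder $[u_0wu]$ is attained by continuity of $f_{L+q+n}$, and the case $q=0$ should be handled with the convention $f_0\equiv 1$, $S_0h\equiv 0$.
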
 
\begin{rem}\label{imp1}
%(1) It is not clear in general that the condition \ref{a1} implies the condition \ref{ms2} of Proposition \ref{maina}. 
%(1)Theorem \ref{wk=g} implies that if a sequence $\F$ satisfying  \ref{a0} and  \ref{a1} is also aysmptotically additive, then the uniform convergence of the sequence of functions $\{\frac{1}{n}\log \left(\frac{f_n}{e^{(S_nh)}}\right)\}_{n=1}^{\infty}$ is equivalent to pointwise convergence of the sequence of functions. (2) 
%Suppose a sequence $\F$ satisfies \ref{a0}, \ref{a1} and  Theorem \ref{mainas}\ref{mss1}.
%It is not clear in general if $\F$  satisfies  Lemma \ref{maina} \ref{ms2}, more generally Theorem \ref{mainas}\ref{mss2}. However 
(1)There exists a sequence $\F$ which satisfies   \ref{a0}, \ref{a1} with bounded variation satisfying Theorem \ref{wk=g} \ref{wka}. On the other hand, there exists a sequence $\F$ with bounded variation satisfying  \ref{a0} and \ref{a1} without being asymptotically additive (see Section \ref{apli}). (2) If $h\in C(X)$ in \ref{wk0} exists, then $m$ is a unique equilibrium state for $h$.

%(3) In Section, we will give an example of a sequence$\F$ on subshifts $(X, \sigma_X)$ with bounded variation satisfying \ref{a0} and \ref{a1} but not asymptptically additive. in Section \ref{}
\end{rem}

%Then there exists $h\in C(Y)$ for which $m$
%is an invariant weak Gibbs measure %which is the uand nique equilibrium state  for $f$ 
%if and only if  $\F$ is asymptotically additive.  If such $h\in C(Y)$ exists, $m$ is the unique equilibrium state for $h$.
%\end{thm}
To prove Theorem \ref{wk=g}, we apply the following lemmas. We continue to use $\F$ and  $m$ defined as in Theorem \ref{wk=g}. In the next lemma we first study the relation between Theorem \ref{wk=g} \ref{wk0} and \ref{wka}. 
%For a sequence $\{D_{n,m}\}_{(n,m)\in \N \times \N}$ in  \ref{a4}, 
%We need the following  
%condition A.\\

%\textbf{[Condition A]} Let  $\{D_{n,m}\}_{(n,m)\in \N \times \N}$ and $p$ be defined as in  \ref{a4}. There exists $\tilde N\in \N$ such that $(1/{m})\vert \log D_{mj+kj,m}\vert<\epsilon$ for each $j,m\geq \tilde N$. 

%Given $\epsilon >0$,  there exists $N\in\N$ such that $(1/{m})\vert \log D_{n,m}\vert <\epsilon$ for all $n,m>N$.

\begin{lemma} \label{mainot}
Let $(X,\sigma_X)$ be a subshift and $\F=\{\log f_n\}_{n=1}^{\infty}$ be a sequence on $X$ satisfying \ref{a0}  and  \ref{a1} with bounded variation. %Suppose in addition that Condition A holds.
If there exists  $h\in C(X)$ such that 
\begin{equation}\label{strong}
 \lim_{n\rightarrow\infty}\frac{1}{n}\log \left(\frac{f_n(x)}{e^{(S_nh)(x)}}\right)=0
 \end{equation}
for every $x \in X$, then $\F$ is asymptotically additive on $X$.
 \end{lemma}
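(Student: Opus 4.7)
The plan is to deduce uniform convergence $\lim_{n\to\infty}(1/n)\|\log(f_n/e^{S_n h})\|_\infty = 0$ from the pointwise hypothesis; by (\ref{defasi}), and in view of Theorem \ref{cuneo}, this statement is precisely the asymptotic additivity of $\F$ with witness $h$. The strategy parallels that of Lemma \ref{maina}, but in place of a genuine periodic point $(uw)^\infty$ -- which condition \ref{a1} alone does not produce on a general subshift -- we construct an explicit (not necessarily periodic) point $z_u \in X$ on which the pointwise hypothesis is applied.

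Suppose for contradiction that the uniform conclusion fails. Then there exist $\epsilon>0$, indices $n_k\to\infty$ and points $x_k \in X$ with $|\log f_{n_k}(x_k) - (S_{n_k}h)(x_k)| > n_k\epsilon$. Treat the case $\log f_{n_k}(x_k) - (S_{n_k}h)(x_k) > n_k\epsilon$ first; the opposite sign is handled symmetrically, with subadditivity from \ref{a0} supplying an upper bound in place of the lower bound from \ref{a1}. Let $u_k \in B_{n_k}(X)$ be the initial $n_k$-block of $x_k$, so that $x_k \in [u_k]$. Iterate condition \ref{a1} with $u=v=u_k$, producing gap words $w_1, w_2, \ldots$ each of length $q_i\leq p$. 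The nested non-empty cylinders $[u_k w_1 u_k w_2 \cdots w_j u_k]$ have non-empty intersection by compactness of $X$, yielding an explicit point $z_k = u_k w_1 u_k w_2 u_k \cdots \in X$. Bounded variation of $\F$ transfers the iterated supremum inequality from the admissible block to the single point $z_k$:
\[
f_{N_{j,k}}(z_k) \;\geq\; \frac{D^{j}}{M}\bigl(\sup_{[u_k]} f_{n_k}\bigr)^{j+1}, \qquad N_{j,k} := (j+1)n_k + \sum_{i=1}^{j}q_i,
\]
for every $j\in\N$.

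Applying the pointwise convergence hypothesis at the fixed point $z_k$, for any $\delta>0$ and all $j$ sufficiently large (depending on $k$) we have $\log f_{N_{j,k}}(z_k) \leq N_{j,k}\delta + (S_{N_{j,k}}h)(z_k)$. The block structure of $z_k$ places each shift $\sigma^{\tau_l}z_k$ (with $\tau_l$ the start of the $l$-th copy of $u_k$) in the cylinder $[u_k]$ alongside $x_k$, so the tempered growth of $M^h_{n}$, defined as in (\ref{temph}), gives
\[
(S_{N_{j,k}}h)(z_k) \;\leq\; (j+1)\bigl((S_{n_k}h)(x_k) + \log M^h_{n_k}\bigr) + jp\|h\|_\infty.
\]
Combining the two estimates, dividing by $j+1$, and letting $j\to\infty$ produces
\[
\log \sup_{[u_k]} f_{n_k} \;\leq\; (S_{n_k}h)(x_k) + (n_k+p)\delta + \log M^h_{n_k} + p\|h\|_\infty - \log D.
\]
Inserting $\log \sup_{[u_k]} f_{n_k} \geq \log f_{n_k}(x_k) > n_k\epsilon + (S_{n_k}h)(x_k)$, dividing by $n_k$, and finally letting $k\to\infty$ (using $(1/n_k)\log M^h_{n_k} \to 0$ by uniform continuity of $h$ on the compact space $X$) yields $\epsilon \leq \delta$; since $\delta>0$ was arbitrary this contradicts $\epsilon>0$.

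The principal technical hurdle is the construction of the point $z_k$ so that the iterated form of \ref{a1} delivers a simultaneous lower bound on $f_{N_{j,k}}(z_k)$ for every $j$; this demands both bounded variation of $\F$ and the nested-cylinders compactness argument to pass from supremum estimates on admissible blocks to a single infinite point of $X$ (the fact that the gap words $w_i$ can differ from step to step is precisely why the periodic-point route through Lemma \ref{maina} is not directly available here). The control of the Birkhoff sum $(S_{N_{j,k}}h)(z_k)$ via the tempered growth of $M^h_n$ is a secondary but necessary ingredient, and is immediate from the uniform continuity of $h\in C(X)$.
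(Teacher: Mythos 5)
Your proof is correct and follows essentially the same route as the paper's: iterating \ref{a1} with the block against itself, using nested cylinders (Cantor/compactness) to produce a single point on which the pointwise hypothesis is applied, transferring estimates via bounded variation of $\F$ and the tempered variation of $S_n h$, and invoking \ref{a0} for the opposite sign. The only differences are presentational (argument by contradiction along a sequence $n_k$, and letting $j\to\infty$ to clean up constants, rather than the paper's direct estimate for all large $n$ with a fixed large $j$).
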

\begin{rem}
Lemma \ref{mainot} implies that if $\F$ satisfies the assumptions of the lemma 
 then uniform convergence of the sequence of functions $\{\frac{1}{n}\log \left(\frac{f_n}{e^{(S_nh)}}\right)\}_{n=1}^{\infty}$ is equivalent to pointwise convergence of the sequence of functions. %(2) If $\F$ satisfies \ref{a1}, Condition A is satisfied. If $\F=\{\log f_n\}_{n=1}^{\infty}$ on a subshift $X$ with the weak specification property and $f_n=e^{S_nh}$ for some $h\in C(X)$,
% then $\F$ satisfies \ref{a4} and Condition A.
%(3) If $\F$ is asymptotically additive on $X$, then 
%$ \lim_{n\rightarrow\infty}({1}/{n})\int \log f_n d\mu>-\infty$ for all $\mu\in M(X, \sigma_X)$.
\end{rem}
%It is not clear in general that if we take a periodic point, the condition \ref{a1} implies the condition \ref{ms2} in Proposition \ref{maina}. Propostion \ref{mainot} states that $\F$ is aysmptoically additive if and only if the sequence of functions $\{G_n\}_{n=1}^{\infty}$ where $G_n(x):=\frac{1}{n}\log \left(\frac{f_n(x)}{e^{(S_nh)(x)}}\right)$ converges pointwise to 0. Hence in this case pointwise convergence is equivalent to the uniform convergence.
%\end{rem}
\begin{proof}
Let $\epsilon >0$. It is enough to show that there exists $N\in \N$ such that for any $z \in [u], u\in B_n(X), n>N$, 
\begin{equation*}
-\epsilon<\frac {1}{n}\log \Big (\frac{f_n (z)}{e^{(S_n h) (z)}}\Big)<\epsilon.
\end{equation*}
Let $p$ be defined as in \ref{a1}.
Let $\overline{m}_h:=\max_{0\leq l \leq p}\{e^{(S_{l}h)(x)}: x\in X\}$, where $(S_0h)(x):=1$ for every $x\in X$.  Since $h$ has tempered variation, let $M^{h}_n$ be defined as in (\ref{temph}). 
Let $M$ be a constant defined as in the definition of bounded variation and $D$ be defined as in \ref{a1}. 
Then there exists $N_1\in \N$ such that 
\begin{equation}\label{use1}
\frac{1}{n} \log M <\epsilon, \frac{1}{n} \log M^{h}_n <\epsilon,  \frac{1}{n}\vert \log \frac{1}{\overline{m}_h}\vert<\epsilon,  \frac{1}{n}\vert \log D\vert<\epsilon  \text{ and }
\frac{n}{n+p}>\frac{1}{2}.
\end{equation}
for all $n >N_1$. Take $n>N_1$.
The condition \ref{a1} implies that for a given $u \in B_{n}(X)$,   there exists $w_1\in B_{l_1}(X), 0\leq l_1\leq p$ such that for any $x\in [uw_1u], z\in[u]$
\begin{equation*}
\sup\{f_{2n+l_1}(x): x\in  [uw_1u]\} \geq D
\left(\sup\{f_{n}(x): x\in [u]\}\right)^2\geq D
f^2_{n}(z).
\end{equation*}
Repeating this,  given $j\geq 2, u \in B_{n}(X)$,   there exist allowable words $w_i$ of length $l_i$, $1\leq i\leq j-1, 0\leq l_i\leq p,$
such that $uw_1uw_2u\dots uw_{j-1}u$ is an allowable word of length $jn+\sum_{i=1}^{j-1}l_i$ satisfying that for any $x\in [uw_1uw_2u\dots uw_{j-1}u]$ and $z\in [u]$
\begin{equation}\label{eq11}
Mf_{jn+\sum_{i=1}^{j-1}l_i}(x)\geq \sup\{f_{jn+\sum_{i=1}^{j-1}l_i}(x): x\in [uw_1uw_2u\dots uw_{j-1}u]\}%\geq \left(\frac{D_{(j-1)n+l_1+\dots l_{j-2},n}}{M_n}\right)^{j-1}
%\left(\sup\{f_{n}(x): x\in [u]\}\right)^j
\geq  D^{j-1}
{f_{n}(z)}^j.
\end{equation}
By the additivity of the sequence $\{e^{S_nh}\}_{n=1}^{\infty}$, 
\begin{equation}\label{eq12}
e^{(S_{jn+\sum_{i=1}^{j-1}l_i}h)(x)}\leq 
\left(M^{h}_ne^{S_{n}h(z)}\right)^j{\overline{m}_h}^{j-1}.
\end{equation}
Hence by (\ref{eq11}) and (\ref{eq12}) we obtain for $j\geq 2$, $x\in [uw_1uw_2u\dots uw_{j-1}u]$ and $z\in [u]$, 
\begin{equation}\label{eq13}
\frac{f_{jn+\sum_{i=1}^{j-1}l_i}(x)}{e^{(S_{jn+\sum_{i=1}^{j-1}l_i}h)(x)}}\geq \left(\frac{1}{M^{h}_n}\right)^{j} \left(\frac{f_{n}(z)}{e^{(S_{n}h)(z)}}\right)^{j} \left(\frac{D}{\overline{m}_h}\right)^{j-1}\cdot \frac{1}{M}. 
\end{equation}

Let $c_1=[u w_1u],\dots, c_i=[uw_1 uw_2u\dots u w_i u], i\in\N$.
Then Cantor's intersection theorem $\cap_{i\in\N} c_i\neq \emptyset$ and it consists of exactly one point in $X$. We call it $x^{*}\in X$. Note that $x^{*}$ may not be a periodic point. 
 For each $y\in X$, define $A_n(y):=f_{n}(y)/e^{(S_{n}h)(y)}$.
 By the assumption (\ref{strong}), there exists $t(x^{*})\in \N$,  which depends on $x^{*}$ such that for all $i\geq t(x^{*})$,
\begin{equation*}
-\epsilon<\frac {1}{i}\log  A_i(x^{*})<\epsilon.
\end{equation*}

Letting $s(u, j):=\sum_{i=1}^{j-1}l_i$, for $j\geq t(x^{*})\geq 2$, and using (\ref{use1}) and (\ref{eq13})
we obtain
\begin{equation*}
\begin{split}
&\epsilon>\frac{1}{jn+s(u, j)} \log A_{jn+s(u, j)} (x^{*}) \\
&\geq \frac{1}{n+\frac{1}{j}s(u,j)}\log \frac{1}{M^h_n}+\frac{1-\frac{1}{j}}{n+\frac{1}{j}s(u,j)}
\log \frac{1}{\overline{m}_h}+ \frac{1}{jn+s(u, j)}\log \frac{1}{M} \\
&+  \frac{n(j-1)}{jn+s(u,j)}\cdot \frac{1}{n}
\log D+\frac{n}{n+ \frac{1}{j}s(u,j)}\cdot\frac{1}{n}\log A_{n}(z).\\
\end{split}
\end{equation*}
%\frac{1}{n + p}\log \left(\frac{1}{M^{h}_n}\right) + \frac{j-1}{jn + jp}\log \left(\frac{D}{M\bar{m_h}}\right)
%+ \frac{1}{n+ p}\log A_{n}(z).
%\end{split}
%\end{equation*}
%Assume that $\log A_{n}(y^{*})\geq 0.$
Without loss of generality, assume $\log A_{n}(z)> 0.$
By a simple calculation, we obtain  that 
\begin{equation}\label{r1}
  \frac{1}{n}\log A_{n}(z)<10\epsilon
\end{equation}
for all $n>N_1, z\in [u]$, for any $u\in B_n(X)$.

 Next we will show that there exists $N_2\in \N$ such that
 \begin{equation}\label{r2}
-6\epsilon < \frac{1}{n}\log A_{n}(z)
\end{equation}
for all $n>N_2, z\in [u]$ for any $u\in B_n(X)$. 
Define $f_{0}(x):=1$. % and $(S_0h)(x):=1$ for all $x\in X$.  
Let $\overline{M}:=\max_{0\leq i\leq p}\{ f_{i}(x):x\in X\}$ and 
$\overline{m}_1:= \min_{0\leq k\leq p} \{e^{(S_k h) (x)}: x \in X\}.$
Take $N_2$ so that
\begin{equation}\label{use2}
\frac{1}{n}\vert \log (MM^h_n)\vert<\epsilon,
\frac{1}{n}\vert \log \left(\frac{\overline{M} e^{2C}}{\overline{m}_1}\right) \vert<\epsilon, \frac{n}{n+p}>\frac{1}{2}. %\text{ and }  \frac{1}{n+2p}<\frac{1}{2}
\end{equation}
 for all $n>N_2$. For $n>N_2$,
let $u\in B_n(X)$. Construct $x\in [uw_1uw_2\dots uw_{j-1}u], j\geq 2,$ as in the above argument and let $z\in [u]$.
Using \ref{a0}, it is easy to obtain for each $j\geq 2$
\begin{equation}\label{use3}
f_{jn+\sum_{i=1}^{j-1}l_i}(x)\leq (\overline{M}e^{2C})^{j-1}(Mf_n(z))^j
\end{equation}
and 
\begin{equation}\label{use4}
e^{(S_{jn+\sum_{i=1}^{j-1}l_i}h)(x)}\geq \left(\frac{e^{(S_{n}h)(z)}}{M^h_n}\right)^{j}(\overline{m}_1)^{j-1}.
\end{equation}
Define $x^{*}\in X$ as before. For all $j\geq t(x^{*})$, by using (\ref{use2}), (\ref{use3}) and (\ref{use4}), %the subaddivity of the sequence $\{\log g_n\}_{n=1}^{\infty}$ and
%the additivity of the sequence $\{S_n h\}_{n=1}^{\infty}$, 
we obtain
\begin{equation*}
%\begin{split}
-\epsilon< \frac{1}{jn+s(u, j)} \log A_{jn+s(u, j)} (x^{*})
< 2\epsilon +\frac{n}{n+\frac{1}{j}s(u,j)}\cdot \frac{1}{n}\log A_{n}(z).
%leq \frac{1}{n+p(1-\frac{1}{j})}\log (M_nM^h_n)+
%\frac{1-\frac{1}{j}}{n+p(1-\frac{1}{j})}\log 
%\left(\frac{\bar{M} e^{2C}}{\bar{m_1}}\right) +\frac{1}{n+p(1-\frac{1}{j})}\log A_{n}(z)\\
%&<2\epsilon +\frac{1}{n+p(1-\frac{1}{j})}\log A_{n}(z)
%\end{split}
\end{equation*}
Without loss of generality, assuming that  $\log A_{n}(z)<0$, we obtain (\ref{r2})
%\begin{equation*}
%-\frac{3}{1+p}\epsilon< \frac{1}{n}\log A_{n}(z)\\
%\end{equation*}
%Hence we obtain (\ref{r2})
for all $n>N_2$, each $z\in [u], u\in B_n(X)$.
The result follows by (\ref{r1}) and (\ref{r2}).
\end{proof}

\begin{lemma} \label{simplel}
Under the assumptions of Theorem \ref{wk=g},  $\F$ is asymptotically additive if and only if 
 there exists a continuous function for which 
$m$ is an invariant weak Gibbs measure. %and a unique equilibrium state. 
\end{lemma}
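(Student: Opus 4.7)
The plan is to exploit the fact that $m$ already comes with a quantitative \emph{uniform} Gibbs bound for $\F$ (a constant $C$, by Theorem \ref{fengthem}), and to translate any comparison between $f_n$ and $e^{S_n h}$ into a comparison between the two Gibbs-type descriptions of the same measure $m$.

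For the forward implication, assume $\F$ is asymptotically additive. Apply Theorem \ref{cuneo} to produce $h\in C(X)$ with
\[
\varepsilon_n := \tfrac{1}{n}\lVert \log f_n - S_n h\rVert_\infty \longrightarrow 0.
\]
Let $C$ be the Gibbs constant for $m$ with respect to $\F$ and set $C_n := C\,e^{n\varepsilon_n}$, so that $(1/n)\log C_n\to 0$. Inserting the pointwise bound $e^{-n\varepsilon_n}e^{S_n h(x)}\le f_n(x)\le e^{n\varepsilon_n}e^{S_n h(x)}$ into the Gibbs bound
\[
\tfrac{1}{C}\,e^{-nP(\F)}f_n(x)\;<\;m[x_1\dots x_n]\;<\;C\,e^{-nP(\F)}f_n(x)
\]
gives
\[
\tfrac{1}{C_n}\,e^{-nP(\F)}e^{S_n h(x)}\;<\;m[x_1\dots x_n]\;<\;C_n\,e^{-nP(\F)}e^{S_n h(x)}.
\]
It remains to identify $P(\F)$ with $P(h)$. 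By Proposition \ref{main11} applied to $\F$ (which satisfies \ref{a0}), $\lim_n (1/n)\int\log f_n\,d\mu = \int h\,d\mu$ for every $\mu\in M(X,\sigma_X)$, and the variational principle (Theorem \ref{VPS}) then yields $P(\F)=P(h)$. Hence $m$ is an invariant weak Gibbs measure for $h\in C(X)$.

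For the backward implication, assume $m$ is an invariant weak Gibbs measure for some $g\in C(X)$ with weak Gibbs constants $C'_n$, $(1/n)\log C'_n\to 0$. Write down both descriptions of $m[x_1\dots x_n]$ and divide them to cancel the measure:
\[
\frac{1}{CC'_n}\;<\;\frac{e^{-nP(\F)}f_n(x)}{e^{-nP(g)}e^{S_n g(x)}}\;<\;CC'_n.
\]
Taking logarithms and rearranging gives
\[
\bigl|\log f_n(x)-S_n g(x)-n(P(\F)-P(g))\bigr|\;\le\;\log(CC'_n)
\]
uniformly in $x\in X$. Set $h := g + (P(\F)-P(g))\in C(X)$; then $S_n h = S_n g + n(P(\F)-P(g))$, so $(1/n)\lVert \log f_n - S_n h\rVert_\infty \to 0$, which by the definition (\ref{defasi}) means that $\F$ is asymptotically additive (take $\rho_\varepsilon = h$ for every $\varepsilon>0$).

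The only subtle point in either direction is reconciling the pressures associated with $\F$ and with the continuous function: in the forward direction this is handled by the variational principle together with Proposition \ref{main11}, while in the backward direction the potential discrepancy $P(\F)-P(g)$ is simply absorbed into a constant shift of $g$. Everything else reduces to manipulating the Gibbs and weak Gibbs inequalities, using that $(1/n)\log C_n\to 0$ for any prefactor $C_n$ that grows subexponentially.
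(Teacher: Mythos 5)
Your proof is correct and follows essentially the same route as the paper: in the forward direction you substitute Cuneo's uniform approximation into the Gibbs bound for $\F$ and identify $P(\F)=P(h)$ via the variational principle, and in the backward direction you divide the two Gibbs-type descriptions of $m$ and absorb the pressure discrepancy into a constant shift of the potential, exactly as in the paper's argument (the paper cites Cuneo's theorem at that last step, while you correctly note the bound already gives asymptotic additivity directly from the definition). The only cosmetic remark is that the appeal to Proposition \ref{main11} is unnecessary, since the uniform convergence $\varepsilon_n\to 0$ already gives $\lim_n(1/n)\int\log f_n\,d\mu=\int h\,d\mu$ for every invariant $\mu$.
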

\begin{proof}
Suppose $\F$ is asymptotically additive. Then by \cite[Theorem 1.2]{Cu} there exist $h, u_n\in C(X)$, $n\in\N$ such that $f_n(x)=e^{(S_nh)(x)+u_n(x)}$ satisfying $\lim_{n\rightarrow \infty}({1}/{n})\vert \vert u_n \vert\vert_{\infty}  =0$. %Let $\mu$ be the unique Gibbs equilibrium state for $\F$. 
Since there exist a constant $C>0$  such that
\begin{equation}\label{forg}
\frac{1}{C} \leq \frac{m [x_1\dots x_n]}{e^{-nP(\F)}f_n(x)} \leq {C}
\end{equation}
for each $x\in  [x_1\dots x_n]$,
replacing $f_n (x)$ by $e^{(S_nh)(x)+u_n(x)}$, we obtain
\begin{equation*}
\frac{1}{Ce^{\vert \vert u_n \vert \vert _{\infty}}}\leq \frac{e^{u_n(x)}}{C}
\leq \frac{m [x_1\dots x_n]}{e^{-nP(\F)+(S_nh)(x)}} \leq Ce^{u_n(x)}\leq Ce^{\vert \vert u_n \vert \vert _{\infty}}.
\end{equation*}
Set $A_n=Ce^{\vert \vert u_n \vert \vert _{\infty}}$. 
Since $ \lim_{n\rightarrow\infty}(1/n)\int \log f_n d\mu=\int hd\mu$
for every $\mu\in M(X, \sigma_X)$, we obtain that  $P(\F)=P(h)$. %It is clear $m$ is an invariant weak Gibbs measure for $h$ and the unique equilibrium state for $h$. 
Conversely,
assume that $m$ is an invariant weak Gibbs measure % and the unique equilibrium state 
for $\tilde h\in C(X)$. %then $\F$ is asymptotically additive. The measure $m$ is the unique equilibrium state for  $h$.
%\end{lemma}
%\begin{proof}
%Since $m$ is an invariant weak Gibbs measure for $\F$, 
Hence there exists $C_n>0$ such
\begin{equation}\label{wk}
\frac{1}{C_n} \leq \frac{m [x_1\dots x_n]}{e^{-nP(\tilde{h})+(S_n\tilde{h})(x)}}\leq {C_n}
\end{equation}
for all $x \in [x_1\dots x_n]$, where $\lim_{n\rightarrow\infty}(1/n)\log C_n=0$.
%Since $\F$ and $h$ have  the same equilibrium states and it is known that $\F$  has a unique Gibbs equilibrium state, the unique equilibrium state is $\mu$. Hence there exists constants $C>0$  such that
Since $m$ is the Gibbs measure for $\F$, 
\begin{equation}\label{sg}
\frac{1}{C} \leq \frac{m [x_1\dots x_n]}{e^{-nP(\F)}f_n(x)} \leq C
\end{equation}
for some $C>0$. Using (\ref{wk}) and (\ref{sg}), we obtain 
\begin{equation} \label{property}
\frac{1}{C_nC}\leq  \frac{f_n(x)}{e^{(S_n({\tilde h-P(\tilde h)+P(\F))})(x)}}\leq C_nC,
\end{equation}
%where $h=\tilde h-P(\tilde h)+P(\F)$,
for all $x \in [x_1\dots x_n]$.
Hence by \cite[Theorem 1.2]{Cu} $\F$ is an asymptotically additive sequence. %$$\lim_{n\rightarrow\infty}\frac{1}{n}\vert \vert \log \frac{f_n}{e^{S_n(y)}}\vert \vert_{\infty}=0.$$
\end{proof}
%\textbf{Proof of Theorem  \ref{wk=g}}
%Applying two lemmas in the above, we obtain the results.
\textbf{Proof of Theorem \ref{wk=g}.}
By \cite[Theorem 1.2]{Cu}, \ref{wka} implies \ref{wk0}. Theorem \ref{wk=g} follows by Lemma \ref{mainot} and Lemma \ref{simplel}.\\

%Suppose that a sequence $\F$ on a subshift $X$ satisfying \ref{a0} and \ref{a1} with bounded variation is  asymptotically additive. 
%In the next proposition, we study some relations between a typical form of $\F$  and properties of equilibrium states for $\F$.
%the relation between the equilibrium state for $\F$ and the typical form of the sequence $\F$.   

\begin{thm} \label{cc}
Let $(X,\sigma_X)$ be a subshift and $\F=\{\log f_n\}_{n=1}^{\infty}$ be a sequence on $X$ satisfying \ref{a0} and \ref{a1} with bounded variation. 
Let $m$ be the unique invariant Gibbs measure for $\F$.  
Suppose that one of the equivalent statements in Theorem \ref{wk=g} holds. % and let $h$ be defined as in Theorem \ref{wk=g} \ref{wk0}.
Then the following statements hold.
%If $m$ is the equilibrium state for $h\in C(X)$ in Theorem \ref{wk=g} \ref{wkw},
\begin{enumerate} [label=(\roman*)]
%\item There exists $h\in C(X)$ such that 
%$$ \lim_{n\rightarrow\infty}\frac{1}{n}\int \log f_n d\mu=\int hd\mu$$
%for every $\mu\in M(X, \sigma_X).$ \label{wki}
\item 
 There exits a sequence $\{C_{n, m}\}_{n,m\in\N}$ such that \label{estim}

\begin{equation} \label{fwg}
\frac{1}{C_{n,m}}\leq \frac{f_{n+m}(x)}{f_n(x) f_{m}(\sigma^n_X x)}\leq C_{n,m},
\textnormal{ where} \lim_{n\rightarrow \infty}\frac{1}{n}\log C_{n,m}= \lim_{m\rightarrow \infty}\frac{1}{m}\log C_{n,m}=0.
\end{equation}

\item If $m$ is a Gibbs measure for a continuous function, then $\F$ is an almost additive sequence on $X$. \label{qb}
%Theorem \ref{wk=g} \ref{wkw}.
%$\lim_{n\rightarrow \infty} \frac{1}{n}\int f_n d\mu=\int h d\mu$ for every $\mu\in M(X, \sigma_X)$ 
\end{enumerate}
Hence if there is no sequence $\{C_{n, m}\}_{n,m\in\N}$ satisfying (\ref{fwg}), then there exists no continuous function for which $m$ is an invariant weak Gibbs measure. 
\end{thm}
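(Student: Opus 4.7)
The plan is to leverage Cuneo's theorem (Theorem~\ref{cuneo}) to pass from the asymptotic additivity of $\F$ (granted by the hypothesis and Theorem~\ref{wk=g}) to an explicit additive-plus-small-error representation, and then to derive both \ref{estim} and \ref{qb} by straightforward algebra combined with the Gibbs/weak Gibbs comparisons already used inside Lemma~\ref{simplel}.

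For part \ref{estim}, by Cuneo's theorem there exist $h\in C(X)$ and $u_n\in C(X)$ with $f_n=e^{S_nh+u_n}$ and $\lim_{n\to\infty}(1/n)\lVert u_n\rVert_\infty=0$. Because $h$ is a genuine continuous function, the Birkhoff sums obey the exact cocycle identity $S_{n+m}h(x)=S_nh(x)+S_mh(\sigma_X^nx)$, so
\begin{equation*}
\frac{f_{n+m}(x)}{f_n(x)f_m(\sigma_X^nx)}=\exp\bigl(u_{n+m}(x)-u_n(x)-u_m(\sigma_X^nx)\bigr).
\end{equation*}
Setting $C_{n,m}:=\exp(\lVert u_{n+m}\rVert_\infty+\lVert u_n\rVert_\infty+\lVert u_m\rVert_\infty)$ yields (\ref{fwg}). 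For the limit condition, $(1/n)\log C_{n,m}$ is the sum of three terms: $(n+m)/n\cdot(1/(n+m))\lVert u_{n+m}\rVert_\infty\to 0$ as $n\to\infty$ (with $m$ fixed) since $n/(n+m)\to 1$, $(1/n)\lVert u_n\rVert_\infty\to 0$ by Cuneo, and $(1/n)\lVert u_m\rVert_\infty\to 0$ because the numerator is a fixed constant. The symmetric argument gives the limit in $m$.

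For part \ref{qb}, I mimic the second half of the proof of Lemma~\ref{simplel}. If $m$ is a Gibbs measure for some $\tilde h\in C(X)$, then $1/C\le m[x_1\cdots x_n]/e^{-nP(\tilde h)+S_n\tilde h(x)}\le C$ for some $C>0$. On the other hand $m$ is the Gibbs measure for $\F$ (using \ref{a1} and Theorem~\ref{fengthem}), so $1/C'\le m[x_1\cdots x_n]/(e^{-nP(\F)}f_n(x))\le C'$. Dividing these inequalities gives a single constant $K>0$ with
\begin{equation*}
\frac{1}{K}\le\frac{f_n(x)}{e^{S_n\phi(x)}}\le K,\qquad \phi:=\tilde h-P(\tilde h)+P(\F),
\end{equation*}
uniformly on every cylinder $[x_1\cdots x_n]$. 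Writing
\begin{equation*}
\frac{f_{n+m}(x)}{f_n(x)f_m(\sigma_X^nx)}=\frac{f_{n+m}(x)/e^{S_{n+m}\phi(x)}}{\bigl(f_n(x)/e^{S_n\phi(x)}\bigr)\bigl(f_m(\sigma_X^nx)/e^{S_m\phi(\sigma_X^nx)}\bigr)}
\end{equation*}
and using the exact cocycle identity for $\phi$, the right-hand side is trapped between $1/K^3$ and $K^3$, so $\F$ is almost additive with constant $3\log K$.

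The concluding sentence of the theorem is a direct contrapositive: if there existed a continuous function for which $m$ is an invariant weak Gibbs measure, Theorem~\ref{wk=g} would force $\F$ to be asymptotically additive, and then \ref{estim} would supply the required $\{C_{n,m}\}$. There is no serious obstacle in this proof; the only minor care needed is in verifying that the $(1/n)\log C_{n,m}$ limit in \ref{estim} holds in both directions, which boils down to noting that each norm term is divided by the parameter that is sent to infinity (or by a parameter at least as large).
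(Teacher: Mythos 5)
Your proposal is correct, and both parts (ii) and the concluding contrapositive run exactly as in the paper; the difference lies in part (i). For (i) you apply Cuneo's theorem directly to write $f_n=e^{S_nh+u_n}$ with $(1/n)\lVert u_n\rVert_\infty\to 0$ and read off \eqref{fwg} from the exact cocycle identity for $S_nh$, with $C_{n,m}=\exp(\lVert u_{n+m}\rVert_\infty+\lVert u_n\rVert_\infty+\lVert u_m\rVert_\infty)$; this bypasses the measure $m$ altogether and in fact shows that statement (i) is a general property of asymptotically additive sequences, independent of the Gibbs structure. The paper instead routes through $m$: from the proofs of Lemmas \ref{mainot} and \ref{simplel} it gets $P(\F)=P(h)$ and that $m$ is an invariant weak Gibbs measure for $h$, then combines the weak Gibbs inequality for $h$ with the Gibbs inequality for $\F$ (as in \eqref{property}) to obtain \eqref{weaks}, so its constants $C_{n,m}=C^3C_nC_mC_{n+m}$ are expressed in terms of the Gibbs/weak Gibbs constants rather than the Cuneo errors. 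Both derivations are sound and give subexponential $C_{n,m}$; yours is slightly more economical and more general, while the paper's makes the link between \eqref{fwg} and the weak Gibbs constants explicit. Your part (ii) (dividing the two Gibbs inequalities with constant bounds to get $1/K\le f_n/e^{S_n\phi}\le K$ with $\phi=\tilde h-P(\tilde h)+P(\F)$, hence almost additivity with constant $3\log K$) is precisely the paper's argument of replacing $C_n$ in \eqref{wk} and \eqref{property} by a constant, and your use of Theorem \ref{wk=g} for the final statement matches the paper.
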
  
\begin{rem}
(1) In Example \ref{exshin}, we study a sequence which satisfies \ref{a0} and \ref{a1} without  (\ref{fwg}). (2) See \cite[Theorem 1.14 (ii)]{BCJP} for the result related to \ref{estim}.
\ref{qb} was also obtained in Section 4.1 \cite{Cu} since
 $m$ satisfies the quasi Bernoulli property (see  \cite{Cu}).
 \end{rem}
 \begin{proof}
 Let $h$ be defined as in Theorem \ref{wk=g} \ref{wk0}. By the proofs of Lemmas \ref{mainot} and  \ref{simplel}, we obtain that $P(\F)=P(h)$ and $m$ is an invariant weak Gibbs measure for $h$. Replacing $\tilde h$ by $h$ in  (\ref{property}), 
 we obtain that
\begin{equation} \label{weaks}
\frac{1}{C^3C_nC_mC_{n+m}}\leq \frac{f_{n+m}(x)}{f_n(x) f_{m}(\sigma^n_X x)}\leq C^3C_{n+m}C_nC_m.
\end {equation}
Since $\lim_{n\rightarrow \infty} (1/n)\log C_n=0$,  by setting $C_{n,m}:=C^3C_nC_mC_{n+m}$ we obtain the first statement. 
To obtain the second statement, we apply the latter part of the proof of Lemma \ref{simplel}. By replacing $C_n$ in (\ref{wk}) and (\ref{property}) by a constant, we obtain the second statement. The last statement follows from Theorem \ref{wk=g}.
\end {proof}

%\begin{prop}
%$\F$ is asymptotically additive on $Y$. Then  
%For all $n, m\in\N, y\in Y$, \textbf{ We study forms here.}
%\end{prop}

\section{relation between the existence of a continuous compensation function and an asymptotically additive sequence}\label{relation}

%In this section, we study the following question. What are the necessary and sufficient conditions for the existence of $h\in C(Y)$ such that 
%$\lim_{n\rightarrow \infty} ({1/n})\int \log f_n dm=\int h d\mu$ for each $\mu\in M(Y, \sigma_Y)$?
%Cuneo \cite{Cu} showed that if $\F=\{\log f_n\}_{n=1}^{\infty}$ is asymptotically additive then we can find such a function.  

%In this section, we will study the necessary conditions for the existence of such a function. 

In this section, we consider relative pressure functions $P(\sigma_X, \pi, f)$, where $f\in C(X)$. In general  we can represent $P(\sigma_X, \pi, f)$   by using a subadditive sequence satisfying \ref{a4}. %If $f$ has bounded variation, then it satisfies \ref{a1}. 
What are necessary and sufficient conditions for the existence of $h\in C(Y)$ satisfying $\int P(\sigma_X, \pi, f) dm=\int h dm$ for each $m\in M(Y, \sigma_Y)$? 
By \cite[Theorem 2.1]{Cu},  if  $P(\sigma_X, \pi, f)$ is represented  by an asymptotically additive sequence then we can find such a function $h$. We will study necessary conditions for the existence of such a function $h$ and relate them with the
%We relate the question with 
existence of a compensation function for a factor map between subshifts. To this end, we will apply the results from Section \ref{subas}. We will study the property for periodic points from Lemma \ref{maina}\ref{ms2}.   

\begin{thm}\label{relativeff} \cite{LW} [Relativised Variational Principle]
Let $(X, \sigma_X)$ and $(Y, \sigma_Y)$ be subshifts and $\pi:X\rightarrow Y$ be a one-block factor map. Let $f\in C(X)$. Then for $m\in M(Y, \sigma_Y)$,
\begin{equation}\label{relativef}
\int P(\sigma_X, \pi,f) dm=
 \sup\{h_{\mu}(\sigma_X)-h_{m}(\sigma_Y) +\int f d\mu: \mu\in M(X, \sigma_X),\pi\mu=m\}.
 \end{equation}
\end{thm}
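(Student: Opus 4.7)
The plan is to establish the two inequalities of (\ref{relativef}) separately, following the approach of Ledrappier and Walters. For the inequality $\geq$, I would fix any $\mu\in M(X,\sigma_X)$ with $\pi\mu=m$ and aim to bound $h_\mu(\sigma_X)-h_m(\sigma_Y)+\int f\,d\mu$ from above by $\int P(\sigma_X,\pi,f)\,dm$. The crucial tool is the Abramov--Rokhlin type relative entropy decomposition
\begin{equation*}
h_\mu(\sigma_X)=h_m(\sigma_Y)+h_\mu(\sigma_X\mid\pi^{-1}\mathcal{B}_Y),
\end{equation*}
where $\mathcal{B}_Y$ is the Borel $\sigma$-algebra of $Y$. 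Applying the relative Shannon--McMillan--Breiman theorem to the time-zero partition of $X$ (which is generating on a subshift) together with Birkhoff's theorem for $S_nf$, one obtains the fiberwise estimate that, for $m$-a.e. $y$,
\begin{equation*}
h_\mu(\sigma_X\mid\pi^{-1}\mathcal{B}_Y)(y)+\int f\,d\mu_y\ \leq\ \limsup_{n\to\infty}\tfrac{1}{n}\log\!\sum_{x\in E_n(y)}e^{(S_nf)(x)},
\end{equation*}
where $\{\mu_y\}_{y\in Y}$ is the disintegration of $\mu$ over $\pi$ and $E_n(y)$ is any maximal $(n,\delta)$-separated subset of $\pi^{-1}(y)$. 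Integrating against $m$ and letting $\delta\to 0$ produces the desired upper bound.

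For the inequality $\leq$, the strategy is to construct approximating invariant measures. Fix $\epsilon>0$ and $\delta>0$, and for each $n\in\N$ select, in a measurable fashion in $y$, an $(n,\delta)$-separated subset $E_n(y)\subseteq\pi^{-1}(y)$ realizing $P_n(\sigma_X,\pi,f,\delta)(y)$ up to a factor $e^{n\epsilon}$. Form the Gibbs-like fiber probability measure
\begin{equation*}
\sigma_n(y)=\frac{1}{P_n(\sigma_X,\pi,f,\delta)(y)}\sum_{x\in E_n(y)}e^{(S_nf)(x)}\,\delta_x,
\end{equation*}
integrate and symmetrize to define
\begin{equation*}
\mu_n=\tfrac{1}{n}\sum_{i=0}^{n-1}\int_Y(\sigma_X^i)_*\sigma_n(y)\,dm(y),
\end{equation*}
select a weak-$*$ accumulation point $\mu$, and verify $\mu\in M(X,\sigma_X)$ with $\pi\mu=m$ using $\pi\circ\sigma_X=\sigma_Y\circ\pi$ and $(\sigma_Y)_*m=m$. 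The usual entropy estimates in the proof of the classical variational principle, adapted fiberwise and combined with the upper semicontinuity of $\nu\mapsto h_\nu(\sigma_X)-h_{\pi\nu}(\sigma_Y)$ (valid for expansive systems such as subshifts), then yield
\begin{equation*}
h_\mu(\sigma_X)-h_m(\sigma_Y)+\int f\,d\mu\ \geq\ \int P(\sigma_X,\pi,f,\delta)\,dm-O(\epsilon).
\end{equation*}
Sending $\epsilon\to 0$ and $\delta\to 0$ completes the argument.

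The main obstacle is the interaction between the measurable selection of the fiber sets $E_n(y)$ as $y$ varies and the integrability/passage-to-the-limit arguments needed to exchange $\limsup$ with $\int(\cdot)\,dm$ in the fiberwise bound. In the symbolic setting these issues become tractable because $E_n(y)$ can be chosen as a set of cylinder representatives and the combinatorial finiteness of each fiber block set ensures Borel measurability of $y\mapsto E_n(y)$ and of $y\mapsto P_n(\sigma_X,\pi,f,\delta)(y)$. The analytic hurdle that remains is verifying upper semicontinuity of the relative entropy functional on $\{\nu\in M(X,\sigma_X):\pi\nu=m\}$, which guarantees that the weak-$*$ accumulation point $\mu$ inherits the near-optimal fiberwise bounds constructed from the $\sigma_n(y)$.
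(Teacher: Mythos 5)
This theorem is not proved in the paper at all: it is quoted verbatim from Ledrappier and Walters \cite{LW}, so there is no in-paper argument to compare yours against. Your outline is essentially the classical Ledrappier--Walters strategy, specialized to subshifts: the upper bound via the decomposition $h_\mu(\sigma_X)=h_m(\sigma_Y)+h_\mu(\sigma_X\mid\pi^{-1}\mathcal{B}_Y)$ (which, for a one-block factor map, follows from the generator identity $h_\mu(\sigma_X,\alpha)=h_\mu(\sigma_X,\pi^{-1}\beta)+h_\mu(\sigma_X,\alpha\mid\bigvee_{i\ge0}\sigma_X^{-i}\pi^{-1}\beta)$ with $\alpha,\beta$ the time-zero partitions) together with a fiberwise counting estimate, and the lower bound by spreading weighted $(n,\delta)$-separated fiber sets into invariant measures and passing to a weak-$*$ limit; you are also right that the measurable-selection issues trivialize in the symbolic setting, where $y\mapsto P_n(\sigma_X,\pi,f,\delta)(y)$ is locally constant for small $\delta$ and comparable to $g_n(y)$.

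Two steps, however, are asserted in a stronger form than the argument actually delivers and would need repair in a complete proof. First, the fiberwise inequality $h_\mu(\sigma_X\mid\pi^{-1}\mathcal{B}_Y)(y)+\int f\,d\mu_y\le\limsup_{n\to\infty}\frac{1}{n}\log\sum_{x\in E_n(y)}e^{(S_nf)(x)}$ is not what the relative SMB/Birkhoff argument yields pointwise: the disintegration $\mu_y$ is not invariant, $\int f\,d\mu_y$ need not control the Birkhoff averages of $\mu_y$-typical points, and Ledrappier--Walters obtain the upper bound only in integrated form (via conditional information functions, or in the symbolic case by comparing $\frac{1}{n}H_\mu(\alpha_0^{n-1}\mid\pi^{-1}\beta_0^{n-1})+\frac{1}{n}\int S_nf\,d\mu$ with $\frac{1}{n}\int\log g_n\,dm$ using Jensen's inequality); alternatively one must first reduce to ergodic $m$ and $\mu$, where the relevant fiber quantities are a.e.\ constant. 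Second, upper semicontinuity of $\nu\mapsto h_\nu(\sigma_X)-h_{\pi\nu}(\sigma_Y)$ does hold here (it equals $\inf_n\frac{1}{n}\left(H_\nu(\alpha_0^{n-1})-H_{\pi\nu}(\beta_0^{n-1})\right)$, an infimum of weak-$*$ continuous functions since cylinders are clopen), but it cannot by itself close the lower bound: the approximating measures $\mu_n$ are not invariant, so the functional cannot be applied to them. The standard mechanism is instead the finite-time estimate of the classical variational principle, conditioned on $\pi^{-1}\beta_0^{n-1}$, transferred to the limit measure $\mu$ through weak-$*$ continuity of $\nu\mapsto H_\nu(\xi)$ for clopen partitions; with that substitution (and noting $\pi\mu_n=m$ for all $n$, hence $\pi\mu=m$), your sketch matches the proof in \cite{LW}.
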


Applying the relativised variational principle, we first study Borel measurable compensation functions for factor maps between subshifts.
\begin{prop}\label{key1}
Let $(X, \sigma_X)$ and $(Y, \sigma_Y)$ be subshifts and $\pi:X\rightarrow Y$ be a one-block factor map. For each $f\in C(X)$,  
$f-P(\sigma_X, \pi, f)\circ \pi$ is a Borel measurable compensation function for $\pi$.
\end{prop}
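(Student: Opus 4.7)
The plan is to verify the defining compensation identity by partitioning the supremum on the left-hand side according to the fibre structure of the push-forward $\pi_{\ast}:M(X,\sigma_X)\to M(Y,\sigma_Y)$ and then applying the relativised variational principle (Theorem~\ref{relativeff}) fibre-by-fibre.

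First I would confirm that $F:=f-P(\sigma_X,\pi,f)\circ\pi$ is a bounded Borel measurable function on $X$, so that $\int F\,d\mu$ is well-defined for every $\mu\in M(X,\sigma_X)$. The function $P(\sigma_X,\pi,f)$ is Borel measurable by its definition as $\lim_{\delta\to0}\limsup_{n\to\infty}$ of continuous functions, and it is bounded because $\pi^{-1}(\{y\})\neq\emptyset$ forces $P(\sigma_X,\pi,f)(y)\geq-\|f\|_{\infty}$, while on the other side the $(n,\delta)$-separated sets inside $\pi^{-1}(\{y\})$ have cardinality bounded by $|B_n(X)|$, giving the upper bound $h_{\mathrm{top}}(\sigma_X)+\|f\|_{\infty}$. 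Composing with the continuous map $\pi$ preserves Borel measurability.

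Next, fix $\phi\in C(Y)$. For any $\mu\in M(X,\sigma_X)$, the quantities $\int P(\sigma_X,\pi,f)\circ\pi\,d\mu$ and $\int\phi\circ\pi\,d\mu$ depend only on $\nu:=\pi\mu$, equaling $\int P(\sigma_X,\pi,f)\,d\nu$ and $\int\phi\,d\nu$ respectively. Grouping the supremum by the value of $\nu=\pi\mu$, and using the standard fact that $\pi_{\ast}$ is surjective onto $M(Y,\sigma_Y)$, yields
\begin{equation*}
\sup_{\mu\in M(X,\sigma_X)}\Bigl\{h_{\mu}(\sigma_X)+\textstyle\int F\,d\mu+\int\phi\circ\pi\,d\mu\Bigr\}
=\sup_{\nu\in M(Y,\sigma_Y)}\Bigl[S(\nu)-\textstyle\int P(\sigma_X,\pi,f)\,d\nu+\int\phi\,d\nu\Bigr],
\end{equation*}
where $S(\nu):=\sup\{h_{\mu}(\sigma_X)+\int f\,d\mu:\mu\in M(X,\sigma_X),\ \pi\mu=\nu\}$. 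By Theorem~\ref{relativeff}, $S(\nu)=h_{\nu}(\sigma_Y)+\int P(\sigma_X,\pi,f)\,d\nu$, so the two $P$-integrals cancel and the right-hand side collapses to $\sup_{\nu}\{h_{\nu}(\sigma_Y)+\int\phi\,d\nu\}$, which is precisely what the definition of a compensation function demands.

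The argument is essentially bookkeeping around Theorem~\ref{relativeff}; the one place that needs care is justifying the manipulations behind the fibre decomposition, namely surjectivity of $\pi_{\ast}$ and the boundedness/Borel measurability of $P(\sigma_X,\pi,f)$ that make $\int F\,d\mu$ meaningful and finite. I do not anticipate a substantive obstacle beyond these verifications, since the identity is an almost immediate rearrangement of the relativised variational principle.
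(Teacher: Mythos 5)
Your proposal is correct and follows essentially the same route as the paper: fix $\phi$, group the supremum over $\mu\in M(X,\sigma_X)$ by the fibre $\pi\mu=\nu$, pull out the integrals of $P(\sigma_X,\pi,f)\circ\pi$ and $\phi\circ\pi$ (which depend only on $\nu$), and apply Theorem~\ref{relativeff} so the relative-pressure terms cancel before taking the supremum over $\nu\in M(Y,\sigma_Y)$. The added checks on boundedness, Borel measurability, and surjectivity of the push-forward are fine and only make explicit what the paper leaves implicit.
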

\begin{rem} 
In general, $f-P(\sigma_X, \pi, f)\circ \pi$ is not  continuous on $X$.
\end{rem}
\begin{proof}
Let $m\in M(Y, \sigma_Y)$ and $\phi\in C(X)$. Applying Theorem \ref{relativeff}, we obtain
\begin{equation*} 
\begin{split} 
&\sup\{h_{\mu}(\sigma_X) -\int P(\sigma_X, \pi, f)\circ\pi  
d\mu +\int f d\mu + \int \phi\circ \pi d\mu: \mu\in M(X, \sigma_X), \pi \mu=m\}\\ 
&= \sup\{h_{\mu}(\sigma_X) +\int f d\mu: \mu\in M(X, \sigma_X), \pi \mu=m\} -\int P(\sigma_X, \pi, f)dm+  \int \phi dm\\
&=h_{m}(\sigma_Y)+\int \phi dm.
\end{split}
\end{equation*} 
Taking the supremum over $m \in M(Y, \sigma_Y)$, we obtain 
\begin{equation}\label{eqrel}
\begin{split} 
&\sup\{h_{\mu}(\sigma_X) -\int P(\sigma_X, \pi, f)\circ\pi  
d\mu +\int f d\mu + \int \phi\circ \pi d\mu: \mu\in M(X, \sigma_X)\}\\ 
&=\sup \{ h_{m}(\sigma_Y)+\int \phi dm: \mu\in M(Y, \sigma_Y)\}.
\end{split}
\end{equation} 
\end{proof}

Let $\pi:X\rightarrow Y$ be a one-block factor map between subshifts.
For $y=(y_i)_{i=1}^{\infty}$, let $E_n(y)$ be a set consisting of exactly one point from each cylinder $[x_1\dots x_n]$ in $X$ such that $\pi(x_1\dots x_n)=y_1\dots y_n$. For $n\in\N$ and $f\in C(X)$, define 
\begin{equation}\label{beq}
g_n(y)=\sup_{E_n(y)}\{\sum_{x\in E_n(y)}e^{(S_nf)(x)}\}.
\end{equation}

%For a factor map between compact sets, Ledrappier and Walters \cite{LW} introduced a relative variational principle for topological pressure. Relative pressure functions for continuous functions have been studied by \cite{Wa, PS, Sh3}.\ 

The following result can be deduced by \cite[Proposition 3.7(i)]{Fe1}.
%\begin{lemma*}\label{simfo}
If $(X, \sigma_X)$ and $(Y, \sigma_Y)$ be subshifts and $\pi:X\rightarrow Y$ is a one-block factor map, then for $f\in C(X)$,   
\begin{equation}\label{pesh}
P(\sigma_X, \pi, f)(y)=\limsup _{n\rightarrow\infty} \frac{1}{n} \log {g_n}(y)
\end{equation}
$\mu$-almost everywhere  for every invariant Borel probability measure $\mu$ on $Y$.
%\end{lemma*}
%\begin{rem} 
The equation (\ref{pesh}) was shown by Petersen and Shin \cite{PS} for the case when $X$ is an irreducible shift of finite type. The result for general subshifts 
is obtained by combining \cite[Proposition 3.7(i)]{Fe1} and the fact that 
$P(\sigma_X,  \pi, f)(y)\leq \limsup _{n\rightarrow\infty} (1/{n}) \log {g_n}(y)$ for all $y\in Y$. 
Note that the function $P(\sigma_X, \pi, f)$ is bounded on $Y$.
%\end{rem}

\begin{lemma}\label{new}
Let $(X, \sigma_X)$ be a subshift with the weak specification property, $(Y, \sigma_Y)$ be a subshift and $\pi:X\rightarrow Y$ be a one-block factor map. If $f\in C(X)$, then the sequence $\G=\{\log g_n\}_{n=1}^{\infty}$ on $Y$ satisfies \ref{a0} and \ref{a4} with bounded variation.  
%Then the sequence $\{\log g_n\}_{n=1}^{\infty}$ on $Y$ is subadditive and satisfies \ref{a1} if $f$ has bounded variation on an irreducible subshfit.
\end{lemma}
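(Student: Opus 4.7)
First I would observe that $g_n(y)$ depends only on $y_1,\dots,y_n$: writing $F_n(\bar u):=\sup\{e^{(S_nf)(x)}:x\in[\bar u]\}$ for $\bar u\in B_n(X)$ and $G_n(u):=\sum_{\bar u:\pi(\bar u)=u}F_n(\bar u)$ for $u\in B_n(Y)$, the definition of $g_n$ immediately gives $g_n(y)=G_n(y_1\dots y_n)$. This makes the bounded variation clause trivial with $M=1$. For subadditivity (yielding \ref{a0} with $C=0$), I would decompose each $\bar a\in B_{n+m}(X)$ as $\bar a=\bar u\bar v$ and use $F_{n+m}(\bar u\bar v)\leq F_n(\bar u)F_m(\bar v)$ from splitting the Birkhoff sum; summing over allowable $\bar a$ with $\pi(\bar a)=y_1\dots y_{n+m}$ and relaxing to independent $\bar u,\bar v$ yields $g_{n+m}(y)\leq g_n(y)g_m(\sigma_Y^n y)$.

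The bulk of the work is condition \ref{a4}. Let $p$ be a weak specification number for $X$ and fix $u\in B_n(Y),v\in B_m(Y)$; set $A:=\{\bar u\in B_n(X):\pi(\bar u)=u\}$ and $B:=\{\bar v\in B_m(X):\pi(\bar v)=v\}$. For each pair $(\bar u,\bar v)\in A\times B$ weak specification hands me some connecting word $\bar w(\bar u,\bar v)\in B_k(X)$, $0\leq k\leq p$, with $\bar u\bar w\bar v\in B(X)$, and I would fix such a choice arbitrarily. My key leverage is that the set of candidate connecting words has cardinality at most $P:=\sum_{k=0}^p|B_k(X)|$, a constant independent of $n$ and $m$. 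Pigeonholing over this finite set produces a single $\bar w^*$ of some length $k^*\leq p$ assigned to a subset $\mathcal{S}\subseteq A\times B$ satisfying
\begin{equation*}
\sum_{(\bar u,\bar v)\in\mathcal{S}}F_n(\bar u)F_m(\bar v)\geq\frac{1}{P}G_n(u)G_m(v).
\end{equation*}
Take $w^*:=\pi(\bar w^*)\in B_{k^*}(Y)$.

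The remaining step---and the subtlest part---is the estimate $F_{n+m+k^*}(\bar u\bar w^*\bar v)\geq e^{-\delta_n-\delta_m-p\|f\|_\infty}F_n(\bar u)F_m(\bar v)$ for $(\bar u,\bar v)\in\mathcal{S}$. Since $f$ is only continuous, I would invoke uniform continuity through $\mathrm{var}_k(f):=\sup\{|f(x)-f(x')|:x_i=x'_i,\ i\leq k\}\to 0$. For any $y\in[\bar u\bar w^*\bar v]$ the shifted points $\sigma_X^i y$ and $\sigma_X^i x^*_{\bar u}$ (where $x^*_{\bar u}$ attains the sup defining $F_n(\bar u)$) coincide on the first $n-i$ coordinates, so $(S_nf)(y)\geq(S_nf)(x^*_{\bar u})-\delta_n$ with $\delta_n:=\sum_{j=1}^n\mathrm{var}_j(f)$; the same device yields $(S_mf)(\sigma^{n+k^*}y)\geq(S_mf)(x^*_{\bar v})-\delta_m$, while $(S_{k^*}f)(\sigma^ny)\geq-p\|f\|_\infty$. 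Combining with the pigeonhole selection and summing over $\mathcal{S}$ produces $G_{n+m+k^*}(uw^*v)\geq D_{n,m}G_n(u)G_m(v)$ with $D_{n,m}:=P^{-1}e^{-\delta_n-\delta_m-p\|f\|_\infty}$. Applying Ces\`aro to $\mathrm{var}_j(f)\to 0$ gives $\delta_n/n\to 0$, whence $(1/n)\log D_{n,m}\to 0$ as $n\to\infty$ and similarly in $m$; the monotonicity $D_{n,m}\geq D_{n,m+1},D_{n+1,m}$ is immediate from $\delta_n\leq\delta_{n+1}$. The main obstacle is finding a single word $w^*\in B(Y)$ that works on a macroscopic fraction of pairs; once the finite-cardinality pigeonhole is in place, the uniform-continuity estimate is routine.
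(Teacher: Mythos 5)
Your proposal is correct and follows essentially the same route as the paper: explicit local constancy of $g_n$ gives bounded variation and subadditivity (so \ref{a0} with $C=0$), and \ref{a4} comes from weak specification plus a pigeonhole over a constant-size family of connecting words together with a uniform-continuity (variation) estimate on Birkhoff sums. The only cosmetic difference is that you pigeonhole over the connecting words in $X$ (at most $\sum_{k=0}^{p}|B_k(X)|$ of them), whereas the paper groups by their images $w\in B_k(Y)$ with $uwv\in B(Y)$ (at most $l^p$), which changes nothing essential.
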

\begin{rem}\label{pastover}
%The proposition above has been studied for some special cases. 
In particular, the sequence $\{\log g_n\}_{n=1}^{\infty}$ on $Y$ satisfies \ref{a0} and \ref{a1} with bounded variation if $f\in C(X)$ is in the Bowen class (see \cite{Fe1, Y3, ily}).   
\end{rem}
\begin{proof}
First we show that $\G=\{\log g_n\}_{n=1}^{\infty}$ satisfies \ref{a0}. Let $y=(y_1,\dots, y_n, \dots, $\\$ y_{n+m}, \dots)\in Y$. For each $x\in E_{n+m}(y)$, define $S_{x}$ by
$S_x:=\{x'\in E_{n+m}(y): x'_i=x_i , 1\leq i\leq n\}$. Take a point  $x^{*}\in S_x$ such that $e^{(S_nf)(x^{*})}=\max\{ e^{(S_nf)(z)}: z\in S_x\}$. Then we can construct a set $E_n(y)$ such that $x^{*}\in E_n(y)$. In a similar manner, for each $x\in E_{n+m}(y)$, define $S_{\sigma^{n}x}$ by $S_{\sigma^{n}x}:=\{x'\in E_{n+m}(y): x'_i=x_i , n+1\leq i\leq m+n\}$ and  take a point  $x^{**}\in S_{\sigma^nx}$ such that $e^{(S_mf)(\sigma^nx^{**})}=\max\{ e^{(S_mf)(z)}: z\in S_{\sigma^nx}\}$. Then we can construct a set $E_{m}(\sigma^n y)$ such that $\sigma^n x^{**}\in E_{m}(\sigma^n y)$. Hence  we obtain 
$g_{n+m}(y)\leq g_n(y)g_{m}(\sigma^ny)$. Next we show that $\G=\{\log g_n\}_{n=1}^{\infty}$ satisfies \ref{a4}. We modify slightly the arguments found in  \cite{ily} (see also \cite {Fe1}) by taking into account of tempered variation of $f$ and we write a proof for completeness.  Given  given $u\in B_n(Y)$ and  $v \in B_m(Y)$, let 
$x_1\dots x_n\in B_n(X)$ such that $\pi(x_1\dots x_n)=u$ and let 
$z_1\dots z_m\in B_m(X)$ such that  $\pi(z_1\dots z_m)=v$. Let $p$ be a weak specification number of $X$. Then there exists $\tilde {w}\in B_k(X)$, $0\leq k\leq p$ such that $x_1\dots x_n\tilde{w}z_1\dots z_m\in B_{n+m+k}(X)$. 
Hence if $x\in [x_1\dots x_n \tilde {w} z_1\dots z_m]$, by letting $\overline m=\min_{0\leq k\leq p}\{e^{(S_kf)(x)}:x\in X\}$, where $e^{(S_0f)(x)}:=1$ for all $x\in X$, we obtain
\begin{equation}\label{conti}
e^{(S_{n+k+m}f)(x)}\geq \overline {m} e^{(S_nf)(x)} e^{(S_mf)(\sigma^{n+k}x)}.
\end{equation}
 For $n\in \N$ let $M_n:=\sup\{e^{(S_nf)(x)}/e^{(S_nf)(x')}: x_i=x'_i, 1\leq i\leq n\}$. %Then $\lim_{n\rightarrow \infty}(\log M_n)/n=0$. 
Since $X$ has the weak specification, $Y$ also satisfies the weak specification
property with a specification number $p$. 
Define $S$ by $S=\{w\in B_k(Y): 0\leq k\leq p, uwv\in B(Y)\}$ and let $y_{w}$ be a point from the cylinder set $[uwv]$.
Then 
\begin{align*}
&\sum_{w\in S} \sum_{x\in E_{n+m+\vert w\vert}(y_{w})}e^{(S_{n+m+\vert w \vert}f)(x)}\geq 
\sum_{\substack{x\in [x_1\dots x_n\tilde {w} z_1\dots z_m],\\ \pi(x_1\dots x_n \tilde {w} z_1\dots z_m)\in[uwv]}}\overline {m}e^{(S_nf)(x)} e^{(S_mf)(\sigma^{n+k}x)}\\
&\geq \frac{\overline m}{M_nM_m}(\sum_{\pi(x_1\dots x_n)=u}\sup_{x\in [x_1\dots x_n]}e^{(S_nf)(x)})
(\sum_{\pi(z_1\dots z_m)=v}\sup_{z\in [z_1\dots z_m]}e^{(S_mf)(z)}) \\
&\geq \frac{\overline m}{M_nM_m}\sup\{g_n(y): y\in[u]\}\sup\{g_m(y): y\in[v]\}.
\end{align*}
%where in the second inequality $\tilde w$ is chosen so that (\ref{conti}) holds for 
%$x_1\dots x_n$ and $y_1\dots y_m$.
Hence 
\begin{equation*}
\sum_{w\in S}g_{n+m+\vert w\vert}(y_w)\geq \frac{\overline  {m}}{M_nM_m}\sup\{g_n(y): y\in[u]\}\sup\{g_m(y): y\in[v]\}.
\end{equation*}
Hence there exits $\bar{w}\in S$ such that 
\begin{equation*}
g_{n+m+\vert \bar w\vert}(y_{\bar w})\geq \frac{\overline {m}}{M_nM_m\vert S\vert}\sup\{g_n(y): y\in[u]\}\sup\{g_n(y): y\in[v]\}.
\end{equation*}
If $Y$ is a subshift on $l$ symbols, then $\vert S\vert \leq l^p$. Hence 
$\G$ satisfies \ref{a4} by setting $D_{n,m}=\overline {m}/(l^p M_nM_m)$. By the definition of $\G$, clearly $\G$ has bounded variation. 
\end{proof}

%We modify slightly the arguments found in  \cite{ily} and we write the proof for completeness (see the version in Arxive for details). 
%Let $W$ be a finite set from \ref{a4} with the maximum length  $p_1$ of words in $W$.  Then for $y=(y_1,\dots, y_n, \dots)\in Y$,
%In general, there does not always exist $h\in C(Y)$ such that $\int P(\sigma_X, \pi, f) dm=\int h dm$ for each $m\in M(Y, \sigma_Y)$. In the next lemmas we study the existence of such  a function $h$  in terms of the existence of a compensation function.
%by modifying slightly \cite[Theorem 3.3]{Wa} and using Proposition \ref{key1}.
%We remark that in Theorem \ref{wwd}
% we study the characterization in terms of a property of the sequence $\G$.
\begin{lemma} \cite{Wa} \label{sm}
Let $(X, \sigma_X)$ and $(Y, \sigma_Y)$ be subshifts and $\pi:X\rightarrow Y$ be a one-block factor map. Given $f\in C(X)$,  the following statements are equivalent for $h\in C(Y)$. 
\begin{enumerate}[label=(\roman*)]
\item $f-h\circ\pi$ is a compensation function for $\pi$.
\label{i2}
\item $\int P(\sigma_X, \pi, f-h\circ\pi)dm=0$ for each $m\in M(Y, \sigma_Y)$.\label{i3}
\item  $m(\{y\in Y: P(\sigma_X, \pi, f-h\circ \pi)(y)=0\})=1$ for each $m\in M(Y, \sigma_Y)$,
 \label{sm4}
\end{enumerate}
 \end{lemma}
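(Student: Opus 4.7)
My plan is to establish the cycle (iii) $\Rightarrow$ (ii) $\Rightarrow$ (i) $\Rightarrow$ (ii) $\Rightarrow$ (iii); the first implication is immediate. Write $F=f-h\circ\pi\in C(X)$, $P=P(\sigma_X,\pi,F)$, and let $g_n$ be defined by (\ref{beq}) with $F$ in place of $f$. A direct selection argument (the subadditivity portion of the proof of Lemma \ref{new}, which does not use weak specification) shows $g_{n+k}(y)\leq g_n(y)g_k(\sigma_Y^n y)$, so $\{\log g_n\}$ is subadditive on $Y$; combined with (\ref{pesh}) this gives $P(y)=\limsup_n(1/n)\log g_n(y)$ $m$-a.e.\ for every $m\in M(Y,\sigma_Y)$.

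For (ii) $\Rightarrow$ (i): the calculation behind (\ref{eqrel}) in Proposition \ref{key1}, applied with $F$ in place of $f$, combined with the relativised variational principle (\ref{relativef}) via the fibre decomposition of $M(X,\sigma_X)$ over $\pi$, produces
\begin{equation*}
P_{\sigma_X}(F+\phi\circ\pi)=\sup_{m\in M(Y,\sigma_Y)}\Bigl\{h_m(\sigma_Y)+\int P\,dm+\int\phi\,dm\Bigr\}
\end{equation*}
for every $\phi\in C(Y)$. Under (ii) the middle integral vanishes and the right-hand side reduces to $P_{\sigma_Y}(\phi)$ by the variational principle on $Y$; this is precisely the compensation function condition (i).

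For (i) $\Rightarrow$ (ii): the same identity together with (i) gives $\sup_m\{h_m(\sigma_Y)+\int P\,dm+\int\phi\,dm\}=P_{\sigma_Y}(\phi)$ for every $\phi\in C(Y)$. The key structural claim is that $J(m):=\int P\,dm$ is upper semi-continuous on the compact convex set $M(Y,\sigma_Y)$. Indeed, invariance of $m$ together with pointwise subadditivity of $g_n$ makes the scalar sequence $n\mapsto\int\log g_n\,dm$ subadditive, so Fekete's lemma yields $\lim(1/n)\int\log g_n\,dm=\inf_n(1/n)\int\log g_n\,dm$; for ergodic $m$, Kingman's subadditive ergodic theorem and (\ref{pesh}) identify this common value with $J(m)$, and the ergodic decomposition extends the identification to every invariant $m$. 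Since each $m\mapsto(1/n)\int\log g_n\,dm$ is affine and weak-$*$ continuous, $J$ is an infimum of affine continuous functionals, hence itself affine and u.s.c. Consequently $m\mapsto h_m(\sigma_Y)+J(m)$ is bounded, affine, and u.s.c.\ on $M(Y,\sigma_Y)$, so the Fenchel--Moreau biconjugate theorem combined with the variational principle on $Y$ yields
\begin{equation*}
h_m(\sigma_Y)+J(m)=\inf_{\phi\in C(Y)}\Bigl\{P_{\sigma_Y}(\phi)-\int\phi\,dm\Bigr\}=h_m(\sigma_Y),
\end{equation*}
forcing $J(m)=0$ for every $m\in M(Y,\sigma_Y)$, which is (ii).

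Finally, for (ii) $\Rightarrow$ (iii): for ergodic $m$, Kingman's theorem together with (\ref{pesh}) shows that $P$ is $m$-a.s.\ equal to the constant $J(m)$, which is $0$ by (ii), giving (iii) for ergodic $m$; the ergodic decomposition $m=\int m_\omega\,d\tau(\omega)$ then lifts the $m_\omega$-a.e.\ statement to arbitrary invariant $m$. The step I expect to require the most care is the upper semi-continuity of $J$: although $P$ is only defined as a $\limsup$ of continuous functions and need not be u.s.c.\ itself, its integral against every invariant measure coincides with the infimum over $n$ of the continuous averages $(1/n)\int\log g_n\,dm$, and this structural identity is what powers the biconjugate argument.
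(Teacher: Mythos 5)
Your proposal is correct, but note that the paper itself offers no proof of Lemma \ref{sm}: it is quoted from Walters \cite{Wa}, and what you have written is essentially a self-contained reconstruction of the classical argument. The ingredients you use are the same ones the surrounding text relies on elsewhere: the fibred form of the relativised variational principle (Theorem \ref{relativeff}, exactly the computation in Proposition \ref{key1}), the representation (\ref{pesh}) together with subadditivity of $\{\log g_n\}$ (the first half of the proof of Lemma \ref{new}, which indeed does not use weak specification), and Kingman's theorem as in Lemma \ref{sm1}. The only implication not already implicit in the paper is (i)$\Rightarrow$(ii), and your duality argument for it is sound; it is in substance Walters' own route. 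Two small points deserve to be made explicit. First, both equalities in your final display use upper semi-continuity of the entropy map $m\mapsto h_m(\sigma_Y)$ (for the first, u.s.c.\ of $h_m(\sigma_Y)+J(m)$; for the second, the identity $h_m(\sigma_Y)=\inf_{\phi\in C(Y)}\{P_{\sigma_Y}(\phi)-\int\phi\,dm\}$, which is \cite[Theorem 9.12]{Wtext} and requires u.s.c.); this holds here because subshifts are expansive, but you never say so, and the argument would not go through on an arbitrary topological system. Second, your identification $\int P(\sigma_X,\pi,f-h\circ\pi)\,dm=\inf_n(1/n)\int\log g_n\,dm$ for non-ergodic $m$ can be obtained more directly than by ergodic decomposition: the $L^1$-convergence part of Kingman's theorem (or \cite[Proposition A.1]{FH}) gives $\lim_n(1/n)\int\log g_n\,dm=\int\lim_n(1/n)\log g_n\,dm$ for every invariant $m$, and Fekete's lemma identifies the left side with the infimum; if you do insist on the ergodic decomposition, the interchange of limit and integral over the decomposition needs a word of justification (e.g.\ monotonicity of $(1/2^k)\int\log g_{2^k}\,dm_\omega$ or the cited proposition). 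With these remarks supplied, your proof is complete and matches the standard one in \cite{Wa}.
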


\begin{lemma}\label{sm1}
Let $(X, \sigma_X)$ and $(Y, \sigma_Y)$ be subshifts and $\pi:X\rightarrow Y$ be a one-block factor map. Given $f\in C(X)$,  the following statement for $h\in C(Y)$ is equivalent to the equivalent statements in Lemma \ref{sm}.
\begin{enumerate}[label=(\roman*)]
\item %There exists $h\in C(Y)$ such that 
$\int P(\sigma_X, \pi, f) dm=\int h dm$ for each $m\in M(Y, \sigma_Y)$.\\
\label{i1}
\end{enumerate}
 \end{lemma}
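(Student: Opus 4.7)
The plan is to prove that condition (i) of Lemma \ref{sm1} is equivalent to condition (ii) of Lemma \ref{sm}, namely $\int P(\sigma_X, \pi, f-h\circ\pi)dm=0$ for every $m \in M(Y, \sigma_Y)$. Once this single equivalence is established, the chain in Lemma \ref{sm} supplies the rest.

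The central observation is that for any $\mu \in M(X, \sigma_X)$ with $\pi\mu = m$, we have $\int h\circ\pi\, d\mu = \int h\, dm$, so $h\circ\pi$ behaves like a constant relative to the fibres. I would apply the Relativised Variational Principle (Theorem \ref{relativeff}) to the continuous function $f - h\circ\pi \in C(X)$:
\begin{equation*}
\int P(\sigma_X, \pi, f - h\circ\pi)\, dm = \sup\Bigl\{h_{\mu}(\sigma_X) - h_m(\sigma_Y) + \int (f - h\circ\pi)\, d\mu : \mu \in M(X,\sigma_X),\, \pi\mu = m\Bigr\}.
\end{equation*}
Because $\int h\circ\pi\, d\mu = \int h\, dm$ is independent of the choice of $\mu$ in the fibre over $m$, this constant can be pulled outside the supremum, yielding
\begin{equation*}
\int P(\sigma_X, \pi, f - h\circ\pi)\, dm = \int P(\sigma_X, \pi, f)\, dm - \int h\, dm.
\end{equation*}
From this identity the equivalence of (i) with the condition $\int P(\sigma_X, \pi, f - h\circ\pi)\, dm = 0$ for each $m$ is immediate.

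An alternative route bypassing Theorem \ref{relativeff} is to work directly at the level of the defining sums: since $\pi \circ \sigma_X = \sigma_Y \circ \pi$, for any $x \in \pi^{-1}(\{y\})$ we have $S_n(h\circ\pi)(x) = S_n h(y)$, which depends only on $y$. Therefore $P_n(\sigma_X, \pi, f-h\circ\pi, \delta)(y) = e^{-S_n h(y)} P_n(\sigma_X, \pi, f, \delta)(y)$, and integrating against $m$ after taking $(1/n)\log$ and passing to the appropriate limits gives the same identity. This second route requires slightly more care with the $\limsup$ and $\delta\to 0$, so the first (variational) approach is cleaner. I anticipate no substantive obstacle here since all the work is already done in Theorem \ref{relativeff}; the only point requiring attention is the routine verification that $h\circ\pi$ really does come out of the supremum as a constant on each fibre, which is immediate from $\pi\mu = m$.
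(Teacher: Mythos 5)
Your argument is correct, and it differs from the paper's proof in one of the two directions. For the implication from \ref{i1} to the existence of a compensation function, the paper does essentially what you do: it substitutes $\int P(\sigma_X,\pi,f)\circ\pi\,d\mu=\int h\circ\pi\,d\mu$ into the identity (\ref{eqrel}), which is itself a consequence of Theorem \ref{relativeff}. For the converse, however, the paper does not use the variational principle at all: starting from Lemma \ref{sm} \ref{sm4} it invokes the $\mu$-a.e.\ representation (\ref{pesh}) of $P(\sigma_X,\pi,f-h\circ\pi)$ by the subadditive sequence $\{\log(g_n/e^{S_nh})\}_{n=1}^{\infty}$ with $g_n$ as in (\ref{beq}), and then applies Kingman's subadditive ergodic theorem to conclude $\int P(\sigma_X,\pi,f-h\circ\pi)\,dm=\lim_n\frac1n\int\log g_n\,dm-\int h\,dm=0$, hence \ref{i1}. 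Your route instead derives both directions at once from the single identity
\begin{equation*}
\int P(\sigma_X,\pi,f-h\circ\pi)\,dm=\int P(\sigma_X,\pi,f)\,dm-\int h\,dm,
\end{equation*}
obtained by pulling the fibre-constant $\int h\,dm$ out of the supremum in Theorem \ref{relativeff}; this is legitimate because $\int h\circ\pi\,d\mu=\int h\,dm$ for every $\mu$ with $\pi\mu=m$, and because every $m\in M(Y,\sigma_Y)$ lifts to some $\mu\in M(X,\sigma_X)$ ($\pi$ is surjective), so the supremum is over a nonempty set and all quantities are finite --- a point worth stating explicitly, though the paper relies on the same fact in Proposition \ref{key1}. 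What each approach buys: yours is shorter, stays entirely within the continuous/variational framework, and avoids the measurability and almost-everywhere subtleties of the Borel function $P(\sigma_X,\pi,f-h\circ\pi)$; the paper's detour through $\{\log g_n\}_{n=1}^{\infty}$ and the subadditive ergodic theorem sets up exactly the machinery (Lemma \ref{use} and the sequence $\G$) that is reused immediately afterwards in Theorem \ref{kohiyo}, which is presumably why that route was chosen.
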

\begin{proof}
 Suppose that  Lemma \ref{sm1} \ref{i1} holds. Then (\ref{eqrel}) implies that $f-h\circ \pi$ is a compensation function for $\pi$. %The part  $\ref{i2} \implies \ref{i3} \implies \ref{sm4}$ follows from  \cite[Theorem 3.3]{Wa}. 
 Suppose that  Lemma \ref{sm} \ref{sm4} holds.
 Then (\ref{pesh}) implies that  for $m$-almost everywhere
 \[
P(\sigma_X, \pi, f-h\circ\pi)(y)=
\limsup_{n\rightarrow\infty} \frac{1}{n} \log \Big(\frac{g_n(y)}{e^{(S_nh)(y)}}\Big),
\]%\label{equality}
where $g_n(y)$  is defined as in (\ref{beq}).
%Observe that for any $y\in [y_1, \dots, y_n]$, we have
%\begin{equation}\label{Gn}
%\frac{1}{M^{h}_n}G_n(y)\leq \frac{g_n(y)}{e^{(S_nh)(y)}}\leq M^{h}_nG_n(y).
%\end{equation}
 Since $\{\log g_n\}_{n=1}^{\infty}$ is subadditive, 
$\{\log (g_n/e^{S_nh})\}_{n=1}^{\infty}$ is subadditive. 
 Applying 
 the subadditive ergodic theorem, 
  %Noting that %$\G=\{\log g_n\}_{n=1}^{\infty}$ satisfies \ref{a0} and hence 
 %$\{\log g_ne^C\}_{n=1}^{\infty}$ is subadditive, 
 we obtain for each $m\in M(Y, \sigma_Y)$
\[ 
\begin{split} 
\int P(\sigma_X, \pi, f-h\circ \pi) dm 
 &=\int \limsup _{n\rightarrow\infty} \frac{1}{n} \log \Big (\frac{g_n}{e^{S_nh}} \Big )dm 
=\lim _{n\rightarrow\infty} \frac{1}{n}  \int  \log \Big (\frac{g_n}{e^{S_nh}}\Big )dm\\
&= \lim _{n\rightarrow\infty}  \frac{1}{n} \int \log {g_n}dm -\int h dm=0.
\end{split} 
\]
%where in the third and forth equalities, the subadditive ergodic theory is applied.
Hence we obtain Lemma \ref{sm1} \ref{i1}.
 \end {proof}
% Applying \cite[Proposition 1.4]{Wa}, we obtain the following lemma.
\begin{lemma} \label{use} %\cite[Proposition 1.4]{Wa}
Let $m\in M(Y, \sigma_Y)$. Then  
\[
P(\sigma_X, \pi, f-h\circ\pi)(y)=%\limsup _{n\rightarrow\infty} \frac{1}{n} \log \Big (\frac{g_n (y)}{e^{(S_nh) (y)}} \Big)\\
%&=\liminf _{n\rightarrow\infty}\frac{1}{n} \log \Big (\frac{g_n (y)}{e^{(S_n h) (y)}} \Big)\\
\lim_{n\rightarrow\infty} \frac{1}{n} \log \Big (\frac{g_n (y)}{e^{(S_n h) (y)}} \Big)
%\end{split}
\]
for $m$-almost everywhere on $Y$. 
 \end{lemma}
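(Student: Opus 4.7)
The plan is to upgrade the limsup identity implicit in the proof of Lemma~\ref{sm1}, namely
\[
P(\sigma_X, \pi, f - h \circ \pi)(y) = \limsup_{n\to\infty} \frac{1}{n} \log \Big(\frac{g_n(y)}{e^{(S_nh)(y)}}\Big) \qquad m\text{-a.e.},
\]
to a genuine limit by invoking Kingman's subadditive ergodic theorem.

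First I would re-derive this limsup identity carefully. Let $\tilde g_n(y) := \sup_{E_n(y)} \sum_{x \in E_n(y)} e^{(S_n(f-h\circ\pi))(x)}$, so that (\ref{pesh}) applied to the continuous potential $f - h\circ\pi \in C(X)$ gives $P(\sigma_X, \pi, f - h\circ\pi)(y) = \limsup_n \frac{1}{n}\log \tilde g_n(y)$ $m$-a.e. Since $\pi$ is a one-block factor map, every $x \in E_n(y)$ satisfies $(\pi x)_i = y_i$ for $1 \leq i \leq n$; by uniform continuity of $h$ on the compact space $Y$, the $n$th variation $V_n(h) := \sup\{|h(z)-h(z')| : z_i = z'_i,\ 1\leq i\leq n\}$ tends to $0$, so
\[
|S_n(h\circ \pi)(x) - (S_nh)(y)| \leq \sum_{j=1}^n V_j(h) = o(n)
\]
uniformly in $y \in Y$ and $x \in E_n(y)$. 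Factoring this error out of every summand gives $\tilde g_n(y) = e^{o(n)} g_n(y)/e^{(S_nh)(y)}$ uniformly in $y$, which yields the displayed limsup identity.

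Second, I would observe that $\{\log(g_n/e^{S_nh})\}_{n=1}^\infty$ is subadditive on $Y$: Lemma~\ref{new} provides subadditivity of $\{\log g_n\}_{n=1}^\infty$ and $\{S_nh\}_{n=1}^\infty$ is additive, so
\[
\log \frac{g_{n+m}(y)}{e^{(S_{n+m}h)(y)}} \leq \log \frac{g_n(y)}{e^{(S_nh)(y)}} + \log \frac{g_m(\sigma_Y^n y)}{e^{(S_mh)(\sigma_Y^n y)}}.
\]
Applying Kingman's subadditive ergodic theorem to this sequence of continuous (hence $m$-integrable) functions—first for ergodic $m$, then extending to arbitrary $m \in M(Y,\sigma_Y)$ via the ergodic decomposition, exactly as in the proof of Proposition~\ref{main11}—produces $m$-a.e.\ existence of $\lim_n \frac{1}{n}\log(g_n(y)/e^{(S_nh)(y)})$. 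Since this limit must agree with the already-established limsup, it equals $P(\sigma_X,\pi,f-h\circ\pi)(y)$ $m$-a.e.

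The main technical obstacle is the uniform-continuity bookkeeping in the first step, needed because $S_n(h\circ\pi)(x)$ depends on the whole tail of $\pi x$ rather than only on $y_1\dots y_n$; once the limsup identity is in hand, subadditivity together with Kingman's theorem finishes the proof immediately.
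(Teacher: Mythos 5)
Your proof is correct and follows essentially the same route as the paper: the paper's one-line proof also applies Kingman's subadditive ergodic theorem to the subadditive sequence $\{\log(g_n/e^{S_nh})\}_{n=1}^{\infty}$ and combines it with the $m$-a.e.\ limsup identity obtained from (\ref{pesh}) applied to $f-h\circ\pi$, exactly as in the proof of Lemma \ref{sm1}. The only difference is that you make explicit the $o(n)$ comparison between $S_n(h\circ\pi)(x)$ and $(S_nh)(y)$ for $x\in E_n(y)$, a bookkeeping step the paper leaves implicit (and note that the subadditivity of $\{\log g_n\}_{n=1}^{\infty}$ you quote from Lemma \ref{new} does not actually need the weak specification hypothesis, so the argument works for general subshifts as the lemma requires).
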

 \begin{proof}
 The result follows by the subadditive ergodic theorem.
\end{proof}

The main result of this section is the next theorem which relates the existence of a continuous compensation function for a factor map with the asymptotically additive property of the sequences $\G=\{\log g_n\}_{n=1}^{\infty}$. 
Given $f\in C(X)$, we continue to use $g_n$ as defined in equation (\ref{beq}). %In the next two theorems, We will answer the question addressed at the beginning of the section.

\begin{thm}\label{kohiyo}
Let $(X,\sigma_X)$ be an irreducible shift of finite type and $(Y,\sigma_Y)$
be a subshift. Let $\pi: X\rightarrow Y$ be a one-block factor map and  $f\in C(X)$. Then the following statements are equivalent for $h\in C(Y)$.

\begin{enumerate}[label=(\roman*)]

\item %There exists $h\in C(Y)$ such that 
$P(\sigma_X, \pi, f-h\circ\pi)(y)=0$ 
for every periodic point $y\in Y$, equivalently, 
$$\lim_{n\rightarrow\infty} \frac{1}{n} \log \Big (\frac{g_n (y)}{e^{(S_n h) (y)}} \Big)=0$$ for every periodic point $y \in Y$.
\label{equi11}

\item %There exists $h\in C(Y)$ such that 
The function $f-h\circ\pi$ is a compensation function for $\pi$. \label{equi1}

\item %There exists $h\in C(Y)$ such that 
$$\lim_{n\rightarrow\infty} \frac{1}{n} \log \Big (\frac{g_n (y)}{e^{(S_n h) (y)}} \Big)=0$$
%$P(\sigma, \pi, f-h\circ\pi)(y)=0$ 
for every $y\in Y$. \label{equi2}

\item The sequence $\G=\{\log g_n\}_{n=1}^{\infty}$ is asymptotically additive on $Y$
satisfying 
$$\lim_{n\rightarrow \infty} \frac{1}{n} \lVert \log 
\Big (\frac{g_n}{e^{(S_n h)}}\Big) \rVert_{\infty}=0.$$
 \label{equi3}

\item 
%There exists $h\in C(Y)$ such that 
$\int P(\sigma_X, \pi, f)dm=\int hdm$ for all $m\in M(Y, \sigma_Y)$.
 \label{wwd}

\end{enumerate}

\end{thm}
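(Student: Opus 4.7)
The plan is to derive all five equivalences from a single application of Theorem~\ref{mainaa} to the sequence $\G=\{\log g_n\}_{n=1}^{\infty}$ on $Y$, supplemented by Lemmas~\ref{sm1} and~\ref{use}. Lemma~\ref{new} supplies \ref{a0} with bounded (hence tempered) variation for $\G$, so the only outstanding hypothesis of Theorem~\ref{mainaa} is the periodic-point/specification property Lemma~\ref{maina}\ref{ms2} for $\G$. I will verify this property first, then read off the equivalences from Theorem~\ref{mainaa}.

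\medskip

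To establish Lemma~\ref{maina}\ref{ms2} for $\G$, let $p$ be a weak specification number of the irreducible one-step shift of finite type $(X,\sigma_X)$. Given $u\in B_n(Y)$, lift $u$ to a preimage $x_1\cdots x_n\in B_n(X)$ and use weak specification in $X$ to obtain $\tilde w\in B_q(X)$, $0\le q\le p$, with $\tilde z:=(x_1\cdots x_n\tilde w)^{\infty}\in X$; set $w:=\pi(\tilde w)$ and $z:=\pi(\tilde z)=(uw)^{\infty}\in Y$. To lower-bound $g_{j(n+q)}(z)$, restrict the defining sum to preimage cylinders of the form $[x^{(1)}\tilde w^{(1)}\cdots x^{(j)}\tilde w^{(j)}]$ with $\pi(x^{(i)})=u$, $\pi(\tilde w^{(i)})=w$, and the concatenation allowable in $X$. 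Choosing representatives block-wise and using the additivity of $S_{j(n+q)}f$, the tempered variation of $\{e^{S_nf}\}$ (coming from uniform continuity of $f$ on compact $X$), and the uniform floor $\overline{m}=\min_{0\le l\le p}\{e^{(S_lf)(x)}:x\in X\}$ on the $j$ filler contributions, one obtains
\[
g_{j(n+q)}(z)\ge\bigl(M_n\sup\{g_n(y):y\in[u]\}\bigr)^j
\]
with $M_n$ controlled purely by the bounded-variation constant of $\G$, the tempered-variation constants of $\{e^{S_nf}\}$, and $\overline m$, so that $(1/n)\log M_n\to 0$.

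\medskip

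With Theorem~\ref{mainaa} now applicable to $\G$, statements \ref{11} and \ref{14} of that theorem coincide verbatim with \ref{equi3} and \ref{equi2} respectively. For \ref{13}$\Leftrightarrow$\ref{wwd}, Kingman's subadditive ergodic theorem applied to $\{\log g_n\}$, together with~(\ref{pesh}) and the ergodic decomposition, identifies $\lim_n (1/n)\int\log g_n\,d\mu$ with $\int P(\sigma_X,\pi,f)\,d\mu$ for every $\mu\in M(Y,\sigma_Y)$. For \ref{12}$\Leftrightarrow$\ref{equi11}, the second formulation of \ref{equi11} is exactly \ref{12} of Theorem~\ref{mainaa}; the two formulations of \ref{equi11} match via Lemma~\ref{use} applied to the normalized orbit measure $\mu_y$ of a periodic $y$, since $\mu_y$ is ergodic, invariant and charges every point of the finite orbit of $y$, forcing the $\mu_y$-a.e.\ identity provided by Lemma~\ref{use} to hold at $y$ itself. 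Finally, \ref{equi1}$\Leftrightarrow$\ref{wwd} is Lemma~\ref{sm1}, closing the cycle.

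\medskip

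The principal obstacle is the verification of Lemma~\ref{maina}\ref{ms2} for $\G$. The delicate point is that the lower bound on $g_{j(n+q)}$ must be multiplicative across all $j$ consecutive blocks using a single fixed filler word $w\in B_q(Y)$; this forces us to restrict to concatenations that respect the one-step SFT constraints of $X$ at every junction while using the fixed projected filler $w$ throughout. The one-block nature of $\pi$ and the bounded variation of both $\G$ and $\{e^{S_nf}\}$ then suffice to absorb the per-junction losses into a subexponential constant $M_n$, which is what the hypothesis of Theorem~\ref{mainaa} demands.
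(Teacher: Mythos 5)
Your overall architecture is sound and close to the paper's: Lemma~\ref{new} gives \ref{a0} with bounded variation for $\G$, the equivalences among \ref{equi11}, \ref{equi2}, \ref{equi3}, \ref{wwd} are read off from Theorem~\ref{mainaa}/Lemma~\ref{maina} once the periodic-point property Lemma~\ref{maina}\ref{ms2} is verified for $\G$, and \ref{equi1} is tied in through Lemmas~\ref{sm} and~\ref{sm1}. The problem is that the one step you correctly identify as the crux --- verifying Lemma~\ref{maina}\ref{ms2} for $\G$ --- is not actually proved. You assert that, after fixing one lift $x_1\cdots x_n$ of $u$ and the single filler $w=\pi(\tilde w)$ closing that lift, the sum defining $g_{j(n+q)}(z)$ restricted to allowable concatenations $x^{(1)}\tilde w^{(1)}\cdots x^{(j)}\tilde w^{(j)}$ with $\pi(x^{(i)})=u$, $\pi(\tilde w^{(i)})=w$ is bounded below by $\bigl(M_n\sup\{g_n(y):y\in[u]\}\bigr)^j$ with $M_n$ subexponential, the losses being ``absorbed'' by bounded variation and the one-block property. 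This does not follow. The junction constraints of the one-step SFT $X$ are combinatorial, not a matter of variation: an arbitrary preimage of $u$ need not be continuable, through \emph{any} lift of the fixed word $w$, into an arbitrary other preimage of $u$, because their terminal and initial symbols may not match any admissible lift of $w$. Consequently the restricted sum does not factor as (sum over all preimages of $u$)$^j$; it behaves like a matrix product over preimages, and if the preimage carrying essentially all of the mass of $g_n(y)$ cannot be concatenated with itself through a lift of $w$, the restricted sum can be exponentially smaller in $n$ per block than $\sup_{[u]}g_n$, so your $M_n$ would not satisfy $(1/n)\log M_n\to 0$.

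The paper closes exactly this gap with a pigeonhole device that your sketch omits: writing $L$ for the number of symbols of $X$, it groups the preimage words of $y_1\dots y_n$ according to their first and last symbols $a_i,b_j$, and picks a pair $(i_0,j_0)$ whose subfamily $W_{i_0,j_0}$ carries at least a $\frac{1}{L^2M_n}$ fraction of $g_n(y)$ (here $M_n$ is the variation constant of $e^{S_nf}$). Since all words of $W_{i_0,j_0}$ begin with $a_{i_0}$ and end with $b_{j_0}$, a \emph{single} connecting word $w$ with $b_{j_0}wa_{i_0}$ allowable works at every junction, every concatenation of words from $W_{i_0,j_0}$ separated by $w$ is allowable, and the sum over these concatenations genuinely factors, yielding $g_{j(n+q)}(y^{*})\geq \bigl(\tfrac{e^{m}}{L^{2}M_n^{2}}\sup\{g_n(z):z\in[y_1\dots y_n]\}\bigr)^{j}$, which is subexponential in $n$ as required. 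Without this selection of boundary symbols (or some equivalent argument), your claimed lower bound is unsupported, so the proof as written has a genuine gap precisely at its central step; the remaining equivalences (Kingman plus (\ref{pesh}) for \ref{wwd}, Lemma~\ref{use} on periodic-orbit measures for the two formulations of \ref{equi11}, Lemma~\ref{sm1} for \ref{equi1}) are fine and match the paper.
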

\begin{rem}\label{comments}
(1) %If $f=0$, then the continuous function $-h\circ\pi$ in \ref{equi1} is a saturated compensation function for $\pi$.  See Section \ref{shinsan} for the result 
%Note that if $X,Y$ are one-sided irreducible shifts of finte type the condition is equivalent to Theorem \ref{kohiyo} \ref{equi11} with $f=0$.
 Theorem \ref{kohiyo} \ref{equi11} with $f=0$ is equivalent to the condition found by Shin \cite[Theorem 3.5]{Sh3} for the existence of a saturated compensation function between two-sided irreducible shifts of finite type (see Section \ref{apli}).  Hence by \cite[Theorem 3.5]{Sh3} Theorem \ref{kohiyo}  \ref{equi11}, \ref{equi1} and \ref{wwd} are equivalent when $f=0$ for a factor map between two-sided irreducible shifts of finite type.
%If $f=0$, then the continuous function $-h\circ\pi$ in \ref{equi1} is a saturated compensation function for $\pi$. Shin \cite[Theorem 3.5]{Sh3} studied an equivalent condition for the existence of a saturated compensation function (see Section \ref{apli}). 
By the result of Cuneo \cite[Theorem 2.1]{Cu}, if $\G$ is asymptotically additive then \ref{wwd} holds for some $h\in C(Y)$. %\ref{equi3} implies \ref{wwd} by Cuneo \cite[Theorem 2.1]{Cu}. 
(2) See Section  \ref{apli} for some examples and properties of $h$. 

\end{rem}
\begin{proof}
It is clear that \ref{equi3} implies \ref{equi2}.  By Lemma \ref{sm}, \ref{equi2} implies  \ref{equi1} and \ref{equi1} implies \ref{equi11}. Now we show that \ref{equi11} implies \ref{equi3}.
%Suppose there exists $h\in C(Y)$ such that $f-h\circ\pi$ is a compensation function for $\pi$. Then by Theorem \ref{sm} and Lemma \ref{use}, 
Suppose that \ref{equi11} holds. %by Lemma \ref{use} 
%\begin{equation}\label{pp}
%\lim_{n\rightarrow\infty} \frac{1}{n} \log \Big (\frac{g_n (y)}{e^{(S_n h) (y)}} \Big)=0,
%\end{equation}
%for every periodic point $y$ in Y, 
It is enough to show that Lemma \ref{maina} \ref{ms2} holds. %We will show that (\ref{pp}) holds for every $y \in Y$. 
%Let $S$ be a set of finitely many symbols and $L$ be the cardinality of the set $S$. 
Let $X$ be an irreducible shift of finite type on a set $S$ of finitely many symbols and $k$ be a weak specification number of $X$.  Let $L$ be the cardinality of the set $S$.
 Let $y=(y_1, y_2,\dots, y_{n},\dots )\in Y$. For a fixed $n\geq 3$, let $y_1=a$, $y_{n}=b$. Then $\pi^{-1}(y_1)=\{a_1, \dots, a_{L_1}\}$, where $a_i\in S$ for $1\leq i\leq L_1$, for some $L_1\leq L$, and $\pi^{-1}(y_{n})=\{b_1, \dots, b_{L_2}\}$ where $b_j\in S$ for $1\leq j\leq L_2$, for some $L_2\leq L$.
Define $W_{ij}:=\{a_{i}x_2\dots x_{n-1}b_{j}\in B_{n}(X): \pi(a_{i}x_2\dots x_{n-1}b_{j})=y_1\dots y_{n}\}$. Let $E^{i,j}_{n}(y)$ be a set 
consisting of exactly one point from each cylinder set $[u]$ of length $n$ of $X$, where $u\in W_{ij}$. Define $C_{i,j}:=\sum_{x\in E^{i,j}_{n}(y)} e^{(S_{n}f)(x)}$ and $M_{n}:=\sup\{e^{(S_nf)(x)}/e^{(S_nf)(y)}: x_i=y_i, 1\leq i\leq n\}$. If $W_{i,j}=\emptyset$, then define $C_{i,j}:=0$. Then 
\begin{equation*}
g_{n}(y)\geq \sum_{1\leq i\leq L_1, 1\leq j\leq L_2} C_{i,j}\geq \frac{1}{M_{n}}g_{n}(y),
\end{equation*}
where in the second equality, we use the fact that for any $E_n(y)$
\begin{equation*}   
\frac{g_n(y)}{M_n} \leq  \sum_{x\in E_{n}(y)} e^{(S_{n}f)(x)}.% \leq  M_ng_n(y).
\end{equation*}
Hence there exist $i_0, j_0$ such that 
\begin{equation}\label{kk}
 C_{i_0,j_0}\geq \frac{1}{L_1L_2 M_{n}}g_{n}(y)\geq \frac{1}{L^2 M_{n}}g_{n}(y).
\end{equation}
Note that $ (i_0, j_0)$ depends on $n$.
There exists an allowable word $w=w_1\dots w_{q}$ of length $q$ in $X$, $0\leq q \leq k$ such that $b_{j_0}wa_{i_0}$ is an allowable word of $X$.
Take an allowable word $a_{i_0}x_2\dots x_{n-1}b_{j_0}\in W_{i_0, j_0}$. Since $X$ is an irreducible shift of finite type,  we obtain a periodic point $\tilde{x}:=(a_{i_0},x_2,\dots, x_{n-1},b_{j_0}, w_1,\dots, w_{q})^{\infty}\in X$. Let $\pi(w_i)=d_i$ for each $i=1, \dots, q$. Let $y^{*}:=\pi(\tilde{x})$. Then
$y^{*}=(y_1,\dots, y_{n},d_1,\dots d_q)^{\infty}$ is a periodic point of $\sigma_Y$.

%\begin{claim}\label{clm}
%Let $i,j$ be fixed. For $r=1,2,$ let $P_r(i,j)$ be a set consisting of exactly each point from each cylinder set [u], $u\in W_{i,j}$. Then
 %\begin{equation}
 %\frac{\sum_{x\in P_2(i,j)} e^{(S_{n-k}f)(x)}}{M_{n-k}}\leq  \sum_{x\in P_1(i,j)} e^{(S_{n-k}f)(x)}\leq M_{n-k}\sum_{x\in P_2(i,j)} e^{(S_{n-k}f)(x)}
 %\end{equation}
%\end{claim}
 %\begin{proof}
 %Since $f$ has tempered variation, 
 %$$\frac{e^{(S_{n-k}f)(x)}}{e^{(S_{n-k}f)(x')}}\leq M_{n-k},$$
 %for any $x, x' \in X$, $x_i={x'}_{i}, 1\leq i\leq n-k$, where $\lim_{n\rightarrow \infty}({1}/{n})\log M_n=0$. This implies the result.
 %5\end{proof}
 
 For a fixed $n\geq 3$, define $P_0:=E^{i_0,j_0}_{n}(y)$. Define $P_1$ % define a set $P_1$ 
 by \begin{equation*}
 P_1=\{z=(z_i)_{i=1}^{\infty}\in X: z_{1}\dots z_{n} \in W_{i_0, j_0},  z_{n+1}\dots z_{n+q}=w, \sigma^{n+q} z=z \}.
\end{equation*}
 Observe that if $z\in P_1$, then $\pi(z)=y^{*}$ and $P_1$ is a set consisting of
 exactly one point from each cylinder $[u]$ of length $(n+q)$ of $X$ such that $\pi(u)= y_1\dots y_{n}d_1\dots d_q$  satisfying $u_1 \dots u_n \in W_{i_0, j_0}$ and  $u_{n+1}\dots u_{n+q}=w$.
 Then 
\begin{equation*}
 g_{n+q} (y^*)=\sup_{E_{n+q}(y^*)}\{\sum_{x\in E_{n+q}(y^*)}e^{(S_{n+q}f)(x)}\}
 \geq
 \sum_{x\in P_1}e^{(S_{n+q}f)(x)}
 \geq 
\frac{e^{m}}{M_n}( \sum_{x\in P_0}e^{(S_{n}f)(x)}),
\end{equation*}
where $m:= \min_{0\leq i\leq k}\{e^{(S_if)(x)}: x\in X\}$, $(S_0f)(x):=1$ for every $x\in X$. Next define  $P_2$ by
 \begin{equation*}
 \begin{split}
 P_2=&\{z=(z_i)_{i=1}^{\infty}\in X: \text{ for each } j=0,1, z_{j(n+q)+1}\dots z_{n(j+1)+jq} \in W_{i_0, j_0}.\\
 &  z_{(j+1)n+jq+1}\dots z_{(j+1)(n+q)}=w, \sigma^{2(n+q)} z=z \}.
 \end{split}
\end{equation*}
Observe that if $z\in P_2$, then $\pi(z)=y^{*}$ and $P_2$ is a set consisting of
 one point from each cylinder $[u]$ of length $(2n+2q)$ of $X$ such that $\pi(u)= y_1\dots y_{n}d_1\dots d_qy_1\dots y_{n}d_1\dots d_q$ satisfying $u_1\dots u_n, u_{n+q+1}\dots u_{2n+q}\in W_{i_0, j_0}$ and  $u_{n+1}\dots u_{n+q}=u_{2n+q+1}\dots u_{2n+2q}=w$.
 % For a fixed $n\in\N$, define $P=\{z=(z_i)_{i=1}^{\infty}\in X: z_{in+1}\dots z_{(i+1)n-k}\in W_{i_0, j_0},  z_{(i+1)n-k+1}\dots z_{(i+1)n}=w for each $i\in \N\cup \{0\}\}. 
Hence 
 \begin{equation*}
 g_{2(n+q)} (y^*)=\sup_{E_{2(n+q)}(y^*)}\{\sum_{x\in E_{2(n+q)}(y^*)}e^{(S_{2(n+q)}f)(x)}\}
 \geq
 \sum_{x\in P_2}e^{(S_{2(n+q)}f)(x)}
 \geq 
\frac{e^{2m}}{M^2_{n}}( \sum_{x\in P_0}e^{(S_{n}f)(x)})^2.
\end{equation*}
 Applying (\ref{kk}), % and Claim \ref{clm}, 
 we obtain 
 \begin{equation*}
 g_{2(n+q)} (y^*)\geq \frac{e^{2m}}{M^2_{n}}(\sum_{x\in P_0}e^{(S_{n}f)(x)})^2
 \geq \frac{e^{2m}g^2_{n}(y^*)}{L^4M^4_{n}}.
 \end{equation*}
 Similarly, for $j\geq 3$,
define the set $P_j$ of periodic points by
 \begin{equation*}
 \begin{split}
 P_j&=\{z=(z_i)_{i=1}^{\infty}\in X: \text{for each } 0\leq l \leq j-1, z_{l(n+q)+1}\dots z_{(l+1)n+lq}\in W_{i_0, j_0}, 
 \\
 &z_{(l+1)n+lq+1}\dots z_{(l+1)(n+q)}=w,\sigma^{j(n+q)} z=z \}. 
 \end{split}
\end{equation*}
 If $z\in P_j$, then $\pi(z)=y^{*}$ and $P_j$ is a set consisting of
 one point from each cylinder $[u]$ of length $j(n+q)$ such that $\pi(u)= (y_1\dots y_{n}d_1\dots d_q)^{j}$  satisfying $u_{l(n+q)+1}=a_{i_0}$, $u_{(l+1)n+lq}=b_{j_0}$ 
 and $u_{(l+1)n+lq+1}\dots u_{(l+1)(n+q)}=w$ for each $0\leq l\leq j-1$. Then we obtain
 \begin{equation*}\label{ij}
  g_{j(n+q)}(y^*)=\sup_{E_{j(n+q)}(y^*)}\{\sum_{x\in E_{j(n+q)}(y^*)}e^{(S_{j(n+q)}f)(x)}\}
 \geq
 \sum_{x\in P_j}e^{(S_{j(n+q)}f)(x)}
 \geq  \frac{e^{jm}}{M^j_{n}} ( \sum_{x\in P_0}e^{(S_{n}f)(x)})^j.
\end{equation*}
%We note that for each fixed $n\geq 3$ there exist $i_0$ and $j_0$ such that (\ref{kk}) holds. 
%Hence (\ref{ij}) holds for $n\geq 3$.   
 Applying (\ref{kk}), % and Claim \ref{clm}, 
 we obtain 
 \begin{equation*}
 g_{j(n+q)}(y^*)\geq \frac{e^{jm}}{M^j_{n}} (\sum_{x\in P_0}e^{(S_{n}f)(x)})^j
 \geq (\frac{e^{m}}{L^2{M^2_{n}}})^jg^j_{n}(y^*).
\end{equation*}
 Since the function $g_n$ is locally constant, for $n\geq 3$, % for each $n>k+2$, 
 %for each $z\in[y_1,\dots, y_{n}]$, 
\begin{equation*}
 g_{j(n+q)}(y^*)\geq (\frac{e^{m}}{L^2{M^2_{n}}})^j\sup\{g_n(z): z\in [y_1\dots y_n]\}^j
\end{equation*}
for every $j\in \N$. 
 Hence the condition \ref{ms2} in Lemma \ref{maina} holds.
Applying Lemma \ref{maina}, we obtain \ref{equi3}. Finally, \ref{equi3} $\implies$ \ref{wwd} is immediate and \ref{wwd} implies \ref{equi1} by Lemma \ref{sm1}. 
\end{proof}

 %\begin{coro}\label{wwd}
%There exists $h\in C(Y)$ such that 
%$\int P(\sigma_X, \pi, f)dm=\int hdm$ for all $m\in M(Y, \sigma_Y)$ if and only if
%the sequence $\G$ is asymptotically additive.
%\end{coro}
%\begin{proof}
%The results follow immediately by Theorem \ref{kohiyo} and
%Proposition \ref{sm}.
%\end{proof}
%\begin{rem}
%Let $(X, \sigma_X)$ and $(Y, \sigma_Y)$ be subshifts and $\pi:X\rightarrow Y$ be a one-block factor map. Let $\F=\{\log f_n\}_{n=1}^{\infty}$ be a subadditive sequence on $Y$. For such an $\F$, we obtain the relative pressure $P(\sigma_X, \pi, \F)$ and the relative variational principle has been studied (see \cite{ZC, Y25, Fe1}). In this case it is not clear that Theorem \ref{kohiyo} holds replacing $f$ by $\F$, because Proposition \ref{maina} \ref{ms2} may not hold for a general subadditive sequence satisfying \ref{a1}. %In this case studying properties at periodic points does not seem to be sufficient. 
%\end{rem}

%Under the assumption of Theorem \ref{kohiyo}, assume also that  $f\in C(X)$ is a function in the Bowen class.  Recall that $\G=\{\log g_n\}_{n=1}^{\infty}$ satisfies 
 %\ref{a0} and \ref{a1} if $f\in C(X)$ is a function in the Bowen class and hence $\G$ has a unique Gibbs equilibrium state $\mu$. Suppose that one of the equivalent conditions n Theorem \ref{kohiyo} is satisfied. Then $\mu$ is also an equilibrium state for a continuous function.
 
 %$m$ be the unique Gibbs equilibrium state for $P(\sigma_X, \pi, f)$.

 Recall that if $f\in C(X)$ is in  the Bowen class, then $\G=\{\log g_n\}_{n=1}^{\infty}$ satisfies 
 \ref{a0} and \ref{a1} and  $\G$ has the unique Gibbs equilibrium state.
%The sequence $\G$ satisfies Lemma \ref{maina} \ref{ms2}. 

\begin{coro}\label{chara1} 
Under the assumptions of Theorem \ref{kohiyo}, assume also that  $f\in C(X)$ is a function in the Bowen class and let $m$ be the unique Gibbs equilibrium state for $\G=\{\log g_n\}_{n=1}^{\infty}$. Suppose that one of the equivalent statements in Theorem \ref{kohiyo} holds. %and let $h$ be defined as in Theorem \ref{kohiyo}\ref{wwd}. 
Then

\begin{enumerate} [label=(\roman*)]
\item $m$ is an invariant weak Gibbs measure for $h$. \label{hweak}
\item The equation (\ref{fwg}) holds by replacing $f_n$ by $g_n$.
\item If $m$ is a Gibbs measure for a continuous function, then $\G$ is almost additive. 
\end{enumerate} 
Hence if there is no sequence $\{C_{n, m}\}_{n,m\in\N}$ satisfying (\ref{fwg}) by replacing $f_n$ by $g_n$, then there does not exist  a continuous function $h$ on $Y$ such that 
$$\int P(\sigma_X, \pi, f) d\mu=\int h d\mu$$ for every $\mu\in M(Y, \sigma_Y).$
 \end{coro}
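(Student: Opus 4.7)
The plan is to observe that under the hypotheses of Corollary \ref{chara1}, the sequence $\G=\{\log g_n\}_{n=1}^{\infty}$ on $Y$ satisfies the assumptions of Theorems \ref{wk=g} and \ref{cc}, so the corollary follows by applying those theorems directly to $\G$ (with $X$ in those theorems replaced by $Y$).

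First I would verify the hypotheses. Since $f\in C(X)$ belongs to the Bowen class, Remark \ref{pastover} gives that $\G$ satisfies \ref{a0} and \ref{a1} with bounded variation. By Theorem \ref{fengthem} there is a unique invariant Gibbs measure for $\G$, which is $m$ by assumption. Because one of the equivalent statements of Theorem \ref{kohiyo} holds, statement \ref{equi3} of that theorem provides a function $h\in C(Y)$ such that $\G$ is asymptotically additive on $Y$ with $\lim_{n\to\infty}(1/n)\lVert\log(g_n/e^{S_nh})\rVert_{\infty}=0$.

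Next I would deduce the three numbered statements. For (i), I apply Theorem \ref{wk=g}: since $\G$ is asymptotically additive, statement \ref{wka} of Theorem \ref{wk=g} holds for $\G$, so $m$ is an invariant weak Gibbs measure for some continuous function on $Y$. Tracing through the proof of Lemma \ref{simplel} (in particular the inequality (\ref{property}) obtained from (\ref{forg})), the continuous function is precisely the $h$ supplied by Theorem \ref{kohiyo} \ref{equi3}, up to the additive constant $P(\G)-P(h)=0$, which is zero by Proposition \ref{main11} applied to the identity $\int\lim_{n}(1/n)\log g_n\,d\mu=\int h\,d\mu$ for all $\mu\in M(Y,\sigma_Y)$. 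For (ii), apply Theorem \ref{cc} \ref{estim} to $\G$: since the equivalent statements of Theorem \ref{wk=g} hold for $\G$, the bound (\ref{fwg}) holds with $f_n$ replaced by $g_n$. For (iii), apply Theorem \ref{cc} \ref{qb} to $\G$: if $m$ is a Gibbs measure for a continuous function, then $\G$ is almost additive.

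Finally, the ``Hence'' assertion follows by contrapositive using (ii) and the equivalences of Theorem \ref{kohiyo}: if there existed $h\in C(Y)$ with $\int P(\sigma_X,\pi,f)\,d\mu=\int h\,d\mu$ for every $\mu\in M(Y,\sigma_Y)$, then Theorem \ref{kohiyo} \ref{wwd} would hold, so by (ii) the bound (\ref{fwg}) with $f_n$ replaced by $g_n$ would be satisfied for some sequence $\{C_{n,m}\}$. Since there is no real obstacle here beyond keeping track of how the continuous function in the weak Gibbs property of Lemma \ref{simplel} relates to the $h$ from Theorem \ref{kohiyo}, the argument is essentially a straightforward transcription of Theorems \ref{wk=g} and \ref{cc} into the present context.
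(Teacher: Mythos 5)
Your proposal is correct and follows essentially the same route as the paper: the paper likewise deduces (i) from the uniform convergence in Theorem \ref{kohiyo} \ref{equi3} together with the first part of the proof of Lemma \ref{simplel} (using $P(\G)=P(h)$), obtains (ii) and (iii) by the arguments of Theorem \ref{cc} applied to $\G$, and gets the final assertion from the equivalences in Theorem \ref{kohiyo}. Your explicit check that $\G$ satisfies \ref{a0}, \ref{a1} with bounded variation via Remark \ref{pastover} and that $m$ is the unique Gibbs state via Theorem \ref{fengthem} is exactly the implicit groundwork the paper relies on.
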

 \begin{proof}
 %We recall that if $f\in C(X)$ is the Bowen class, $\G=\{\log g_n\}_{n=1}^{\infty}$ satisfies 
 %\ref{a0} and \ref{a1} and hence  $\G$ has the unique Gibbs equilibrium state.
 Since 
 %$f-h\circ\pi$ is a compensation function,  
%$P(\sigma_X, \pi, f-h\circ \pi)(y)=0$ for every periodic point $y \in Y$. By the proof of Theorem \ref{kohiyo}, 
$\lim_{n\rightarrow\infty}(1/n)\vert\vert \log (g_n/e^{S_nh})\vert\vert_{\infty}=0$,  applying the first part of the proof of Lemma \ref{simplel}, $m$ is an invariant weak Gibbs measure for $h$.  To show the second statement, we use similar arguments as in the proof of Theorem \ref{cc}.
To show the third statement, we apply the proof of Theorem \ref{cc}. The last statement is obvious by Theorem \ref{kohiyo}.
 %The results are immediate by using Theorem \ref{cc} and Theorem \ref{kohiyo}.   
  \end {proof}
 \begin{rem} Applying Theorem \ref{maina}, we can study Theorem \ref{kohiyo} under a more general setting. Let $(X,\sigma_X), (Y,\sigma_Y)$ be subshifts and $\pi: X\rightarrow Y$ be a one-block factor map. Given a function $f\in C(X)$, suppose that $\G=\{\log g_n\}_{n=1}^{\infty}$ satisfies 
Theorem \ref{mainas} \ref{mss2}. Then Theorem \ref{kohiyo} holds. It would be interesting to study the conditions on factor maps $\pi$ satisfying Theorem \ref{mainas} \ref{mss2} for $\G$. %We also note that Theorem \ref{kohiyo} characterizes the existence of a compensation function $f\in C(X)$ by setting $h=0$. 
\end{rem}

%Let $\mu$ be the unique Gibbs equilibrium sate for a  sequence $\F$ on a subshift $(X, \sigma_X)$ satisfying \ref{a0} and \ref{a1} with bounded variation.
%If there exists $f\in C(X)$ for which $\mu$ is a Gibbs measure, then $\F$ is an almost additive sequence on $X$.
%If there exists $f\in C(X)$ for which $\mu$ is a weak Gibbs measure, the there exits a sequence $\{C_{n, m}\}_{n,m\in\N}$ such that

%\begin{equation} \label{cc}
%\frac{1}{C_{n,m}}\leq \frac{f_{n+m}(x)}{f_n(x) f_{m}(\sigma^n_X x)}\leq C_{n,m}\
%\end {equation}
%where $\lim_{n\rightarrow \infty}\frac{1}{n}\log C_{n,m}= \lim_{m\rightarrow \infty}\frac{1}{m}\log C_{n,m}=0.$
%Hence does not exists such a sequence $\{C_{n, m}\}_{n,m\in\N}$, then there does not exists $h\in C(X)$ such that 
%$$\lim_{n\rightarrow\infty} \frac{1}{n}\int f_n d\mu=\int h d\mu$$ for every $\mu\in M(X, \sigma_X)$ 

\section{Applications}\label{apli}
In this section, we give some examples and  applications. Applying the results from the previous sections, we study  the existence of a saturated compensation function for a factor map  between subshifts and factors of weak Gibbs measures for continuous functions.

\subsection{Existence of continuous saturated compensation functions}\label{shinsan}
Let $(X,\sigma_X)$ be an irreducible shift of finite type, $Y$ be a subshift and $\pi: X\rightarrow Y$ be a one-block factor map. For $n\in \N$, let  $\phi_n$ be the continuous function on $Y$ obtained by setting  $f=0$ in equation (\ref{beq}). Set $\Phi=\{\log \phi_n\}_{n=1}^{\infty}$.  

For a factor map $\pi$ between subshifts, there always exists a Borel measurable saturated compensation function $-P(\sigma_X, \pi, 0)\circ \pi$ given by a superadditive sequence $-\Phi\circ\pi$, however,  a continuous saturated compensation function does not always exist.  
Shin  \cite{Sh3} considered a one-block factor map $\pi: X\rightarrow Y$ between two-sided irreducible shifts of finite type and gave an equivalent condition for the existence of a saturated compensation function (see Theorem 3.5 in \cite{Sh3} for details). Note that  %if $X,Y$ are one-sided irreducible shifts of finte type 
the condition is equivalent to Theorem \ref{kohiyo} \ref{equi11} with $f=0$.

Here we characterize the existence of a saturated compensation function in terms of the type of the sequence $\Phi$ by applying Theorem \ref{kohiyo}.

\begin{coro}\label{A1}
Let $(X,\sigma_X)$ be an irreducible shift of finite type, $Y$ be a subshift and $\pi: X\rightarrow Y$ be a one-block factor map.
Then $-h\circ\pi, h\in C(Y)$ is a saturated compensation function if and only if one of the equivalent statements in Theorem \ref{kohiyo} holds with $f=0$. In particular, a saturated compensation function exists if and only if $\Phi$ is asymptotically additive on $Y$. 
If  $-h\circ\pi$ is a compensation function, then $h$ has the unique equilibrium state and it is a weak Gibbs measure for $h$. If there does not exist $\{C_{n,m}\}_{(n,m)\in \N \times \N}$ satisfying equation (\ref{fwg}) for $\Phi$, then there 
exists no continuous saturated compensation function for $\pi$.
\end{coro}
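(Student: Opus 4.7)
The plan is to deduce every part of the corollary as a direct specialization to $f=0$ of Theorem \ref{kohiyo} and Corollary \ref{chara1}, together with the definition of saturated compensation function and Cuneo's Theorem \ref{cuneo}. There is essentially no new computation to do; the task is to check that the hypotheses transfer and to assemble the pieces in the right order.

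First I would handle the two equivalences. By the very definition of a saturated compensation function, $-h\circ\pi$ with $h\in C(Y)$ is a saturated compensation function precisely when $-h\circ\pi$ is a compensation function for $\pi$; setting $f=0$ in Theorem \ref{kohiyo} \ref{equi1} yields the first assertion. For the characterization via asymptotic additivity, the forward direction is Theorem \ref{kohiyo} \ref{equi3} with $f=0$. For the converse, if $\Phi$ is asymptotically additive, Theorem \ref{cuneo} produces some $h\in C(Y)$ with $\lim_{n\to\infty}(1/n)\lVert \log\phi_n - S_n h\rVert_\infty = 0$, which is condition \ref{equi3} of Theorem \ref{kohiyo}, so $-h\circ\pi$ is a saturated compensation function.

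Next I would establish the uniqueness and weak Gibbs statement. Since the zero function on $X$ trivially lies in the Bowen class, Remark \ref{pastover} ensures that $\Phi$ satisfies \ref{a0} and \ref{a1} with bounded variation. Theorem \ref{fengthem} then gives a unique invariant Gibbs measure $m$ for $\Phi$, which is also its unique equilibrium state. Corollary \ref{chara1} \ref{hweak} applied with $f=0$ identifies $m$ as an invariant weak Gibbs measure for $h$. To upgrade $m$ to the unique equilibrium state for $h$, I would invoke the fact that asymptotic additivity gives
\begin{equation*}
\lim_{n\to\infty}\frac{1}{n}\int\log\phi_n\,d\mu = \int h\,d\mu \qquad \text{for every } \mu\in M(Y,\sigma_Y),
\end{equation*}
so the variational problems for $\Phi$ and $h$ share the same pressure and the same set of maximizers; thus the unique equilibrium state $m$ of $\Phi$ is also the unique equilibrium state of $h$.

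Finally, the nonexistence statement is the direct contrapositive of Corollary \ref{chara1}: if no sequence $\{C_{n,m}\}$ satisfying (\ref{fwg}) exists for $\Phi$, then the equivalent conditions of Theorem \ref{kohiyo} with $f=0$ must all fail, and by the first equivalence of the corollary no continuous saturated compensation function for $\pi$ can exist. The main point requiring any care at all is the trivial verification that $f=0$ lies in the Bowen class, so that Remark \ref{pastover} applies and Corollary \ref{chara1} is in force; otherwise the argument is a pure bookkeeping exercise in previously established results.
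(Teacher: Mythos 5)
Your proposal is correct and follows essentially the same route as the paper, whose proof is simply to set $f=0$ in Theorem \ref{kohiyo} and Corollary \ref{chara1}; your additional details (the trivial check that $f=0$ lies in the Bowen class, the use of Theorem \ref{cuneo} for the converse direction, and the transfer of the unique equilibrium state from $\Phi$ to $h$) are exactly what the paper leaves implicit, via Remarks \ref{pastover}, \ref{comments} and \ref{imp1}.
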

%\begin{rem}
%Since the subadditive sequence $\Phi$ on $Y$ has a unique  Gibbs equilibrium state (see Remark \ref{pastover}), it is also a weak Gibbs equilibrium state for $-G$ if and only if $\Phi$ is asymptotically additive on $Y$.
%\end{rem}
\begin{proof}
The result follows by setting $f=0$ in Theorem \ref{kohiyo} and Corollary \ref{chara1}.
\end{proof}
%\begin{coro}
%Under the assumptions of Corollary \ref{A1}, let $\Phi$ and $G$ be defined as in Corollary \ref{A1}.  The unique  Gibbs equilibrium state $m$ for the subadditive sequence $\Phi$ on $Y$ is also a unique weak Gibbs equilibrium state for $-G$ if and only if $\Phi$ is asymptotically additive on $Y$. %(see Theorem \ref{weakgibbs} for only if part).
%\end{coro}
%\begin{proof}
%We note that there is a unique Gibbs equilibrium state for $\Phi$ (see Remark \ref{pastover}). To show only if part, we apply Theorem \ref{weakgibbs} in Section \ref{factg}
%\end{proof}
%We obtain Theorem \ref{shinT} if we set $f=0$ in Theorem \ref{kohiyo}.
%\end{rem} 
%Now we  study an example that illustrates XXX. 
\begin{ex}\label{exshin} \cite{Sh3} \textbf {A sequence satisfying \ref{a0} and \ref{a1} which is not asymptotically additive.}
Shin \cite[Example 3.1]{Sh3} gave an example of a factor map $\pi: X\rightarrow Y$ between two sided irreducible shifts of finite type $X,Y$ without a saturated compensation function. We note that the  same results hold for one-sided subshifts. % because the proof of \cite[Corollary 3.3]{Sh3} holds for one-sided subshifts. %We note that in \cite[Example 3.1]{Sh3}
%$X$ and $Y$ are two sided topologically mixing shifts of finite type, and the same results hold for one-sided subshifts because the proof of \cite[Corollary 3.3]{Sh3} holds for one-sided subshifts. 
The sequence $\Phi=\{\log \phi_n\}_{n=1}^{\infty}$ is a subadditive sequence satisfying \ref{a0} and \ref{a1} with bounded variation and there exists a unique Gibbs equilibrium state $\nu$ for $\Phi$. % (see also \cite{Y2}).   
Since there is no saturated compensation function, there does not exist a continuous function $h\in C(Y)$ such that 
$$\lim_{n\rightarrow\infty}\frac{1}{n}\int\log \phi_ndm=\int hdm$$
for every $m\in M(Y, \sigma_Y)$. Hence $\Phi$ is not an asymptotically additive sequence and there does not exist a continuous function on $Y$ for which $\nu$ is an invariant weak Gibbs measure (see Theorem \ref{weakgibbs}). 
Alternatively, a simple calculation shows that for any $x\in [12^m1]$ where $m\geq 3$ is odd,
$$\frac{\phi_{2+m}(x)}{\phi_{2}(x)\phi_{m}(\sigma^{2}x)}=\frac{\vert \pi^{-1}[12^m1]\vert}  {\vert\pi^{-1}[12]\vert \vert \pi^{-1}[2^{m-1}1]\vert}=\frac{1}{2^{\frac{m}{2}}+2}$$
(see \cite{Sh3}). 
Hence  for any sequence $\{C_{n,m}\}_{(n,m)\in \N \times \N}$ satisfying equation (\ref{fwg}) for $\Phi$, we obtain that $C_{2, m}\geq {2^{\frac{m}{2}}+2}$.
By Corollary \ref{A1}, there does not exist a continuous saturated compensation function.
\end{ex}
\begin{rem} \label{pflster}
 (1) Pfister and Sullivan \cite {PfS} studied a class of continuous functions satisfying bounded total oscillations on two-sided subshifts and showed that if a continuous function $f$ belongs to the class under a certain condition then an equilibrium state for $f$ is a weak Gibbs measure for some continuous function.
 Shin \cite[Proposition 3.5]{S1} gave an example of a saturated compensation function $G\circ \pi$ for a factor map $\pi: X\rightarrow Y$ between two-sided irreducible shifts of finite type $X,Y$ where $-G$ does not have bounded total oscillations. Let $(X^{+}, \sigma^{+}_{X})$ and  $(Y^{+}, \sigma^{+}_{Y})$ be the corresponding one-sided shifts of finite type and consider the factor map $\pi^{+}: X^{+}\rightarrow Y^{+}$.  Then 
 the corresponding saturated compensation function $G^{+}\circ \pi$ for $\pi^{+}, G^{+}\in C(Y^{+}),$  is obtained.
 %$G\vert_{Y^{+}}\circ\pi^{+}$ is a saturated compensation function for  $\pi^{+}$. 
 Applying Theorem \ref{kohiyo} and Corollary \ref{chara1}, $-G^{+}$ has a unique equilibrium state  and it is a weak Gibbs measure for $-G^{+}$.
 (2) See Section 2 in \cite{BCJP} for examples of measures which are not weak Gibbs studied in quantum physics.

 \end{rem}

\begin{ex}\label{yukithesis} \textbf {A sequence satisfying \ref{a0} and \ref{a1} which is also asymptotically additive.} 
In \cite{Y1}, saturated compensation functions were studied to find the Hausdorff dimensions of some compact invariant sets of expanding maps of the torus. In \cite[Example 5.1]{Y1}, given a  factor map $\pi$ between topologically mixing subshifts of finite type $X$ and $Y$, a saturated compensation function $G\circ \pi$, $G\in C(Y)$, was found and $-G$ has a unique equilibrium state $\nu$ which is not Gibbs. Applying Theorem \ref{kohiyo} and Corollary \ref{chara1}, $\nu$ is an invariant weak Gibbs measure for $-G$. % Recall that $\nu$ is also a unique Gibbs measure for $\Phi$. 
%Let $\mu$ be  the unique preimage measure of $\nu$. 
%It is easy to see that the measure $\mu$ on the symbolic representation of the  compact invariant set in \cite[Example 5.1]{Y1} which corresponds to the measure of full dimension of the  compact invariant set is a weak Gibbs measure for a continuous function.
%We also remark that we can also obtain that $G$ \cite[Example 5.1]{Y1}  has a weak Gibbs measure for $G-P(G)$ by applying\cite[Theorem 2.1]{PfS}.
%for the continuous function $\alpha G\circ \pi$, where $\alpha>0$ is specified in  \cite[Example 5.1]{Y1}.
 %We also note that Theorem \ref{kohiyo} implies that saturated compensation functions $G\circ \pi$ for 
%factor maps between topologically mixing subshifts of finite type found in \cite[Theorem 3.1]{Y1} and any constant multiple of $G\circ \pi$ have equilibrium states which are  weak Gibbs measures. 
\end{ex}
\begin{rem}\label{rem2}
(1) In \cite{Fe1, Y2}, the ergodic measures of full Hausdorff dimension for some compact invariant sets of certain expanding maps of the torus were identified with equilibrium states for sequences of continuous function. If a saturated compensation function exists, then they are the equilibrium states of a constant multiple of a saturated compensation \cite{Y1}. %Hence we could  study further the 
%question concerning properties of measures of full dimension.
(2) In Example \ref{yukithesis} \cite[Example 5.1]{Y1}, $X$ and $Y$ are one-sided subshifts of finite type.
Considering  the corresponding two-sided shifts of finite type $\hat X, \hat Y$ and the factor map $\hat \pi$ between them,  a saturated compensation function $\hat G\circ \pi$ for $\hat \pi$, $\hat G\in C(\hat Y)$, is obtained in the same manner as $G$ is obtained. %by $G$ defining $\hat G$ in the same way as $G$ is defined  in \cite[Example 5.1]{Y1} for $\pi$. 
The function $-\hat G$ on $\hat Y$ does not have bounded total oscillations (see Remark \ref{pflster} (1)). 
%is extended to a saturated compensation function $\hat G\circ \pi$ for a factor map $\pi: \hat X\rightarrow \hat Y$, $\hat G\in C(\hat Y)$. 
%If we extend them to the two-sided shifts of finite type $\hat X, \hat Y$,  then the saturated compensation function $G\circ \pi$ in \cite[Example 5.1]{Y1} for $\pi$  is extended to a saturated compensation function $\hat G\circ \pi$ for a factor map $\pi: \hat X\rightarrow \hat Y$, $\hat G\in C(\hat Y)$. The function $-\hat G$ on $\hat Y$ does not have bounded total oscillations. 
\end{rem}
\subsection{Factors of invariant weak Gibbs measures} \label{factg}
Factors of invariant Gibbs measures for continuous functions have been widely studied (see for example \cite{CU1,CU2, Fe1, K,Pi, Pi2, PK, V, Y2, Y3, Yo}). For a survey of the study of factors of Gibbs measures, see the paper by Boyle and Petersen \cite{BP}.
In this section, more generally, we study the properties of factors of invariant weak Gibbs measures. %and study general conditions that guarantee that  projected measures are weak Gibbs measures for some continuous functions. %weak Gibbs measures for some continuous functions.
Given a one-block factor map $\pi: X\rightarrow Y$, %$y=(y_1,\dots y_n,\dots)\in Y$, $n\in \N$, let $E_n(y)$ be a set consisting of exactly one point from 
%each cylinder $[x_1\dots x_n]$ such that $\pi(x_1\dots x_n)=y_1\dots y_n$.
and $f\in C(X)$,
define $g_n$ for each $n\in\N$ as in (\ref{beq}) and $\G=\{\log g_n\}_{n=1}^{\infty} $ on $Y$.

%Now we apply the results from the previous sections to study the factor of an invariant weak Gibbs measure for a continuous function.  We will see that if $\mu$ is a weak Gibbs equilibrium state for $f\in C(X)$, then $\pi\mu$ is a weak Gibbs equilibrium state for a sequence $\G$ with tempered variation satisfying \ref{a0} and the condition \ref{maina2} in Proposition \ref{maina}. Hence we can characterize the existence of a function $h\in C(X)$ such that $\lim_{n\rightarrow \infty}(1/n)\int \log g_n   d\mu=\int hd\mu$ for all $\mu\in M(Y, \sigma_Y)$. Cuneo \cite{Cu} shows that  if sequence $\G$ is asymptotically additive, then such $h$ exists. 

\begin{thm}\label{general}
Let $(X,\sigma_X)$ be an irreducible shift of finite type, $Y$ be a subshift and $\pi: X\rightarrow Y$ be a one-block factor map.
Suppose there exists $\mu$ such that $\mu$ is an invariant weak Gibbs measure for $f\in C(X)$. Then $\pi\mu$ is an invariant weak Gibbs measure for $\G=\{\log g_n\}_{n=1}^{\infty}$ on $Y$.
There exists $h\in C(Y)$ such that $\lim_{n\rightarrow \infty}(1/{n})\int\log g_n dm =\int h dm$ for all $m\in M(Y,\sigma_Y)$ if and only if one of the equivalent statements in Theorem \ref{kohiyo} \ref{equi11}-\ref{equi3} holds. %$\G$ is asymptotically additive.
Moreover, such a function $h$ exists if and only if  the invariant measure $\pi\mu$ is a weak Gibbs measure for a continuous function on $Y$. \end{thm}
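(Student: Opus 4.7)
The plan is to dispatch the three assertions of Theorem \ref{general} in order. First I would establish that $\pi\mu$ is weak Gibbs for $\G$; this is the main computation. Fixing $y=(y_i)\in Y$, I decompose
\[
(\pi\mu)[y_1\dots y_n] = \sum_{\pi(x_1\dots x_n)=y_1\dots y_n} \mu[x_1\dots x_n]
\]
and apply the weak Gibbs estimate for $\mu$ with constants $C_n$. The resulting sum of $e^{-nP(f)+(S_nf)(x)}$ over a choice of points $x \in E_n(y)$ differs from $e^{-nP(f)}g_n(y)$ by at most a factor coming from the variation $M_n^f$ of $e^{S_n f}$ on $n$-cylinders. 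A crucial observation is that the weak Gibbs property of $\mu$ itself forces tempered variation of $S_n f$: comparing the two-sided weak Gibbs bounds at two distinct points of the same cylinder yields $M_n^f \leq C_n^2$, so $(1/n)\log M_n^f \to 0$. Combining this with $P(\G) = P(f)$ --- which follows from the subadditive variational principle, the a.e.\ identification (\ref{pesh}), and the Ledrappier--Walters relativised variational principle (Theorem \ref{relativeff}) --- produces new subexponential constants $\tilde C_n$ witnessing the weak Gibbs property of $\pi\mu$ with respect to $\G$.

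For the second assertion, I would note that Kingman's subadditive ergodic theorem applied to $\G$, combined with (\ref{pesh}), gives $\lim_n (1/n)\int \log g_n\, dm = \int P(\sigma_X,\pi,f)\, dm$ for every $m \in M(Y,\sigma_Y)$. Hence the existence of $h \in C(Y)$ satisfying $\lim_n (1/n)\int \log g_n\, dm = \int h\, dm$ for every $m$ is precisely statement \ref{wwd} of Theorem \ref{kohiyo}, which that theorem shows equivalent to any of \ref{equi11}--\ref{equi3}. The third assertion also reduces to Theorem \ref{kohiyo} once the weak Gibbs estimate for $\pi\mu$ with respect to $\G$ is at hand. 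For the forward direction, if Theorem \ref{kohiyo} \ref{equi3} holds, the uniform convergence $(1/n)\|\log(g_n/e^{S_n h})\|_\infty \to 0$ allows me to replace $g_n(y)$ by $e^{(S_nh)(y)}$ in the weak Gibbs bounds at the cost of a factor growing only subexponentially; since $P(\G) = P(h)$ by the subadditive variational principle, $\pi\mu$ becomes a weak Gibbs measure for $h \in C(Y)$. For the reverse direction, if $\pi\mu$ is also weak Gibbs for some $\tilde h \in C(Y)$ with constants $C''_n$, dividing the two weak Gibbs estimates of $\pi\mu$ cancels $(\pi\mu)[y_1\dots y_n]$ and yields
\[
\frac{1}{n} \bigl\| \log g_n - S_n\bigl(\tilde h + P(\G) - P(\tilde h)\bigr) \bigr\|_\infty \longrightarrow 0,
\]
so $\G$ is asymptotically additive with test function $h := \tilde h + P(\G) - P(\tilde h) \in C(Y)$, which gives Theorem \ref{kohiyo} \ref{equi3} and hence the desired $h$ in \ref{wwd}.

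The main obstacle is the first step: cleanly extracting the weak Gibbs property of $\pi\mu$ for $\G$ from that of $\mu$ for $f$. The subtlety lies in controlling the comparison between $\sum_{x \in E_n(y)} e^{(S_nf)(x)}$ and $g_n(y)$ by subexponential factors; this works only because the weak Gibbs hypothesis on $\mu$ automatically delivers the tempered-variation bound $M_n^f \leq C_n^2$ with no further regularity required on $f$. Once this bound and the pressure identity $P(\G) = P(f)$ are secured, the remaining two assertions are structural reductions to Theorem \ref{kohiyo} via the identification of $\lim_n (1/n)\int \log g_n\, dm$ with $\int P(\sigma_X,\pi,f)\, dm$ and a routine manipulation of the two weak Gibbs estimates for $\pi\mu$.
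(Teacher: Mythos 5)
Your proposal is correct and follows essentially the same route as the paper: the same cylinder decomposition of $\pi\mu[y_1\dots y_n]$ with the weak Gibbs bounds for $\mu$ and a tempered-variation factor $M_n$, the identity $P(f)=P(\G)$, the reduction of the second assertion to Theorem \ref{kohiyo} via Kingman's theorem and (\ref{pesh}), and the Lemma \ref{simplel}-style manipulation (replacing $g_n$ by $e^{S_nh}$, respectively dividing the two weak Gibbs estimates to get $h=\tilde h+P(\G)-P(\tilde h)$) for the last equivalence. The only deviations are cosmetic: you extract tempered variation of $S_nf$ from the weak Gibbs bounds ($M_n\leq C_n^2$) where the paper simply uses that any $f\in C(X)$ has tempered variation, and you obtain $P(\G)=P(f)$ via the relativised variational principle rather than directly from the definition of pressure.
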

\begin {rem}
(1) If $f$ is in the Bowen class, then there is a unique Gibbs equilibrium state for $\G$ and Corollary \ref{chara1} also applies. (2) If there exists $\mu$ such that $\mu$ is an invariant weak Gibbs measure for $f\in C(X)$, then $\pi\mu$ is an equilibrium state for $\G$.
\end {rem}
\begin{proof}
To prove the first statement, we apply the similar arguments as in the proof of \cite[Theorem 3.7]{Y3} and we outline the proof. Suppose that $f\in C(X)$ has an invariant weak Gibbs measure $\mu$. Then there exists $C _n>0$ such that 
\begin{equation*}
\frac{1}{C_n} \leq \frac{\mu [x_1\dots x_n]}{e^{-nP(f)+(S_nf)(x)}} \leq {C_n}
\end{equation*}
for each $x\in  [x_1\dots x_n]$, where $\lim_{n\rightarrow\infty}(1/n)\log C_n=0$.
Since $f$ has tempered variation, if we let 
$$M_n=\sup\{\frac{e^{(S_nf)(x)}}{e^{(S_nf)(y)}}:x,y  \in X, x_i=y_i \textnormal{ for } 1 \leq i \leq n\},$$
then $\lim_{n\rightarrow\infty}(1/n)\log M_n=0$.
By using the definition of the topological pressure, we obtain that 
$P(f)=P(\G)$. 
Since $$\pi\mu[y_1\dots y_n]=\sum_{\substack{x_1\dots x_n\in B_n(X)\\ \pi(x_1\dots x_n)=y_1\dots y_n}}\mu[x_1\dots x_n],$$ using the similar arguments as in the proof of  \cite[Theorem 3.7 ]{Y3}, we obtain 
\begin{equation*}
\frac{1}{C_nM_n} \leq \frac{\pi\mu [y_1\dots y_n]}{e^{-nP(\G)}g_n(y)} \leq {C_nM_n}.
\end{equation*}
Hence $\pi\mu$ is an invariant weak Gibbs measure for $\G$. %and it is an equilibrium state for $\G$. %APPLY PREVIOUS THEOREM.
%The last statement of the theorem is obvious because $P(\G)=P(h)$ by the variational principle of the topological pressure AND MORE. 
The second statement holds by Theorem \ref{kohiyo}.
% and Corollary
%because  $P(\sigma_X,\pi,f)=\limsup_{n\rightarrow \infty}(1/n)\log g_n$ for almost everywhere with respect to every invariant Borel probability measure.
%Suppose 
%Suppose there exists $h\in C(X)$ such that  $\int h d\mu=\lim_{n\rightarrow \infty}(1/{n})\int\log g_n d\mu$ for all $\mu\in M(Y,\sigma_Y)$. Since  $P(\sigma_X,\pi,f)=\limsup_{n\rightarrow \infty}(1/n)\log g_n$ for almost everywhere with respect to every invariant Borel probability measure, Theorem \ref{kohiyo} and Corollary imply the reverse implication.%that $\G$ is asymptotically additive and   
Now we show the last statement. Suppose such $h$ exists. Modifying slightly the proof of Corollary \ref{chara1} \ref{hweak} taking into account the fact that  $\pi\mu$ is a weak Gibbs measure for $\G$, we obtain that $\pi\mu$ is a weak Gibbs measure for $h$. To see the reverse implication, suppose $\pi\mu$ is weak Gibbs for some $\tilde {h}$. Then there exists $A_n>0$ such that 
\begin{equation}\label{laste}
\frac{1}{A_n} \leq \frac{\pi\mu [y_1\dots y_n]}{e^{-nP(\tilde h)+(S_n\tilde h)(y)}} \leq {A_n}
\end{equation}
for each $y\in [y_1\cdots y_n]$, where  $\lim_{n\rightarrow\infty}(1/n)\log A_n=0$.
If we let $K_n=C_nM_n$, then the similar arguments as in the latter part of the proof of Lemma \ref{simplel} show that 
$$\frac{1}{K_nA_n}\leq \frac{g_n(y)}{e^{(S_n(\tilde{h}-P(\tilde{h})+P(\G)))(y)}}\leq K_nA_n.$$ for each $y\in [y_1\cdots y_n]$. Hence
$\G$ is asymptotically additive. Set $h=\tilde h-P(\tilde{h})+P(\G).$%Applying Theorem \ref{kohiyo}, we obtain the result.
\end{proof}
The proof of Theorem \ref{general} gives us the following result. 
\begin{thm}\label{weakgibbs}
Under the assumptions of Theorem \ref{kohiyo}, suppose there exists $\mu$ such that $\mu$  is an invariant weak Gibbs measure for $f\in C(X)$.  Then there exists $h\in C(Y)$  satisfying the equivalent statements in Theorem \ref{kohiyo} %\ref{equi11}-\ref{equi3} 
if and only if there exists a continuous function on $Y$ for which $\pi\mu$ is an invariant weak Gibbs measure on $Y$.
%the following statement is also equivalent to the equivalent statements in Theorem \ref{kohiyo}.
%\begin{enumerate}[label=(\roman*)]
%\item There exists a continuous function on $Y$ for which $\pi\mu$ is an invariant weak Gibbs measure on $Y$.
%\end{enumerate}
\end{thm}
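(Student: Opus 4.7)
The plan is to deduce Theorem \ref{weakgibbs} essentially as a corollary of Theorem \ref{general}, combined with the chain of equivalences from Theorem \ref{kohiyo}. Under the standing assumptions of Theorem \ref{kohiyo}, I would first recall from Theorem \ref{general} that as soon as $\mu$ is an invariant weak Gibbs measure for $f\in C(X)$, the push-forward $\pi\mu$ is automatically an invariant weak Gibbs measure for the subadditive sequence $\G=\{\log g_n\}_{n=1}^{\infty}$ on $Y$. So the question reduces to identifying when $\pi\mu$ is weak Gibbs not just for $\G$ but for a single continuous potential.

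Next I would align the integral condition appearing in Theorem \ref{general} with statement \ref{wwd} of Theorem \ref{kohiyo}. Using the representation $P(\sigma_X,\pi,f)(y)=\limsup_{n\to\infty}(1/n)\log g_n(y)$ almost everywhere (equation (\ref{pesh})) and the subadditive ergodic theorem applied as in Lemma \ref{use}, one has
\[
\int P(\sigma_X,\pi,f)\,dm \;=\; \lim_{n\to\infty}\frac{1}{n}\int \log g_n\,dm
\]
for every $m\in M(Y,\sigma_Y)$. Therefore the existence of $h\in C(Y)$ with $\lim_{n\to\infty}(1/n)\int\log g_n\,dm=\int h\,dm$ for every such $m$ is exactly statement \ref{wwd} of Theorem \ref{kohiyo}, which by Theorem \ref{kohiyo} is equivalent to statements \ref{equi11}--\ref{equi3}.

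Combining these two observations, I would argue both directions as follows. Forward: if $h\in C(Y)$ satisfies (one of) the equivalent conditions in Theorem \ref{kohiyo}, then in particular \ref{wwd} holds, so the last statement of Theorem \ref{general} yields that $\pi\mu$ is weak Gibbs for a continuous function on $Y$ (in fact for $h$ itself, as in the proof of Corollary \ref{chara1} \ref{hweak}). Reverse: if $\pi\mu$ is weak Gibbs for some $\tilde h\in C(Y)$, then the last part of the proof of Theorem \ref{general} produces a continuous function $h=\tilde h-P(\tilde h)+P(\G)$ for which the integral identity holds; this is condition \ref{wwd} of Theorem \ref{kohiyo}, hence all equivalent statements of Theorem \ref{kohiyo} hold for this $h$.

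There is no serious obstacle here, since Theorem \ref{general} and Theorem \ref{kohiyo} together perform all of the work; the main care needed is bookkeeping the equivalence of the two formulations of the integral condition (through (\ref{pesh}) and the subadditive ergodic theorem) so that statement \ref{wwd} of Theorem \ref{kohiyo} can be invoked, and verifying that the continuous potential produced in the reverse direction is the same function that satisfies the equivalent conditions of Theorem \ref{kohiyo}.
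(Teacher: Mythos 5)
Your proposal is correct and follows essentially the paper's own route: the paper proves Theorem \ref{weakgibbs} by observing that the proof of Theorem \ref{general} (whose last part establishes exactly the equivalence with \ref{wwd}, resp.\ \ref{equi3}, of Theorem \ref{kohiyo}, via (\ref{pesh}) and the subadditive ergodic theorem) already yields both implications. Your bookkeeping of the integral condition and of the potential $h=\tilde h-P(\tilde h)+P(\G)$ matches what is done there, so no gap remains.
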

 
\begin{coro}\label{cc1}
Under the assumptions of Theorem \ref{general}, if there is no sequence $\{C_{n,m}\}_{n, m\in\N}$ satisfying 
the equation (\ref{fwg}) by replacing $f_n$ by $g_n$, then 
there does not exist a continuous function $h$  on $Y$ such that 
$\lim_{n\rightarrow \infty} (1/{n})\int \log g_n dm=\int h dm$ for every $m\in M(Y, \sigma_Y)$. 
Hence there exists no continuous function on $Y$ for which $\pi\mu$ is an invariant  weak Gibbs measure on $Y$.
 \end{coro}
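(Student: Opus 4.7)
The plan is to prove the contrapositive: assuming such an $h\in C(Y)$ does exist, I will construct a sequence $\{C_{n,m}\}$ satisfying the analogue of (\ref{fwg}) for $g_n$, and then read off the final conclusion from the last equivalence in Theorem \ref{general}.

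First I would invoke Theorem \ref{general} twice. From the hypothesis that $\mu$ is an invariant weak Gibbs measure for $f \in C(X)$, the first part of Theorem \ref{general} gives that $\pi\mu$ is an invariant weak Gibbs measure for $\G = \{\log g_n\}_{n=1}^{\infty}$: there exist $K_n > 0$ with $\lim_{n\to\infty}(1/n)\log K_n = 0$ and
\[
\frac{1}{K_n} \leq \frac{\pi\mu[y_1\dots y_n]}{e^{-nP(\G)} g_n(y)} \leq K_n
\]
for all $y \in [y_1\dots y_n]$. Next, if $h \in C(Y)$ satisfies $\lim_{n\to\infty}(1/n)\int \log g_n\, dm = \int h\, dm$ for every $m \in M(Y,\sigma_Y)$, then by the last statement of Theorem \ref{general} (equivalently, by Theorem \ref{weakgibbs}) the projected measure $\pi\mu$ is an invariant weak Gibbs measure for $h$, and $P(h) = P(\G)$. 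Thus there exist $A_n > 0$ with $\lim_{n\to\infty}(1/n)\log A_n = 0$ and
\[
\frac{1}{A_n} \leq \frac{\pi\mu[y_1\dots y_n]}{e^{-nP(\G) + (S_n h)(y)}} \leq A_n.
\]

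Combining these two double inequalities eliminates $\pi\mu[y_1\dots y_n]$ and yields
\[
\frac{1}{A_n K_n} \leq \frac{g_n(y)}{e^{(S_n h)(y)}} \leq A_n K_n
\]
for every $y \in [y_1\dots y_n]$. Now I would apply this to indices $n$, $m$ and $n+m$, using the exact additivity $(S_{n+m} h)(y) = (S_n h)(y) + (S_m h)(\sigma_Y^n y)$ of the Birkhoff sums of the continuous function $h$. Dividing the estimate for $n+m$ by the estimates for $n$ and $m$ gives
\[
\frac{1}{A_{n+m}K_{n+m} A_n K_n A_m K_m} \leq \frac{g_{n+m}(y)}{g_n(y)\, g_m(\sigma_Y^n y)} \leq A_{n+m} K_{n+m} A_n K_n A_m K_m.
\]
Setting $C_{n,m} := A_{n+m} K_{n+m} A_n K_n A_m K_m$ yields the required estimate. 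The straightforward verification (the only mildly delicate step) that $\lim_{n\to\infty}(1/n)\log C_{n,m} = 0$ and $\lim_{m\to\infty}(1/m)\log C_{n,m} = 0$ follows because each of the six factors contributes a term of the form $(1/n)\log A_{\ast}$ or $(1/n)\log K_{\ast}$ with $\ast \in \{n,m,n+m\}$, and all of these go to zero as $n \to \infty$ with $m$ fixed (and symmetrically for $m \to \infty$), using $\lim_{n\to\infty}(1/n)\log A_n = \lim_{n\to\infty}(1/n)\log K_n = 0$.

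This proves the contrapositive of the first assertion, so the nonexistence of $\{C_{n,m}\}$ satisfying (\ref{fwg}) for $g_n$ forces the nonexistence of such an $h$. The final sentence is immediate from Theorem \ref{weakgibbs}: the existence of a continuous function on $Y$ for which $\pi\mu$ is an invariant weak Gibbs measure would supply such an $h$ (via the equivalence with Theorem \ref{kohiyo} \ref{wwd}), contradicting what we just proved. I expect the main obstacle to be purely bookkeeping in the limit verification for $C_{n,m}$; the conceptual content is entirely packaged in Theorem \ref{general} and the additivity of $S_n h$.
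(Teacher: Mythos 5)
Your proposal is correct and follows essentially the same route as the paper: assume such an $h$ exists, use Theorem \ref{general} (and Theorem \ref{weakgibbs}) to get the weak Gibbs estimates for $\pi\mu$ with respect to both $\G$ (constants $K_n$) and $h$ (constants $A_n$), eliminate $\pi\mu$, and use $P(h)=P(\G)$ together with additivity of $S_nh$ to produce $C_{n,m}=K_{n+m}A_{n+m}K_nA_nK_mA_m$, which is exactly the paper's choice. The concluding appeal to Theorem \ref{weakgibbs} for the final sentence also matches the paper.
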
 
\begin{proof}
Suppose there exists $h\in C(Y)$ such that $\lim_{n\rightarrow \infty} (1/{n})\int \log g_n dm=\int h dm$ for every $m\in M(Y, \sigma_Y)$. By Theorem \ref{general}, $\G$ is asymptotically additive and $\pi\mu$ is an invariant weak Gibbs measure for $h$.  Hence %by the proof of Theorem \ref{general}, 
there exists $A_n>0$ such that  (\ref{laste}) holds for $h$
%\begin{equation*}
%\frac{1}{A_n} \leq \frac{\pi\mu [y_1\dots y_n]}{e^{-nP(\tilde h)+(S_n \tilde h)(y)}} \leq {A_n}
%\end{equation*}
for each $y\in [y_1\cdots y_n]$, where  $\lim_{n\rightarrow\infty}(1/n)\log A_n=0$. 
Let $K_n$ defined as in the proof of Theorem \ref{general}. Using $P(h)=P(\G)$ and additivity of $\{S_n h\}_{n=1}^{\infty}$, we obtain 
$$\frac{1}{K_{n+m}A_{n+m}K_nA_nK_mA_m}\leq \frac{g_{n+m}(y)}{g_n(y) g_{m}(\sigma^n_Y y)}\leq K_{n+m}A_{n+m}K_nA_nK_mA_m.$$
Define $C_{n,m}:= K_{n+m}A_{n+m}K_nA_nK_mA_m$ for each $n,m \in\N$. Then 
$\lim_{n\rightarrow \infty}(1/n)\log C_{n,m}=\lim_{m\rightarrow \infty}(1/m)\log C_{n,m}=0$.
Hence the result follows from Theorem \ref{general}.
\end{proof}

% \begin{coro}
%Under the assumptions of Theorem \ref{general}, suppose that $f\in C(X)$ belongs to the Bowen class. 
%Then there exists a continuous function on $Y$ for which $\pi\mu$ is an invariant weak Gibbs measure if and only if one of the equivalent statements in Theorem \ref{kohiyo} holds. 

 %If $\pi\mu$ is a Gibbs measure for a continuous function, then $\G$ is an almost additive sequence.\end{coro}\label{generalG}
%\begin{proof} 
% Theorem \ref{general} implies the first statement. %The second statement is also clear.
% Corollary  \ref{chara1} implies the second statement.
 
 %\end{proof}

%The following corollary similar to Corollary \ref{chara1} is easily obtained.% immediately. %on the form of the sequence $\{\log g_n\}_{n=1}^{\infty}$.

%\begin{coro}\label{cc1}
%Under the assumptions of Theorem \ref{general}, %if $\pi\mu$ is a Gibbs measure for $h\in C(Y)$, then 
%$\{\log g_n\}_{n=1}^{\infty}$ is an almost additive sequence. I
%if there is no sequence $\{C_{n,m}\}_{n, m\in\N}$ satisfying 
%the equation (\ref{fwg}) by replacing $f_n$ by $g_n$, then 
%there does not exist $h\in C(Y)$ such that 
%$\lim_{n\rightarrow \infty} (1/{n})\int \log g_n d\mu=\int h d\mu$ for every $\mu\in M(Y, \sigma_Y)$.  \end{coro} 

%\begin{proof}
%It is known in \cite{Fe, Y} that if $f\in C(X)$ has bounded variation, then $\G$ satisfies \ref{a1} and \ref{a4} and $\pi\mu$ is s a unique Gibbs equilibrium state for $\G$. The rest of the theorem follows from Proposition \ref{general}.
%\end{proof}

%OR NONCOMPACT CASE and compensation function

{\it Acknowledgements.}
{The author was partly supported by CONICYT PIA ACT172001 and by  
196108 GI/C at the Universidad del B\'{i}o-B\'{i}o.}
%This research was carried out while GI 172208/C  at Universidad del B\'{i}o-B\'{i}0} was in ejction
%\end{thanks}

\end{document}